\newtheorem{theorem}{Theorem}[section] 
\newtheorem{lemma}{Lemma}[section] 
\newtheorem{proposition}{Proposition}[section] 
\newtheorem{corollary}{Corollary}[section] 
\newtheorem{remark}{Remark}[section] 
\newtheorem{eg}{Example}[section] 
\newtheorem{claim}{Claim}
\numberwithin{equation}{section}
\newlength{\dhatheight}
\newcommand{\caret}{\mathbin{\scriptscriptstyle\wedge}}
\newcommand{\kay}{\ensuremath{k}}
\renewcommand{\H}{\ensuremath{\mathsf{H}}}
\renewcommand{\L}{\ensuremath{\mathsf{L}}}
\newcommand{\M}{\ensuremath{\mathsf{M}}}
\newcommand{\N}{\ensuremath{\mathsf{N}}}
\newcommand{\E}{\ensuremath{\mathsf{E}}}
\newcommand{\F}{\ensuremath{\mathsf{F}}}
\newcommand{\G}{\ensuremath{\mathsf{G}}}
\newcommand{\I}{\ensuremath{\mathsf{I}}}
\newcommand{\J}{\ensuremath{\mathsf{J}}}
\newcommand{\K}{\ensuremath{\mathsf{K}}}
\newcommand{\U}{\ensuremath{\mathsf{U}}}
\newcommand{\V}{\ensuremath{\mathsf{V}}}
\newcommand{\A}{\ensuremath{\mathsf{A}}}
\newcommand{\C}{\ensuremath{\mathsf{C}}}
\renewcommand{\mod}{\ensuremath{\, \, \mathrm{mod} \,}}
\newcommand{\sgn}{\ensuremath{\epsilon}}
\newcommand{\discr}{\ensuremath{\mathrm{discr}}}
\newcommand{\card}{\ensuremath{\#}}
\newcommand{\rad}{\ensuremath{\mathrm{rad}}}
\title[]{Quadratic Irrationals, Generating Functions and L{\'e}vy constants}
\subjclass[2010]{Primary: 
11J70 
11K50 
; Secondary:  
05A15 
.}
 \keywords{Continued fractions, Generating functions.}
\thanks{
This work has been partially supported by CAPES Special Visiting Researcher grant CSF-PVE-S - 88887.117899/2016-00.
}
\author{Anna Belova} 
\address{Anna Belova, Mathematics Department, Uppsala University, Uppsala, Sweden}
\email[]{anna.belova@math.uu.se}
\author{Peter Hazard} 
\address{Peter Hazard, Instituto de Matem\'{a}tica e Estat{\'i}stica, USP, S\~{a}o Paulo, SP, Brazil}
\email[]{pete@ime.usp.br}
\date{\today}
\begin{document}

\begin{abstract}
We show that the generating function corresponding 
to the sequence of denominators of the best rational 
approximants of a quadratic irrational
is a rational function with integer coefficients.
Consequently we can compute the L{\'e}vy constant 
of any quadratic irrational explicitly in terms of 
a finite number of its convergents.
\end{abstract}

\maketitle

\section{Introduction}
\subsection{Background.}
The aim of this article is to show the following 
theorem.
\begin{theorem}\label{thm:main-gen_fn_rat}
Let 
$\theta\in\mathbb{R}\setminus\mathbb{Q}$ 
be a quadratic irrational.
For each $n$, let $p_n/q_n$ denote the $n$th 
best rational approximant to $\theta$.
Then the generating functions
\begin{equation}
F(z)\;=\;\sum_{n\geq 0}p_n z^n \qquad \mbox{and} \qquad 
G(z)\;=\;\sum_{n\geq 0}q_n z^n
\end{equation}
are both rational functions (of the variable $z$) with integer coefficients.
\end{theorem}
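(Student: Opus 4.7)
The plan is to exploit two classical facts about quadratic irrationals together with one piece of elementary linear algebra. First, by Lagrange's theorem, $\theta$ has an eventually periodic continued fraction expansion, say
\[
\theta \;=\; [a_0; a_1, \ldots, a_N, \overline{a_{N+1}, \ldots, a_{N+k}}]
\]
for some preperiod length $N \geq 0$ and period $k \geq 1$. Second, by the standard matrix encoding of convergents, if $M_j := \bigl(\begin{smallmatrix} a_j & 1 \\ 1 & 0 \end{smallmatrix}\bigr)$ then the product $M_0 M_1 \cdots M_n$ records $p_n, p_{n-1}, q_n, q_{n-1}$ in its entries. Consequently, for every $n \geq N$,
\[
\begin{pmatrix} p_{n+k} & p_{n+k-1} \\ q_{n+k} & q_{n+k-1} \end{pmatrix}
\;=\;
\begin{pmatrix} p_n & p_{n-1} \\ q_n & q_{n-1} \end{pmatrix} A,
\qquad A \;:=\; M_{N+1} M_{N+2} \cdots M_{N+k},
\]
where $A$ is a fixed $2 \times 2$ matrix with integer entries.

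Next I would apply Cayley--Hamilton to $A$. Setting $t := \operatorname{tr}(A) \in \mathbb{Z}$ and $d := \det(A) = (-1)^k$, one has $A^2 = t A - d I$; right-multiplying the displayed identity by $A$ and reading off entries yields, for every $n \geq N$, the constant-coefficient recurrences
\[
p_{n+2k} \;=\; t\, p_{n+k} - d\, p_n,
\qquad
q_{n+2k} \;=\; t\, q_{n+k} - d\, q_n.
\]
For each residue $r \in \{0, 1, \ldots, k-1\}$, the subsequence $(p_{N+r+jk})_{j \geq 0}$ thus satisfies a two-term linear recurrence with constant integer coefficients, so a routine telescoping gives the explicit rational form
\[
F_r(w) \;:=\; \sum_{j \geq 0} p_{N+r+jk}\, w^j \;=\; \frac{p_{N+r} + (p_{N+r+k} - t\, p_{N+r}) w}{1 - tw + d w^2} \;\in\; \mathbb{Z}(w),
\]
with an identical formula for the analogous $G_r$.

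The final step is to reassemble the generating function:
\[
F(z) \;=\; \sum_{n=0}^{N-1} p_n z^n \;+\; \sum_{r=0}^{k-1} z^{N+r}\, F_r(z^k),
\]
and similarly for $G(z)$. Putting everything over the common denominator $1 - t z^k + d z^{2k}$ exhibits both $F$ and $G$ as rational functions with numerator and denominator in $\mathbb{Z}[z]$, as required.

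The argument is essentially mechanical once the setup is chosen; the real work is bookkeeping. The main potential obstacle is carefully handling the edge cases (purely periodic expansions $N=0$, period one $k=1$, and the conventions for $p_{-1}, q_{-1}$ used to anchor the matrix product), and verifying that integrality truly survives in the final quotient---but this is transparent from the fact that $A$ is a product of integer matrices, so both $t$ and $d$ are integers, and the initial data $p_{N+r}, p_{N+r+k}$ are themselves integers.
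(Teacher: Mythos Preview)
Your proof is correct, and it is considerably more direct than the paper's. Both arguments begin from Lagrange's theorem and the matrix form of the convergent recursion, but they diverge sharply after that.

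You apply Cayley--Hamilton to the period matrix $A$ once, obtaining immediately a single linear recurrence $p_{n+2k}=t\,p_{n+k}-d\,p_n$ of step $k$ with constant integer coefficients, valid for all $n\geq N$; the rationality of each residue-class generating function $F_r$ then drops out of the standard telescoping computation, and the reassembly is trivial. This is essentially the argument the paper attributes to Shallit and Lenstra.

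The paper instead writes down, for each $m=1,\ldots,\ell$, a relation expressing $F_{m+1+n}$ in terms of $F_{n+1}$ and $F_n$ via the generalised continuants $A^{(m)}$, $B^{(m)}$. This produces an $\ell\times\ell$ linear system $\mathsf{L}(z)\mathsf{F}(z)=\mathsf{I}(z)-p_k\mathsf{K}(z)$ for the vector of partial generating functions, which is then massaged by a further change of basis and solved by Cramer's rule, requiring a lengthy appendix of determinant computations over $\mathbb{Z}_\ell$-indexed column permutations. The payoff is the explicit numerator and denominator of Theorem~1.2, with the denominator identified as $z^{2\ell}\chi_{M_1}(1/z^\ell)$.

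Your approach in fact recovers the same explicit data with no extra work: your denominator $1-tw+dw^2$ is precisely the reversed characteristic polynomial of $A=M_1$, matching the paper's Corollary~3.1, and your numerator $p_{N+r}+(p_{N+r+k}-t\,p_{N+r})w$ is equivalent to the paper's formula~(1.6) once one uses the recurrence to rewrite $p_{N+r+k}-t\,p_{N+r}$ (at a suitably shifted pre-period). So what the paper's machinery buys here is not additional content but a self-contained derivation that avoids invoking Cayley--Hamilton explicitly; your route is the more economical proof of Theorem~1.1.
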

Here, by a quadratic irrational we mean an algebraic real number of strict degree two, 
so the simple continued fraction expansion of $\theta$ is eventually periodic.
Theorem~\eqref{thm:main-gen_fn_rat} generalises the known case when 
$\theta$ has pre-periodic continued fraction expansion 
of period one. We also note that the generating functions $F$ and $G$ 
are rational also follows from an argument in~\cite{ShallitLenstra2001}. 
However, the techniques we present here are different.
In fact, we obtain explicit formulas for $F$ and $G$. 
Namely, if 
$\ell$ denotes the eventual period of the continued fraction expansion of $\theta$, 
and $\kay$ denotes any pre-period of the continued fraction expansion of $\theta$
then we may write
\begin{align}
F(z)&\;=\;\sum_{0\leq n <\kay}p_nz^n+\sum_{\kay\leq n<\kay+\ell} F_{n}(z)
\end{align}
and
\begin{align}
G(z)&\;=\;\sum_{0\leq n <\kay}q_nz^n+\sum_{\kay\leq n<\kay+\ell} G_{n}(z)
\end{align}
where, for each $n\in\mathbb{N}_0$,
\begin{align}
F_{n}(z)\;=\;\sum_{r\in\mathbb{N}_0} p_{n+r\ell}z^{n+r\ell} \qquad \mbox{and} \qquad
G_{n}(z)\;=\;\sum_{r\in\mathbb{N}_0} q_{n+r\ell}z^{n+r\ell}
\end{align}
With this notation, the above Theorem~\ref{thm:main-gen_fn_rat} is a corollary of the following result.
\begin{theorem}\label{thm:main-gen_fn_rat2}
Let 
$\theta\in\mathbb{R}\setminus\mathbb{Q}$ 
be a quadratic irrational.
Let $\ell$ denote the minimal (eventual) period of the simple continued fraction expansion of $\theta$, and 
let $\kay\geq 2$ be any non-minimal pre-period.
For each $n$, let $p_n/q_n$ denote the $n$th best rational approximant of $\theta$.
For $\kay\leq n<\kay+\ell$, let $F_n$ and $G_n$ be defined as above.
Then
\begin{align}
F_{n}(z)&\;=\;\frac{z^n p_n+z^{n+\ell} (-1)^{\ell+1}p_{n-\ell}}{1-(-1)^\kay \delta z^\ell+(-1)^\ell z^{2\ell}}\label{eq:Fn}
\end{align}
and
\begin{align}
G_{n}(z)&\;=\;\frac{z^n q_n+z^{n+\ell} (-1)^{\ell+1}q_{n-\ell}}{1-(-1)^\kay \delta z^\ell+(-1)^\ell z^{2\ell}}\label{eq:Gn}
\end{align}
where
\begin{equation}
\delta\;=\;
\delta_\theta\;=\;
q_{\kay}p_{\kay+\ell-1}-p_{\kay}q_{\kay+\ell-1}-q_{\kay-1}p_{\kay+\ell}+p_{\kay-1}q_{\kay+\ell} \ .
\end{equation}
\end{theorem}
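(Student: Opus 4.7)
The plan is to encode the convergents in the $2\times 2$ matrix
\[
A_n \;=\; \begin{pmatrix} p_n & p_{n-1} \\ q_n & q_{n-1} \end{pmatrix},
\]
which satisfies $A_n=A_{n-1}M_n$ with $M_n=\left(\begin{smallmatrix}a_n & 1\\ 1 & 0\end{smallmatrix}\right)$, and to set $N_n:=M_{n+1}M_{n+2}\cdots M_{n+\ell}$ so that $A_{n+\ell}=A_nN_n$. Since $\kay$ is a pre-period we have $M_{i+\ell}=M_i$ for every $i\geq\kay$, so $N_{n+\ell}=N_n$ whenever $n\geq\kay-1$, and these matrices $N_n$ are obtained from the period matrix $P:=M_\kay M_{\kay+1}\cdots M_{\kay+\ell-1}$ by cyclically rearranging its factors. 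In particular they share a common trace $\tau$ and the common determinant $\det N_n=(-1)^\ell$.

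Cayley--Hamilton then gives $N_n^2=\tau N_n-(-1)^\ell I$, which combined with $A_{n+2\ell}=A_{n+\ell}N_{n+\ell}=A_nN_n^2$ yields the matrix recurrence
\[
A_{n+2\ell} \;=\; \tau A_{n+\ell}-(-1)^\ell A_n \qquad (n\geq\kay-1).
\]
Reading off each of the four matrix entries produces simultaneous linear recurrences for the sequences $(p_n)$ and $(q_n)$. Shifting these recurrences back by $\ell$ and using the hypothesis that $\kay$ is a \emph{non-minimal} pre-period---which gives precisely the slack needed in the index range---then yields, for every $n\in[\kay,\kay+\ell)$, the key identities
\[
p_{n+\ell}=\tau p_n-(-1)^\ell p_{n-\ell}, \qquad q_{n+\ell}=\tau q_n-(-1)^\ell q_{n-\ell}.
\]
To identify $\tau$ with $(-1)^\kay\delta$, I would compute $\mathrm{tr}(N_\kay)=\mathrm{tr}(A_\kay^{-1}A_{\kay+\ell})$ directly: using $\det A_\kay=(-1)^{\kay-1}$, one inverts $A_\kay$ explicitly, multiplies out with $A_{\kay+\ell}$, and sums the $(1,1)$ and $(2,2)$ entries, which after sign-tracking rearrangement recovers exactly $(-1)^\kay\delta$.

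With these two ingredients in hand the generating function formula follows by a rote manipulation. Because $(p_{n+r\ell})_{r\geq 0}$ satisfies $x_r-\tau x_{r-1}+(-1)^\ell x_{r-2}=0$ for $r\geq 2$, the standard computation gives
\[
\sum_{r\geq 0}p_{n+r\ell}\,w^r \;=\; \frac{p_n+(p_{n+\ell}-\tau p_n)\,w}{1-\tau w+(-1)^\ell w^2};
\]
substituting $w=z^\ell$, multiplying by $z^n$, and replacing $p_{n+\ell}-\tau p_n$ by $(-1)^{\ell+1}p_{n-\ell}$ using the identity from the previous paragraph produces~\eqref{eq:Fn}. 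The argument for $G_n$ is identical, working with the second row of $A_n$ rather than the first, so~\eqref{eq:Gn} follows by the same manipulation. The principal obstacle I anticipate is the careful bookkeeping: tracking signs through the $\tau=(-1)^\kay\delta$ computation, and pinning down the precise index range on which the shifted scalar recurrence is valid, since it is exactly this latter step that exploits the non-minimality of $\kay$ as a pre-period.
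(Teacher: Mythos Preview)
Your approach is genuinely different from, and far more elementary than, the paper's. The paper assembles the $F_n$'s into an $\ell$-vector, sets up an $\ell\times\ell$ linear system $\L(z)\F(z)=\cdots$, and solves it by Cramer's rule; the bulk of the work is the combinatorial determinant and minor computations of Appendix~A (Theorems~A.1 and~A.2, with their analysis of ``non-decreasing'' strings over $\{0,1,2\}$). Your route---Cayley--Hamilton on the single $2\times 2$ matrix $N_n$, giving a second-order scalar recurrence in steps of $\ell$, followed by the standard generating-function identity for such recurrences---bypasses all of that machinery. Your trace computation $\tau=\mathrm{tr}(A_\kay^{-1}A_{\kay+\ell})=(-1)^\kay\delta$ is also correct and matches the paper's Proposition~2.4.

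There is, however, a real gap at the ``shift back by $\ell$'' step. From non-minimality of $\kay$ you get $M_{i+\ell}=M_i$ for $i\geq\kay$ (not merely $i>\kay$), hence $N_{m+\ell}=N_m$ for $m\geq\kay-1$, hence the matrix recurrence for $m\geq\kay-1$ and the scalar one for $m\geq\kay-2$. After the shift you obtain $p_{n+\ell}=\tau p_n-(-1)^\ell p_{n-\ell}$ only for $n\geq\kay+\ell-2$, which covers just two of the $\ell$ values in $[\kay,\kay+\ell)$. Non-minimality buys exactly one step of slack, not the $\ell-1$ you need; more precisely, your scalar recurrence holds for $m\geq\kay_0-1$ (with $\kay_0$ the minimal pre-period), so the shifted identity holds for $n\geq\kay_0+\ell-1$, and you would need $\kay\geq\kay_0+\ell-1$ to cover all of $[\kay,\kay+\ell)$.

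This is not merely a bookkeeping issue you can power through: the theorem as stated is in fact false for small $\kay$. Take $\theta=[2,3,\overline{1,1,1,1}]$, so $\kay_0=2$, $\ell=4$, and choose $\kay=3$ (non-minimal, $\geq 2$). One computes $\tau=7$, $p_3=4$, $p_7=29$, $p_{-1}=1$, and the theorem's numerator predicts the $z^7$-coefficient of $F_3$ to be $7\cdot 4-1=27$, whereas it is $29$. The paper's own proof tacitly imposes ``$\kay$ non-zero and greater than or equal to $\ell$'', contradicting the stated $\kay\geq 2$. Under the correct hypothesis (say $\kay\geq\kay_0+\ell-1$), your argument goes through and is substantially cleaner than the paper's.
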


Using our result, we can easily derive the 
following results concerning the 
{\it L{\'e}vy constant}
of a quadratic irrational.
Recall that, given a real number $\theta$ 
with $n$th best rational approximant given by $p_n/q_n$ for each $n$, 
the L{\'e}vy constant of $\theta$, when it exists, 
is given by the following expression
\begin{equation}
\beta(\theta)\;=\;\lim_{n\to\infty}\frac{1}{n}\log q_n \ .
\end{equation}
Paul L{\'e}vy~\cite{Levy1929} showed, following earlier work by A.~Ya.~Khintchine, 
that 
\begin{equation}
\beta(\theta)\;=\;\frac{\pi^2}{12\log 2} \qquad \mbox{for Lebesgue-almost every} \ \theta \ .
\end{equation}
(See~\cite{LevyBook,KhinchinBook,Lehmer1939} for more details.)
\begin{comment}
C.~Faivre~\cite{Faivre1992,Faivre1997} showed that for 
any real number $\beta$ in the interval $[\frac{1+\sqrt{5}}{2},\infty)$ there exists 
a real number $\theta$ such that
$\beta(\theta)=\beta$.
\end{comment}
\begin{comment}
A straightforward comparison with the golden mean, 
together with the fact that $q_n$ increases with any of the partial quotients $a_1,a_2,\ldots,a_n$, implies that
\begin{equation}
q_n^{1/n}\gtrsim \frac{1+\sqrt{5}}{2}
\end{equation}
\end{comment}
It was shown by Jager and Liardet~\cite{JagerLiardet1988} 
that for every quadratic irrational, the L{\'e}vy constant exists. 
As an immediate  corollary to Theorem~\eqref{thm:main-gen_fn_rat} above 
we get a new proof of the following result, 
which was implicitly contained in~\cite{JagerLiardet1988}.
\begin{theorem}\label{thm:Levy_const-vs-spec_radius}
Let $\theta\in\mathbb{R}\setminus\mathbb{Q}$ be a quadratic irrational. 
Let $\ell$ denote the (eventual) period of the simple continued fraction expansion of $\theta$.
Let $M_\theta$ denote the element of $\mathrm{PSL}(2,\mathbb{Z})$ corresponding to the simple continued fraction expansion of $\theta$.
Then 
\begin{equation}
\beta(\theta)\;=\;\frac{1}{\ell}\log\rad(M_\theta)
\end{equation}
where, for every $A\in\mathrm{PSL}(2,\mathbb{C})$, 
we denote by $\rad(A)$ the spectral radius of either of the linear transformations corresponding to $A$. 
\end{theorem}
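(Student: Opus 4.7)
The plan is to combine the explicit rational form of $G(z)$ from Theorem~\ref{thm:main-gen_fn_rat2} with a spectral analysis of $M_\theta$. By that theorem, $G(z)$ is a rational function whose poles all lie among the zeros of the common denominator
$$D(z)\;=\;1-(-1)^\kay \delta z^\ell + (-1)^\ell z^{2\ell}.$$
Since $\sum_n q_n z^n$ has positive integer coefficients tending to infinity, its radius of convergence is real, positive, and strictly less than $1$; by Pringsheim's theorem this radius equals the smallest modulus of a pole of $G(z)$, and hence $\limsup_n q_n^{1/n}$ equals the reciprocal of the smallest-modulus root of $D$.

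The central step is to recognise $D(z)$, viewed as a quadratic in $w=z^\ell$, as the reciprocal polynomial of the characteristic polynomial of $M_\theta$. Writing
$$T_n\;=\;\begin{pmatrix}p_n & p_{n-1}\\ q_n & q_{n-1}\end{pmatrix}\;=\;\prod_{j=0}^{n}\begin{pmatrix}a_j & 1\\ 1 & 0\end{pmatrix}$$
for the standard convergent matrix, the periodicity of the continued fraction implies that $M_\theta = T_\kay^{-1}T_{\kay+\ell}$ (for $\kay\geq 2$) is a representative of the element described in the theorem. Using the identity $p_n q_{n-1}-p_{n-1}q_n=(-1)^{n+1}$, a direct cofactor computation yields
$$\mathrm{tr}(M_\theta)\;=\;(-1)^\kay\delta,\qquad \det(M_\theta)\;=\;(-1)^\ell,$$
so that the characteristic polynomial of $M_\theta$ is $\lambda^2-(-1)^\kay\delta\,\lambda+(-1)^\ell$, whose reciprocal is precisely $D(z)$ after substituting $w=z^\ell$. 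Hence the $w$-roots of $D$ are $\lambda_1^{-1},\lambda_2^{-1}$, where $\lambda_1,\lambda_2$ denote the eigenvalues of $M_\theta$.

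Because $\theta$ is a quadratic irrational, the M\"obius transformation associated with $M_\theta$ fixes the (irrational) purely-periodic tail of its continued fraction expansion, so $M_\theta$ is hyperbolic with real, distinct eigenvalues satisfying $|\lambda_1|=\rad(M_\theta)>1>|\lambda_2|$. The smallest-modulus root of $D$ in $z$ therefore has modulus $\rad(M_\theta)^{-1/\ell}$. A partial-fraction decomposition of each $G_n(z)$ shows that along every residue class $n_0+r\ell$ modulo $\ell$,
$$q_{n_0+r\ell}\;=\;A_{n_0}\lambda_1^{r}+B_{n_0}\lambda_2^{r},$$
with $A_{n_0}\neq 0$ (otherwise $q_n$ would fail to tend to infinity), so $q_{n_0+r\ell}\sim A_{n_0}\lambda_1^{r}$ and
$$\beta(\theta)\;=\;\lim_{r\to\infty}\frac{\log q_{n_0+r\ell}}{n_0+r\ell}\;=\;\frac{1}{\ell}\log|\lambda_1|\;=\;\frac{1}{\ell}\log\rad(M_\theta).$$
The main obstacle is the trace identity $\mathrm{tr}(M_\theta)=(-1)^\kay\delta$: although each step of its verification is elementary, the sign conventions for determinants, matrix ordering, and the parity-dependent factors $(-1)^\kay$ and $(-1)^\ell$ must be tracked with care in order to recover the precise formula for $\delta$ recorded in Theorem~\ref{thm:main-gen_fn_rat2}.
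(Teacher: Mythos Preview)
Your argument is correct and follows the same overall strategy as the paper: identify the denominator of $G(z)$ from Theorem~\ref{thm:main-gen_fn_rat2} with the reciprocal of the characteristic polynomial of $M_\theta$, and then read off the L\'evy constant from the smallest-modulus pole. The tactical choices differ in three places, however. First, the paper obtains the identification $v(\xi)=\xi^2\chi(1/\xi)$ via Corollary~\ref{cor:alpha_vs_chi}, which traces back through the matrix $M_1$ and the appendix computations; you instead compute $\mathrm{tr}(M_\theta)=(-1)^{\kay}\delta$ and $\det(M_\theta)=(-1)^\ell$ directly from the convergent matrices, which is more self-contained. Second, to rule out cancellation of the dominant pole the paper proves a separate Claim that the numerator $u(z)$ has no zeros in $(0,1)$, using the inequality $q_n>q_{n-\ell}$; you argue instead via the partial-fraction coefficients $A_{n_0}\neq 0$, since $A_{n_0}=0$ would force $q_{n_0+r\ell}\to 0$. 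Third, the paper cites Jager--Liardet for the existence of $\beta(\theta)$ and then invokes Cauchy--Hadamard, whereas your partial-fraction asymptotic $q_{n_0+r\ell}\sim A_{n_0}\lambda_1^r$ yields existence directly and makes the appeal to Pringsheim unnecessary. Each of your substitutions is a legitimate simplification; the only caution, which you already flag, is that the sign bookkeeping in the trace identity must be done carefully to match the paper's conventions (note in particular that the paper's elementary matrices are $\left[\begin{smallmatrix}0&1\\1&a_j\end{smallmatrix}\right]$, the transpose of yours, though this does not affect trace or determinant).
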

(The description of $M_\theta$ will be given in more detail below.)
\begin{comment}
As a corollary we get the following result originally due to J.~Wu.
\begin{corollary}
The set of $\beta(\theta)$, where $\theta$ is a quadratic irrational, is dense in $[\frac{1+\sqrt{5}}{2},\infty)$.
\end{corollary}
\end{comment}
\begin{comment}
V. I.~Arnold~\cite[Problem 1993-11B]{ArnoldsProblems} discussed the problem 
of Gauss-Kuzmin statistics for rationals and quadratic irrationals.
\end{comment}

\subsection{Notation and Terminology.}
Let $\mathbb{N}$ and $\mathbb{N}_0$ denote the set of positive and non-negative integers respectively.
Denote the set of integers by $\mathbb{Z}$ and, for each positive integer $\ell$ let $\mathbb{Z}_\ell$ 
denote the set of integers modulo $\ell$, i.e. $\mathbb{Z}_\ell=\mathbb{Z}/\ell\mathbb{Z}$.
Let $\mathbb{R}$ and $\mathbb{C}$ denote, as usual, the real and complex number fields.
Given an arbitrary polynomial $\eta$, over either $\mathbb{R}$ or $\mathbb{C}$, we denote the discriminant by $\discr_\eta$. 
  
Given $\theta\in\mathbb{R}$, 
let $\lfloor\theta\rfloor$ denote the {\it integer part} of $\theta$, 
{\it i.e.\/}, greatest integer less than or equal to $\theta$, 
and let $\{\theta\}=\theta-\lfloor\theta\rfloor$ denote the {\it fractional part} of $\theta$. 

Let $\mathcal{S}_\ell$ denote the symmetric group of size $\ell$, {\it i.e.}, 
the permutation group on a set of $\ell$ elements.
We will denote the signature of a permutation $\upsilon$ in $\mathcal{S}_\ell$ by $\sgn(\upsilon)$.
(Recall, any permutation can be written, non-uniquely,  
as a product of adjacent transpositions $( s \ s+1 )$, and the parity of the total number of such transpositions is the signature.)
Such a permutation $\upsilon$ can be expressed as a product of cycles.
A single cycle will be expressed as 
$(s_{1},s_{2},\ldots,s_{k})$, 
for some $s_1,s_2,\ldots,s_k\in \{1,2,\ldots,\ell\}$ 
where 
$\upsilon (s_{j})=s_{j-1}$ for each $j$, where addition is taken mod $\ell$.
We denote the cardinality of a set $S$ by $\card S$. 

\subsection*{Acknowledgements.}
The authors would like to thank the Mathematics Department at Uppsala University and IME-USP for their continued hospitality and support.
We thank Charles Tresser and Edson de Faria for reading an earlier draft of this article and 
Alby Fisher, Sinai Robins and Andreas Str\"ombergsson for their questions and comments.
We also thank Alex Har\'o for informing us of a mistake in the previous draft and Jarkko Peltom\"aki, 
for bringing the article~\cite{ShallitLenstra2001} to our attention.

\section{Preliminaries.}
\subsection{Continued fractions.}\label{sect:ctd_frac_1}
In this section we recall some basic properties of simple continued fraction expansions.
The main aim is to set up notation for the rest of the paper.
For more details we recommend that the reader consults~\cite{KhinchinBook,HardyWrightBook}. 
\subsubsection{Best rational approximants.}\label{subsect:best_approx}
Let $\theta\in[0,1]\setminus\mathbb{Q}$.
We note that the discussion below can be carried out also for irrational points outside $[0,1]$, with suitable modifications.
However, for simplicity we restrict ourselves to the case $\theta\in [0,1]\setminus\mathbb{Q}$.

The simple continued fraction expansion of $\theta$ is denoted by 
\begin{equation}
\theta
\;=\;\left[a_1,a_2,\ldots\right]
\;=\;\frac{1}{a_1+}\frac{1}{a_2+}\cdots\frac{1}{a_n+}\cdots \ ,
\end{equation}
where $a_1,a_2,\ldots$ are positive integers called the {\it partial quotients} of the continued fraction.
Define the {\it $n$th convergent} of $\theta$ to be
\begin{equation}\label{eq:best_rat_approx}
[a_1,a_2,\ldots,a_n]
\;=\;\frac{1}{a_1+}\frac{1}{a_2+}\cdots\frac{1}{a_{n-1}+}\frac{1}{a_n} \ .
\end{equation}
This is a rational number which we will express as $p_n/q_n$, 
where $p_n$ and $q_n$ are positive integers having no common factors.
The following property is satisfied for all $n\in\mathbb{N}$
\begin{equation}
\left|\theta-\frac{p_n}{q_n}\right|
\;\leq\; 
\inf_{\frac{p}{q}\in\mathbb{Q}: q\leq q_n}\left|\theta-\frac{p}{q}\right| \ .
\end{equation}
For this reason $p_n/q_n$ is also called the {\it $n$th best rational approximant} of $\theta$.
Identifying $\mathbb{C}$ with $\mathbb{P}(\mathbb{C}^2)\setminus \left\{[1:0]\right\}$, 
equation~\eqref{eq:best_rat_approx} can be expressed in matrix form as
\begin{align}
\left[\begin{array}{c}p_n\\ q_n\end{array}\right]
&\;=\;
\left[\begin{array}{cc}0&1\\ 1&a_1\end{array}\right]
\left[\begin{array}{cc}0&1\\ 1&a_2\end{array}\right]
\cdots
\left[\begin{array}{cc}0&1\\ 1&a_{n-1}\end{array}\right]
\left[\begin{array}{c}1\\ a_{n}\end{array}\right] \ . \label{eq:pq_n}
\end{align}
Similarly
\begin{align}
\left[\begin{array}{c}p_{n-1}\\ q_{n-1}\end{array}\right]
&\;=\;
\left[\begin{array}{cc}0&1\\ 1&a_1\end{array}\right]
\left[\begin{array}{cc}0&1\\ 1&a_2\end{array}\right]
\cdots
\left[\begin{array}{cc}0&1\\ 1&a_{n-1}\end{array}\right]
\left[\begin{array}{c}0\\ 1\end{array}\right] \ . \label{eq:pq_n-1}
\end{align}
Thus combining~\eqref{eq:pq_n} and~\eqref{eq:pq_n-1} we find that
\begin{equation}\label{eq:recursion_rel_simple}
\left[\begin{array}{cc}p_{n-1}&p_{n}\\ q_{n-1}&q_{n}\end{array}\right]
\;=\;
\left[\begin{array}{cc}0&1\\ 1&a_1\end{array}\right]
\left[\begin{array}{cc}0&1\\ 1&a_2\end{array}\right]
\cdots
\left[\begin{array}{cc}0&1\\ 1&a_{n-1}\end{array}\right]
\left[\begin{array}{cc}0&1\\ 1&a_{n}\end{array}\right] \ .
\end{equation}
We therefore inductively get the following recurrence relation
\begin{equation}
\left[\begin{array}{cc}p_{n-1}& p_{n}\\ q_{n-1}&q_{n}\end{array}\right]
\;=\;
\left[\begin{array}{cc}p_{n-2} & p_{n-1}\\ q_{n-2}&q_{n-1}\end{array}\right]
\left[\begin{array}{cc}0&1\\ 1&a_{n}\end{array}\right] \ ,
\quad
\left[\begin{array}{cc}p_{0}&p_{1}\\ q_{0}&q_{1}\end{array}\right]
\;=\;
\left[\begin{array}{cc}0&1\\ 1&a_1\end{array}\right] \ ,
\end{equation}
and, more generally, for any non-negative integer $m\leq n-2$,
\begin{equation}
\label{eq:recursion_rel}
\left[\begin{array}{cc}p_{n-1}& p_{n}\\ q_{n-1}&q_{n}\end{array}\right]
\;=\;
\left[\begin{array}{cc}p_{n-m-2} & p_{n-m-1}\\ q_{n-m-2}&q_{n-m-1}\end{array}\right]
\left[\begin{array}{cc}0&1\\ 1&a_{n-m}\end{array}\right]
\cdots
\left[\begin{array}{cc}0&1\\ 1&a_{n}\end{array}\right] \ .
\end{equation}
For each suitable $n$ and $m$ define
\begin{equation}
\label{eq:AB_definition}
\left[\begin{array}{cc}B^{(m)}_{n-m-1}&B^{(m+1)}_{n-m-1}\\ A^{(m)}_{n-m-1}&A^{(m+1)}_{n-m-1}\end{array}\right]
\;=\;
\left[\begin{array}{cc}0&1\\ 1&a_{n-m}\end{array}\right]
\cdots
\left[\begin{array}{cc}0&1\\ 1&a_{n}\end{array}\right] \ .
\end{equation}
Observe that the 
$A^{(m)}_{n-m}$ and $B^{(m)}_{n-m}$ 
are well-defined since we have the relation
\begin{equation}\label{eq:A_relation_1}
\left[\begin{array}{ll}
B^{(m)}_{n-m-1}& B^{(m+1)}_{n-m-1}\\
A^{(m)}_{n-m-1}& A^{(m+1)}_{n-m-1}
\end{array}\right]
\;=\;
\left[\begin{array}{cc}
0&1\\1&a_{n-m}
\end{array}\right]
\left[\begin{array}{ll}
B^{(m-1)}_{n-m}& B^{(m)}_{n-m}\\
A^{(m-1)}_{n-m}& A^{(m)}_{n-m}
\end{array}\right] \ .
\end{equation}
This could equally be inferred from the corresponding dual relation
\begin{equation}\label{eq:A_relation_2}
\left[\begin{array}{ll}
B^{(m)}_{n-m-1}& B^{(m+1)}_{n-m-1}\\
A^{(m)}_{n-m-1}& A^{(m+1)}_{n-m-1}
\end{array}\right]
\;=\;
\left[\begin{array}{ll}
B^{(m-1)}_{n-m-1}& B^{(m)}_{n-m-1}\\
A^{(m-1)}_{n-m-1}& A^{(m)}_{n-m-1}
\end{array}\right]
\left[\begin{array}{cc}
0&1\\1&a_{n}
\end{array}\right] \ .
\end{equation}
We gather some well-known basic properties in the following proposition.
\begin{proposition}\label{prop:basic_properties}
Let $\theta\in[0,1]\setminus\mathbb{Q}$.
Then the best rational approximants $p_n/q_n$ satisfy the following:
\begin{enumerate}
\item
$p_{n-1}q_{n}-p_{n}q_{n-1}=(-1)^n$
[Lagrange identity];
\item
$\frac{p_{n-1}}{p_{n}}=[a_{n},a_{n-1},\ldots,a_3,a_2]$;
\item
$\frac{q_{n-1}}{q_{n}}=[a_{n},a_{n-1},\ldots,a_2,a_1]$;
\item
$\frac{p_{n-1}z+p_n}{q_{n-1}z+q_n}=[a_1,a_2,\ldots,a_n,1/z]$.
\end{enumerate}
\end{proposition}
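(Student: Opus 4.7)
The plan is to read everything off from the matrix identity~\eqref{eq:recursion_rel_simple}. Writing $M_i$ for the symmetric matrix $\left[\begin{smallmatrix}0&1\\1&a_i\end{smallmatrix}\right]$, that identity asserts
\begin{equation*}
\left[\begin{array}{cc}p_{n-1}&p_n\\ q_{n-1}&q_n\end{array}\right] \;=\; M_1 M_2 \cdots M_n.
\end{equation*}

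For item~(1), I would simply take determinants of both sides: each factor has $\det M_i=-1$, so the right-hand side has determinant $(-1)^n$, while the left-hand side has determinant $p_{n-1}q_n-p_nq_{n-1}$. This is the Lagrange identity.

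For items~(2) and~(3), the key observation is that each $M_i$ is symmetric, so transposing the matrix product reverses its order:
\begin{equation*}
\left[\begin{array}{cc}p_{n-1}&q_{n-1}\\ p_n&q_n\end{array}\right] \;=\; M_n M_{n-1} \cdots M_1.
\end{equation*}
Applying~\eqref{eq:recursion_rel_simple} to the reversed sequence of partial quotients $a_n,a_{n-1},\ldots,a_1$, the right-hand side is exactly the convergent matrix of the continued fraction $[a_n,a_{n-1},\ldots,a_1]$. Reading off its right column yields $q_{n-1}/q_n=[a_n,a_{n-1},\ldots,a_1]$, which is item~(3). Reading off its left column yields $p_{n-1}/p_n$ equal to the $(n-1)$st convergent of the same reversed sequence, namely $[a_n,a_{n-1},\ldots,a_2]$, giving item~(2).

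For item~(4), I would extend the matrix recursion by allowing a real (rather than integer) value $x$ as a final partial quotient. Multiplying the product on the right by $\left[\begin{smallmatrix}0&1\\ 1&x\end{smallmatrix}\right]$ produces a new rightmost column $[\,p_{n-1}+p_n x,\ q_{n-1}+q_n x\,]^T$, which, under the same projective interpretation used to derive~\eqref{eq:recursion_rel_simple}, represents the value $[a_1,\ldots,a_n,x]$. Setting $x=1/z$ and clearing the factor $1/z$ from numerator and denominator gives the desired formula. No serious obstacle arises: the entire proposition rests on the two elementary facts $\det M_i=-1$ and $M_i^T=M_i$. The only point requiring care is the bookkeeping in the reversed sequence, namely checking that the rightmost and second-rightmost columns of the convergent matrix of $[a_n,\ldots,a_1]$ correspond to its $n$th and $(n-1)$st convergents, which is immediate from the definitions but easy to misindex.
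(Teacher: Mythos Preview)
Your proof is correct and follows essentially the same approach as the paper: determinant for item~(1), transpose for items~(2) and~(3), and the projective/matrix interpretation for item~(4). The only cosmetic difference is that in item~(4) the paper applies the matrix $M_1\cdots M_n$ directly to the vector $\left[\begin{smallmatrix}z\\1\end{smallmatrix}\right]$ rather than first multiplying by $\left[\begin{smallmatrix}0&1\\1&1/z\end{smallmatrix}\right]$, but projectively these amount to the same computation.
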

\begin{proof}
The first item follows by taking the determinant of
~\eqref{eq:recursion_rel_simple}.
The second and third items follow by taking the transpose of
~\eqref{eq:recursion_rel_simple}.
The last item follows by applying~\eqref{eq:recursion_rel_simple} 
to the vector 
$\left[\begin{array}{c}z\\ 1\end{array}\right]$.
\end{proof}
To summarize, the numerators and denominators of the convergents satisfy 
a recursion relation~\eqref{eq:recursion_rel_simple} which 
may be (re)stated as
\begin{align}
p_{n}&\;=\;a_{n}p_{n-1}+p_{n-2} \ ;& p_0&\;=\;0& p_1&\;=\;1\label{eq:recurrence_rel_p}\\
q_{n}&\;=\;a_{n}q_{n-1}+q_{n-2} \ ;& q_0&\;=\;1& q_1&\;=\;a_1\label{eq:recurrence_rel_q}
\end{align}
and more generally
\begin{equation}\label{eq:AB_expansion}
\left.\begin{array}{lcllcl}
p_{n}&=& A^{(1)}_{n-1}p_{n-1}+B^{(1)}_{n-1}p_{n-2} \qquad &
q_{n}&=& A^{(1)}_{n-1}q_{n-1}+B^{(1)}_{n-1}q_{n-2}\\
     &=& A^{(2)}_{n-2}p_{n-2}+B^{(2)}_{n-2}p_{n-3} \qquad &
     &=& A^{(2)}_{n-2}q_{n-2}+B^{(2)}_{n-2}q_{n-3}\\
&\vdots& &
&\vdots&
\end{array}\right.
\end{equation}
where $A^{(m)}_{n-m}$ and $B^{(m)}_{n-m}$ are 
non-negative integers satisfying the recurrence relations
\begin{equation}
\label{eq:AB_recurrence_rel}
\begin{gathered}
B^{(m)}_{n-m}
\;=\;A^{(m-1)}_{n-m+1} \qquad
A^{(m)}_{n-m}
\;=\;B^{(m-1)}_{n-m+1}+a_{n-m+1}A^{(m-1)}_{n-m+1}\\
B^{(0)}_{n}\;=\;0 \qquad 
B^{(1)}_{n-1}\;=\;1\;=\;A^{(0)}_{n}
\end{gathered}
\ \ .
\end{equation}
In fact, we have the following.
\begin{proposition}
Let $\theta\in[0,1]\setminus\mathbb{Q}$.
Then the coefficients $A^{(m)}_{n}$ satisfy the following:
\begin{align}
A^{(m)}_{n-m}&\;=\;a_{n-m+1}A^{(m-1)}_{n-m+1}+A^{(m-2)}_{n-m+2}\label{eq:AB_splitting_rel_1}\\
A^{(m)}_{n-m}&\;=\;a_n A^{(m-1)}_{n-m}+A^{(m-2)}_{n-m}\label{eq:AB_splitting_rel_2}
\end{align}
\end{proposition}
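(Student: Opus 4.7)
The plan is to derive each identity directly from one of the two matrix recurrence relations~\eqref{eq:A_relation_1}--\eqref{eq:A_relation_2}, which correspond to the two ways of peeling off a factor from the matrix product defining the $A$'s and $B$'s in~\eqref{eq:AB_definition}. Morally, the two identities express the fact that the continuant $A^{(m)}_{n-m}$ can be unfolded either from its left end (removing the factor indexed by $a_{n-m+1}$) or from its right end (removing the factor indexed by $a_n$).

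For~\eqref{eq:AB_splitting_rel_1}, I would invoke the recurrence~\eqref{eq:AB_recurrence_rel}, which is itself a consequence of the left-multiplication identity~\eqref{eq:A_relation_1}. It supplies both $A^{(m)}_{n-m} = B^{(m-1)}_{n-m+1} + a_{n-m+1}\, A^{(m-1)}_{n-m+1}$ and $B^{(m)}_{n-m} = A^{(m-1)}_{n-m+1}$. Applying the second of these with $m$ replaced by $m-1$ lets us rewrite the $B$-term as $B^{(m-1)}_{n-m+1} = A^{(m-2)}_{n-m+2}$; substitution into the first identity then gives~\eqref{eq:AB_splitting_rel_1} immediately.

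For~\eqref{eq:AB_splitting_rel_2}, I would appeal to the dual relation~\eqref{eq:A_relation_2}. Reading off the bottom-right entry of the matrix product on the right-hand side yields, in one step, $A^{(m+1)}_{n-m-1} = A^{(m-1)}_{n-m-1} + a_n\, A^{(m)}_{n-m-1}$, with no $B$-to-$A$ conversion required. Performing the substitution $m \mapsto m-1$ (which sends $m+1 \mapsto m$ on the superscript and $n-m-1 \mapsto n-m$ on the subscript, while leaving the index $n$ on $a_n$ untouched) converts this to the claimed form.

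The only genuine obstacle is bookkeeping the simultaneous super- and subscript shifts; the identities themselves are immediate once the appropriate entries of the two matrix products are inspected.
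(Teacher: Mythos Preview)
Your proposal is correct and follows essentially the same approach as the paper: the first identity is obtained by combining the two parts of~\eqref{eq:AB_recurrence_rel}, and the second by reading off an entry of the dual matrix relation~\eqref{eq:A_relation_2}. Your observation that the $(2,2)$ entry of~\eqref{eq:A_relation_2} already yields~\eqref{eq:AB_splitting_rel_2} without any $B$-to-$A$ conversion is a slight sharpening of the paper's terse instruction to ``apply~\eqref{eq:AB_recurrence_rel}'', but the argument is the same.
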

\begin{proof}
The first equality is just an application of~\eqref{eq:AB_recurrence_rel}.
For the second equality, 
multiply out the relation~\eqref{eq:A_relation_2} 
and apply~\eqref{eq:AB_recurrence_rel}
\end{proof}
\begin{proposition}\label{prop:pA&qA_splitting_rel} 
Let $\theta\in[0,1]\setminus\mathbb{Q}$.
Then the best rational approximants $p_n/q_n$ satisfy the following:
\begin{align}
           p_{n+m+1}&\;=\;p_{n}A^{(m+1)}_{n}+p_{n-1}A^{(m)}_{n+1}\label{eq:pA_splitting_rel_1}\\
(-1)^{m+1} p_{n-m-1}&\;=\;p_{n}A^{(m-1)}_{n-m}-p_{n-1}A^{(m)}_{n-m}\label{eq:pA_splitting_rel_2}\\
           q_{n+m+1}&\;=\;q_{n}A^{(m+1)}_{n}+q_{n-1}A^{(m)}_{n+1}\label{eq:qA_splitting_rel_1}\\
(-1)^{m+1} q_{n-m-1}&\;=\;q_{n}A^{(m-1)}_{n-m}-q_{n-1}A^{(m)}_{n-m}\label{eq:qA_splitting_rel_2}
\end{align}
\end{proposition}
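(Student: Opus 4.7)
The plan is to split the proposition into two cases: identities~\eqref{eq:pA_splitting_rel_1} and~\eqref{eq:qA_splitting_rel_1} follow by a direct specialization of the already-established expansion~\eqref{eq:AB_expansion}, while~\eqref{eq:pA_splitting_rel_2} and~\eqref{eq:qA_splitting_rel_2} require inverting the matrix identity~\eqref{eq:recursion_rel}.

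For the first pair, I would apply~\eqref{eq:AB_expansion}, in the form $p_N=A^{(j)}_{N-j}p_{N-j}+B^{(j)}_{N-j}p_{N-j-1}$ and the analogous formula for $q$, with the substitution $N=n+m+1$, $j=m+1$. This yields
\[
p_{n+m+1}=A^{(m+1)}_{n}\,p_{n}+B^{(m+1)}_{n}\,p_{n-1}
\]
and its $q$-counterpart. The identity $B^{(m+1)}_{n}=A^{(m)}_{n+1}$, which is precisely the first relation of~\eqref{eq:AB_recurrence_rel} under the same substitution, then converts these directly into~\eqref{eq:pA_splitting_rel_1} and~\eqref{eq:qA_splitting_rel_1}.

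For the second pair, I would start from the matrix identity~\eqref{eq:recursion_rel} together with~\eqref{eq:AB_definition}, rewritten as
\[
\begin{pmatrix}p_{n-1}&p_n\\q_{n-1}&q_n\end{pmatrix}=\begin{pmatrix}p_{n-m-2}&p_{n-m-1}\\q_{n-m-2}&q_{n-m-1}\end{pmatrix}\begin{pmatrix}B^{(m)}_{n-m-1}&B^{(m+1)}_{n-m-1}\\A^{(m)}_{n-m-1}&A^{(m+1)}_{n-m-1}\end{pmatrix}.
\]
The rightmost factor is a product of $m+1$ matrices each of determinant $-1$, so its determinant equals $(-1)^{m+1}$. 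Inverting it, solving for the middle factor, and reading off the second column of the resulting identity yields
\[
(-1)^{m+1}\,p_{n-m-1}=p_n\,B^{(m)}_{n-m-1}-p_{n-1}\,B^{(m+1)}_{n-m-1}
\]
together with its $q$-analogue from the bottom row. Applying the substitutions $B^{(m)}_{n-m-1}=A^{(m-1)}_{n-m}$ and $B^{(m+1)}_{n-m-1}=A^{(m)}_{n-m}$, both consequences of~\eqref{eq:AB_recurrence_rel}, then produces~\eqref{eq:pA_splitting_rel_2} and~\eqref{eq:qA_splitting_rel_2}.

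The argument is essentially linear algebra combined with bookkeeping; I expect the only real hurdle to be keeping the index shifts consistent across~\eqref{eq:AB_expansion},~\eqref{eq:AB_recurrence_rel} and~\eqref{eq:AB_definition}, and in the edge case $m=0$ interpreting the boundary superscript $A^{(-1)}_{\bullet}$ as $0$ (which is consistent with $B^{(0)}_{\bullet}=0$ via~\eqref{eq:AB_recurrence_rel}). No deeper obstacle is anticipated.
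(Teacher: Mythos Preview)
Your proposal is correct and follows essentially the same approach as the paper: use~\eqref{eq:AB_expansion} together with the $B$-to-$A$ conversion in~\eqref{eq:AB_recurrence_rel} for the first pair, and invert the matrix relation~\eqref{eq:recursion_rel}--\eqref{eq:AB_definition} (using that the rightmost factor has determinant $(-1)^{m+1}$) for the second pair. The only cosmetic difference is that the paper substitutes the $B$'s by $A$'s \emph{before} writing down the matrix identity (obtaining~\eqref{eq:pq_AB_rel_1}), whereas you invert first and substitute afterwards; the content is identical.
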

\begin{proof}
The first and third equality follows by applying the expression 
for $B^{(m)}_{n-m}$ in~\eqref{eq:AB_recurrence_rel} 
to~\eqref{eq:AB_expansion}.
For the second and fourth equality, equations~\eqref{eq:recursion_rel},~\eqref{eq:AB_definition} and~\eqref{eq:AB_recurrence_rel}, imply that
\begin{equation}\label{eq:pq_AB_rel_1}
\left[\begin{array}{cc}p_{n-1}&p_{n}\\ q_{n-1}&q_{n}\end{array}\right]
\;=\;
\left[\begin{array}{cc}p_{n-m-2}&p_{n-m-1}\\ q_{n-m-2}&q_{n-m-1}\end{array}\right]
\left[\begin{array}{cc}A^{(m-1)}_{n-m}& A^{(m)}_{n-m}\\ A^{(m)}_{n-m-1}&A^{(m+1)}_{n-m-1}\end{array}\right] \ .
\end{equation}
The rightmost matrix factor on the right-hand side has determinant $(-1)^{m+1}$, 
as the matrix is the product of $m+1$ matrices of determinant $-1$. 
Thus, applying the inverse of the rightmost matrix to both sides and multiplying out gives the required equalities. 
\end{proof}
\begin{proposition}\label{prop:trace_AB_pq}
Let $\theta\in[0,1]\setminus\mathbb{Q}$.
Then the best rational approximants $p_n/q_n$ satisfy the following:
\begin{align}\label{eq:trace_AB_pq}
&A^{(m-1)}_{n-m}+A^{(m+1)}_{n-m-1}\notag\\
&\;=\;(-1)^{n-m-1}\left(q_{n-m-1}p_{n-1}-p_{n-m-1}q_{n-1}-q_{n-m-2}p_{n}+p_{n-m-2}q_{n}\right)
\end{align}
\end{proposition}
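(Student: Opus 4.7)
The key observation is that the left-hand side is exactly the trace of the matrix on the right side of~\eqref{eq:AB_definition}. Indeed, by the recurrence~\eqref{eq:AB_recurrence_rel} we have $B^{(m)}_{n-m-1}=A^{(m-1)}_{n-m}$ and $B^{(m+1)}_{n-m-1}=A^{(m)}_{n-m}$, so that
\begin{equation*}
\left[\begin{array}{cc}A^{(m-1)}_{n-m}&A^{(m)}_{n-m}\\ A^{(m)}_{n-m-1}&A^{(m+1)}_{n-m-1}\end{array}\right]
\;=\;
\left[\begin{array}{cc}0&1\\ 1&a_{n-m}\end{array}\right]\cdots\left[\begin{array}{cc}0&1\\ 1&a_{n}\end{array}\right] \ ,
\end{equation*}
whose trace is the left-hand side of~\eqref{eq:trace_AB_pq}.

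The plan is then to exploit~\eqref{eq:pq_AB_rel_1} to solve for this matrix explicitly in terms of convergents. Rearranging~\eqref{eq:pq_AB_rel_1} as
\begin{equation*}
\left[\begin{array}{cc}A^{(m-1)}_{n-m}& A^{(m)}_{n-m}\\ A^{(m)}_{n-m-1}&A^{(m+1)}_{n-m-1}\end{array}\right]
\;=\;
\left[\begin{array}{cc}p_{n-m-2}&p_{n-m-1}\\ q_{n-m-2}&q_{n-m-1}\end{array}\right]^{-1}
\left[\begin{array}{cc}p_{n-1}&p_{n}\\ q_{n-1}&q_{n}\end{array}\right] \ ,
\end{equation*}
the Lagrange identity (item (1) of Proposition~\ref{prop:basic_properties}) tells us that the $p,q$-matrix on the left has determinant $(-1)^{n-m-1}$. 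Since $(-1)^{n-m-1}=(-1)^{-(n-m-1)}$, its inverse is
\begin{equation*}
(-1)^{n-m-1}\left[\begin{array}{cc}q_{n-m-1}&-p_{n-m-1}\\ -q_{n-m-2}&p_{n-m-2}\end{array}\right] \ .
\end{equation*}

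Finally I would perform the matrix product, read off the $(1,1)$ and $(2,2)$ entries, and sum them to obtain the trace. The $(1,1)$ entry is $(-1)^{n-m-1}(q_{n-m-1}p_{n-1}-p_{n-m-1}q_{n-1})$ and the $(2,2)$ entry is $(-1)^{n-m-1}(-q_{n-m-2}p_{n}+p_{n-m-2}q_{n})$, whose sum yields exactly the right-hand side of~\eqref{eq:trace_AB_pq}. There is no real obstacle beyond careful bookkeeping of indices and signs; the whole argument is linear algebra combined with the Lagrange identity.
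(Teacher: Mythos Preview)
Your proposal is correct and follows essentially the same route as the paper: rearrange~\eqref{eq:pq_AB_rel_1} by inverting the convergent matrix via the Lagrange identity, then take the trace. The paper's proof is terser (it does not spell out the diagonal entries or the preliminary identification of the left-hand side as a trace), but the argument is the same.
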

\begin{proof}
Rearranging equation~\eqref{eq:pq_AB_rel_1} from the preceding proposition by applying an appropriate inverse 
and applying Proposition~\ref{prop:basic_properties}(1) to compute the necessary determinant gives
\begin{equation}
\left[\begin{array}{cc}A^{(m-1)}_{n-m}& A^{(m)}_{n-m}\\ A^{(m)}_{n-m-1}&A^{(m+1)}_{n-m-1}\end{array}\right]
\;=\;
(-1)^{n-m-1}
\left[\begin{array}{cc}q_{n-m-1}&-p_{n-m-1}\\ -q_{n-m-2}&p_{n-m-2}\end{array}\right]
\left[\begin{array}{cc}p_{n-1}&p_{n}\\ q_{n-1}&q_{n}\end{array}\right] \ .
\end{equation}
Taking the trace of both sides now gives the equation~\eqref{eq:trace_AB_pq}, as required.
\end{proof}

%
\subsubsection{Quadratic Irrationals.}\label{subsubsect:Quad_Irr}
Let $\theta\in[0,1]\setminus\mathbb{Q}$ be a quadratic irrational.
By this we will mean that $\theta$ is an algebraic number 
with minimal polynomial $\chi$ with (strict) degree two.
A theorem of Lagrange~\cite[p. 56]{KhinchinBook} implies 
that $\theta$ has a pre-periodic simple continued fraction expansion.
Hence, there exist positive integers 
$a_1,a_2,\ldots,a_\kay,\ldots,a_{\kay+\ell}$ 
such that
\begin{equation}
\theta\;=\;[a_1,a_2,\ldots,a_{\kay},\overline{a_{\kay+1},\ldots,a_{\kay+\ell}}] \ .
\end{equation}
We call the minimal such $\ell$ the {\it period}. 
We call any such $\kay$ a {\it pre-period} and the least such pre-period the 
{\it minimal pre-period} of the simple continued fraction expansion.
\begin{remark}
Observe that, since $a_{n+\ell}=a_{n}$ whenever $n>\kay$, it follows from~\eqref{eq:AB_definition} 
that, for all non-negative integers $m$ and $n$ satisfying $n-m>\kay$, we have
\begin{equation}
A^{(m)}_{n-m+\ell-1}\;=\;A^{(m)}_{n-m-1} \ , \qquad B^{(m)}_{n-m+\ell-1}\;=\;B^{(m)}_{n-m-1} \ .
\end{equation}
\begin{comment}
This follows as
\begin{align}
\left[\begin{array}{cc}B^{(m)}_{n-m-1}&B^{(m+1)}_{n-m-1}\\ A^{(m)}_{n-m-1}&A^{(m)}_{n-m-1}\end{array}\right]
&=
\left[\begin{array}{cc}0&1\\ 1&a_{n-m}\end{array}\right]
\left[\begin{array}{cc}0&1\\ 1&a_{n-m+1}\end{array}\right]
\cdots
\left[\begin{array}{cc}0&1\\ 1&a_{n}\end{array}\right]\\
&=
\left[\begin{array}{cc}0&1\\ 1&a_{n-m+\ell}\end{array}\right]
\left[\begin{array}{cc}0&1\\ 1&a_{n-m+1+\ell}\end{array}\right]
\cdots
\left[\begin{array}{cc}0&1\\ 1&a_{n+\ell}\end{array}\right]\\
&=
\left[\begin{array}{cc}B^{(m)}_{n+\ell-m-1}&B^{(m+1)}_{n+\ell-m-1}\\ A^{(m)}_{n+\ell-m-1}&A^{(m)}_{n+\ell-m-1}\end{array}\right]
\end{align}
\end{comment}
Therefore the following quantities are well-defined
\begin{equation}\label{eq:AB_periodic}
A^{(m)}_{(n)}\;=\;A^{(m)}_{\kay+j} \ , \qquad 
B^{(m)}_{(n)}\;=\;B^{(m)}_{\kay+j} \ , \qquad \mbox{for any} \ j\geq 0, \ \kay+j=n\mod \ell \ .
\end{equation}
\end{remark}
Let
$\theta^{\caret \kay}=T^{\kay}(\theta)$, 
where $T$ denotes the Gauss transformation on the interval $[0,1]$. 
Then $T^\ell(\theta^{\caret \kay})=\theta^{\caret \kay}$.
Thus $\theta^{\caret \kay}$ is a solution to the equation
\begin{equation}
\theta^{\caret \kay}
\;=\;
\frac{1}{a_{\kay+1}+}\frac{1}{a_{\kay+2}+}\cdots\frac{1}{a_{\kay+\ell}+\theta^{\caret \kay}}
\end{equation}
and $\theta$ can be expressed as
\begin{equation}
\theta
\;=\;
\frac{1}{a_{1}+}\frac{1}{a_{2}+}\cdots\frac{1}{a_{\kay}+\theta^{\caret \kay}} \ .
\end{equation}
These can be written in matrix form as
\begin{equation}
\left[\begin{array}{c}\theta^{\caret \kay}\\ 1\end{array}\right]
\;=\;
\left[\begin{array}{cc}0&1\\ 1&a_{\kay+1}\end{array}\right]
\left[\begin{array}{cc}0&1\\ 1&a_{\kay+2}\end{array}\right]
\cdots
\left[\begin{array}{cc}0&1\\ 1&a_{\kay+\ell}\end{array}\right]
\left[\begin{array}{c}\theta^{\caret \kay}\\ 1\end{array}\right]
\end{equation}
and
\begin{equation}
\left[\begin{array}{c}\theta\\ 1\end{array}\right]
\;=\;
\left[\begin{array}{cc}0&1\\ 1&a_{1}\end{array}\right]
\left[\begin{array}{cc}0&1\\ 1&a_{2}\end{array}\right]
\cdots
\left[\begin{array}{cc}0&1\\ 1&a_{\kay}\end{array}\right]
\left[\begin{array}{c}\theta^{\caret \kay}\\ 1\end{array}\right] \ ,
\end{equation}
where we identify matrices with their corresponding linear fractional transformations.
Let 
\begin{equation}
M_1
\;=\;
\left[\begin{array}{cc}0&1\\ 1&a_{\kay+1}\end{array}\right]
\left[\begin{array}{cc}0&1\\ 1&a_{\kay+2}\end{array}\right]
\cdots
\left[\begin{array}{cc}0&1\\ 1&a_{\kay+\ell}\end{array}\right]
\end{equation}
and
\begin{equation}
M_0
\;=\;
\left[\begin{array}{cc}0&1\\ 1&a_{1}\end{array}\right]
\left[\begin{array}{cc}0&1\\ 1&a_{2}\end{array}\right]
\cdots
\left[\begin{array}{cc}0&1\\ 1&a_{\kay}\end{array}\right] \ .
\end{equation}
Let $M_\theta=M_0 M_1 M_0^{-1}$.
We call this the element of $\mathrm{PSL}(2,\mathbb{Z})$ 
{\it corresponding to continued fraction expansion} of $\theta$.
We call $M_1$ the element of $\mathrm{PSL}(2,\mathbb{Z})$ 
{\it corresponding to the periodic part of the continued fraction expansion} of $\theta$.
Then 
$\theta$ is a solution of the fixed point equation $M_\theta(z)=z$,
$\theta^{\caret\kay}$ is a solution of the fixed point equation $M_1(z)=z$,
and 
$\theta=M_0(\theta^{\caret\kay})$.
From 
equation~\eqref{eq:AB_definition}, 
equation~\eqref{eq:AB_recurrence_rel}, 
and applying 
definition~\eqref{eq:AB_periodic} 
we have
\begin{equation}\label{eq:a1a2...al}
M_1
\;=\;
\left[\begin{array}{cc}
B^{(\ell-1)}_{(\kay)} & B^{(\ell)}_{(\kay)}\\A^{(\ell-1)}_{(\kay)} & A^{(\ell)}_{(\kay)}
\end{array}\right]
\;=\;
\left[\begin{array}{cc}
A^{(\ell-2)}_{(\kay+1)} & A^{(\ell-1)}_{(\kay+1)}\\A^{(\ell-1)}_{(\kay)} & A^{(\ell)}_{(\kay)}
\end{array}\right]
\end{equation}
and
\begin{equation}
M_0
\;=\;
\left[\begin{array}{cc}
B^{(\kay-1)}_{(0)} & B^{(\kay)}_{(0)}\\A^{(\kay-1)}_{(0)} & A^{(\kay)}_{(0)}
\end{array}\right]
\;=\;
\left[\begin{array}{cc}
A^{(\kay-2)}_{(1)} & A^{(\kay-1)}_{(1)}\\A^{(\kay-1)}_{(0)} & A^{(\kay)}_{(0)}
\end{array}\right] \ .
\end{equation}
To $M_1$, or equivalently to $\theta^{\caret\kay}$, 
there are two degree two polynomials which are naturally associated with it:
\begin{align}
\chi_{\theta^{\caret\kay}}(z)
&=
z^2-\left( A^{(\ell-2)}_{(\kay+1)}+A^{(\ell)}_{(\kay)} \right)z+(-1)^\ell\\
\omega_{\theta^{\caret\kay}}(z)
&=
A^{(\ell-1)}_{(\kay)}z^2 +\left(A^{(\ell)}_{(\kay)}-A^{(\ell-2)}_{(\kay+1)}\right)z - A^{(\ell-1)}_{(\kay+1)} \ .
\end{align}
The first is just the characteristic polynomial of $M_1$, 
and thus the roots are the eigenvalues of $M_1$. 
(Observe that $\det M_1=(-1)^\ell$ as $M_1$ is a product of $\ell$ matrices with determinant $-1$.)
The second is the minimal polynomial of $\theta^{\caret\kay}$, 
and thus the roots correspond to the eigenvectors of $M_1$.
Notice that these two polynomials have the same discriminant, and thus the roots are rationally related.

In the same way we may associate a pair of degree two polynomials $\chi_\theta$ and $\omega_\theta$ to $M_\theta$ 
(as the defining polynomials for eigenvalues and eigenvectors).
Since the determinant and trace are conjugacy invariant we find that 
\begin{equation}
\chi_{\theta}(z)\;=\;\chi_{\theta^{\caret\kay}}(z) \ .
\end{equation}
Also observe that the roots of $\omega_\theta$ are the image under $M_0$ of the roots of $\omega_{\theta^{\caret\kay}}$.
Thus it follows 
(noticing that 
$\left(A^{(\kay-2)}_{(1)}-A^{(\kay-1)}_{(0)}z\right)$ 
is the denominator of $M_0^{-1}$) 
that
\begin{equation}
\omega_{\theta}(z)\;=\;\left(A^{(\kay-2)}_{(1)}-A^{(\kay-1)}_{(0)}z\right)^2\omega_{\theta^{\caret\kay}}(M_0^{-1}(z)) \ .
\end{equation}
\section{Generating functions.}\label{sect:Gen_fn}
\subsection{Generating functions associated to a simple continued fraction expansion.}\label{subsect:Gen_fn-ctd_frac}
Given $\theta\in[0,1]\setminus\mathbb{Q}$, denote by $p_n/q_n$ the $n$-th best rational approximant.
Consider the generating functions
\begin{equation}
F(z)\;=\;\sum_{n\geq 0}p_nz^n 
\qquad \mbox{and} \qquad 
G(z)\;=\;\sum_{n\geq 0}q_nz^n \ .
\end{equation}
The aim of this section is to prove Theorem~\ref{thm:main-gen_fn_rat}.
Assume that $\theta$ is a quadratic irrational.
Then, for some positive integers $a_1,a_2,\ldots,a_{\kay+\ell}\in\mathbb{N}$,
\begin{equation}
\theta\;=\;\left[a_1,a_2,\ldots,a_\kay,\overline{a_{\kay+1},\ldots,a_{\kay+\ell}}\right] \ .
\end{equation}
In the first part of our construction $\ell$ may be an arbitrary period, 
$\kay$ may be an arbitrary pre-period.
However, the second part will require $\ell$ to be the minimal period 
and $\kay$ to {\it not} be the minimal pre-period of the continued fraction expansion
(though this is only really necessary in the periodic case, {\it i.e.} when $\kay=0$).
Let us set up the following notation.
\noindent
For $m\in\mathbb{N}$, define
\begin{equation}
I_m(z)
\;=\;
\sum_{0\leq j<m}p_jz^j \ , 
\qquad 
J_m(z)
\;=\;
\sum_{0\leq j<m}q_jz^j \ ,
\end{equation}
and also define $I_0(z)=0=J_0(z)$.
For $n\in\mathbb{N}_0$, define
\begin{equation}\label{def:Fn+Gn}
F_{n}(z)
\;=\;
\sum_{r\in\mathbb{N}_0}p_{n+r\ell}z^{n+r\ell} 
\qquad \mbox{and} \qquad
G_{n}(z)
\;=\;
\sum_{r\in\mathbb{N}_0}q_{n+r\ell}z^{n+r\ell} \ .
\end{equation}
First consider the generating function $F(z)$.
Observe that
\begin{equation}\label{eq:F_splitting-k=0}
F(z)
\;=\;
I_{\kay}(z)+\sum_{\kay\leq n<\kay+\ell}F_{n}(z) \ .
\end{equation}
More generally, for any $m\in\mathbb{N}_0$, 
we may split the generating function $F(z)$ as
\begin{align}
F(z)
&\;=\;
I_{\kay+m+1}(z)
+
\sum_{\kay\leq n<\kay+\ell}
F_{m+1+n}(z) \ .
\label{eq:F_splitting}
\end{align}
We will now use the recurrence relations~\eqref{eq:AB_expansion} 
to express the right-most summation in terms of 
$F_\kay, F_{\kay+1},\ldots,F_{\kay+\ell-1}$.
Fix an integer
$m$, $1\leq m<\ell+1$, and take
$n\in\mathbb{N}_0$.
Applying the recurrence relations~\eqref{eq:AB_expansion} 
(which we can do provided that $m\geq 1$)
followed by the periodicity property~\eqref{eq:AB_periodic}
\begin{align}
F_{m+1+n}(z)
&=
\sum_{r\in\mathbb{N}_0} p_{m+1+n+r\ell} \, z^{m+1+n+r\ell}\\
&=
\sum_{r\in\mathbb{N}_0}
\left(A^{(m)}_{n+1+r\ell} \, p_{n+1+r\ell}+B^{(m)}_{n+1+r\ell} \, p_{n+r\ell}\right)z^{m+1+n+r\ell}\\
&=
A^{(m)}_{(n+1)}z^{m}
F_{n+1}(z)
+
B^{(m)}_{(n+1)}z^{m+1}
F_n(z) \ .
\end{align}
This relation holds for arbitrary $n\in\mathbb{N}_0$.
Now restrict to the case $\kay\leq n<\kay+\ell$.
All the $F_n$'s in the above summation are one of the
$F_{\kay}, F_{\kay+1},\ldots,F_{\kay+\ell-1}$, 
except for $F_{\kay+\ell}$ which satisfies 
$F_{\kay+\ell}(z)=F_{\kay}(z)-p_\kay z^\kay$.
Hence equation~\eqref{eq:F_splitting} becomes
\begin{align}
F(z)
&\;=\;
I_{\kay+m+1}(z)
-A^{(m)}_{(\kay+\ell)}p_{\kay} z^{\kay+m}\notag\\
& \ \ 
+\sum_{\kay\leq n<\kay+\ell}
\left(A^{(m)}_{(n+1)}z^m F_{n+1}(z)+ B^{(m)}_{(m+1)}z^{m+1} F_m(z)\right) \ .
\end{align}
By~\eqref{eq:AB_periodic}, 
$A^{(m)}_{(n+1)}=A^{(m)}_{(n+\ell+1)}$ and 
$B^{(m)}_{(n+1)}=B^{(m)}_{(n+\ell+1)}$,
so the expression on the right-hand side of the 
equation above may be rewritten to give
\begin{align}
F(z)&\;=\;
I_{\kay+m+1}(z)
-A^{(m)}_{(\kay+\ell)}p_{\kay}z^{\kay+m}\notag\\
& \ \ 
+\sum_{\kay\leq n<\kay+\ell}
\left(A^{(m)}_{(n+\ell)}z^m+B^{(m)}_{(n+\ell+1)}z^{\kay+1}\right)F_n(z) \ .
\end{align}
For $1\leq m<\ell+1$, substituting into the 
expression~\eqref{eq:F_splitting-k=0} and rearranging therefore gives
\begin{align}\label{eq:F_relation}
&
\sum_{\kay\leq n<\kay+\ell}
\left(1-A^{(m)}_{(n+\ell)}z^m-B^{(m)}_{(n+\ell+1)}z^{m+1}\right)
F_n(z)\notag\\
&\;=\;
I_{\kay+m+1}(z)-I_{\kay}(z)-A^{(m)}_{(\kay+\ell)}p_{\kay}z^{\kay+m} \ .
\end{align}
For $1\leq m<\ell+1$, exactly the same argument as for $F(z)$ also yields the following relation for $G(z)$,
\begin{align}\label{eq:G_relation}
&
\sum_{\kay\leq n<\kay+\ell}
\left(1-A^{(m)}_{(n+\ell)}z^m-B^{(m)}_{(n+\ell+1)}z^{m+1}\right)
G_n(z)\notag\\
&\;=\;
I_{\kay+m+1}(z)-I_{\kay}(z)-A^{(m)}_{(\kay+\ell)}q_{\kay}z^{\kay+m} \ .
\end{align}
(In fact, the generating function of any sequence satisfying 
the recurrence relations~\eqref{eq:recurrence_rel_p} 
will satisfy an expression of the above type with appropriately chosen initial conditions.)
Let
\begin{equation}
\F(z)\;=\;\left[\begin{array}{c}F_{\kay}(z)\\ F_{\kay+1}(z)\\ \vdots\\ F_{\kay+\ell-1}(z)\end{array}\right] \qquad \mbox{and} \qquad
\G(z)\;=\;\left[\begin{array}{c}G_{\kay}(z)\\ G_{\kay+1}(z)\\ \vdots\\ G_{\kay+\ell-1}(z)\end{array}\right]
\end{equation}
and define
\begin{equation}
\begin{gathered}
\I(z)\;=\;\left[\begin{array}{c}I_{\kay+2}(z)-I_{\kay}(z)\\ I_{\kay+3}(z)-I_{\kay}(z)\\ \vdots\\ I_{\kay+\ell+1}(z)-I_{\kay}(z)\end{array}\right] \ , \qquad
\J(z)\;=\;\left[\begin{array}{c}J_{\kay+2}(z)-J_{\kay}(z)\\ J_{\kay+3}(z)-J_{\kay}(z)\\ \vdots\\ J_{\kay+\ell+1}(z)-J_{\kay}(z)\end{array}\right] \ , \\[5pt]
\K(z)\;=\;\left[\begin{array}{c}A^{(1)}_{(\kay+\ell)}z^{\kay+1}\\ A^{(2)}_{(\kay+\ell)}z^{\kay+2}\\ \vdots\\ A^{(\ell)}_{(\kay+\ell)}z^{\kay+\ell} \end{array}\right]
\ .
\end{gathered}
\end{equation}
Setting
\begin{equation}
\L(z)\;=\;
\left[\begin{array}{cccc}
L_{11}(z) & L_{12}(z) & \cdots & L_{1\ell}(z)\\ 
L_{21}(z) & L_{22}(z) &        & \vdots \\ 
\vdots    &           & \ddots &\vdots \\ 
L_{\ell1}(z)&\cdots &\cdots & L_{\ell\ell}(z)
\end{array}\right] \ ,
\end{equation}
where
\begin{equation}
L_{mn}(z)
\;=\;
1-A^{(m)}_{(\kay+\ell+n-1)}z^m-B^{(m)}_{(\kay+\ell+n)}z^{m+1} \ ,
\end{equation}
the expressions~\eqref{eq:F_relation} and~\eqref{eq:G_relation} above respectively become
\begin{equation}\label{eq:F+G_relations-mx_form}
\L(z) \F(z)\;=\;\I(z)-p_{\kay} \K(z) \qquad \mbox{and} \qquad
\L(z) \G(z)\;=\;\J(z)-q_{\kay} \K(z) \ .
\end{equation}
We wish to solve the above equations~\eqref{eq:F+G_relations-mx_form} for $\F(z)$ and $\G(z)$. 
Let
\begin{equation}
\M(z)
\;=\;
\left[\begin{array}{cccccc}
1       & 0      & \cdots & \cdots &\cdots & 0\\
-z^{-1} & z^{-1} & 0      & \cdots &\cdots & 0\\
0       & -z^{-2} & z^{-2}& 0      &       & 0\\
\vdots  &         & \ddots& \ddots &\ddots & \vdots\\
\vdots  &         &       & \ddots &\ddots & 0\\
0       & \cdots &        & 0      & -z^{-\ell+1} & z^{-\ell+1} 
\end{array}\right] \ .
\end{equation}
To simplify, apply $\M(z)$ to both sides of~\eqref{eq:F+G_relations-mx_form}.
After recalling (cf. equations~\eqref{eq:AB_recurrence_rel}) that we have the initial conditions 
$B^{(1)}_{(n)}=1$ and $A^{(1)}_{(n)}=a_{n+1}$, 
$a_{\kay+n+\ell}=a_{\kay+n}$ for $1\leq n<\ell+1$, 
and we also have the relations:
\begin{align}
B^{(m)}_{(\kay+n+1)}&=A^{(m-1)}_{(\kay+n+2)}\\
A^{(m)}_{(\kay+n)}&=B^{(m-1)}_{(\kay+n+1)}+A^{(1)}_{(\kay+n)}A^{(m-1)}_{(\kay+n+1)}=A^{(m-2)}_{(\kay+n+2)}+a_{\kay+n+1}A^{(m-1)}_{(\kay+n+1)}
\end{align}
we find that the matrix $\N(z)=\M(z)\L(z)$ can be expressed
in column form as
\begin{equation}\label{eq:O-col_form}
\begin{gathered}
\left[\begin{array}{cccc}
\A_\ell-a_{\kay+1}z \A_1-z^2 \A_2 & \A_1-a_{\kay+2}z \A_2-z^2 \A_3 &\cdots & \A_{\ell-1}-a_{\kay+\ell} z\A_\ell-z^2 \A_1\\
\end{array}\right]\\
\;=\;
\left[\begin{array}{cccc}
\A_\ell^{0}+\A_1^{1}+\A_2^{2} & \A_1^{0}+\A_2^{1}+\A_3^{2} &\cdots & \A_{\ell-1}^{0}+\A_{\ell}^{1}+\A_{1}^{2}\\
\end{array}\right]
\end{gathered}
\end{equation}
where, throughout, addition in lower indices is taken modulo $\ell$ and
\begin{equation}
\A_k^{0}\;=\;
\A_k\;=\;
\left[\begin{array}{c}
1\\ A^{(1)}_{(\kay+n)}\\ A^{(2)}_{(\kay+n)}\\ \vdots\\ A^{(\ell-1)}_{(\kay+n)} 
\end{array}\right] \ ,
\ \qquad
\A^{1}_{n}\;=\;-a_{\kay+n} z \A_{k} \ ,
\ \qquad
\A^{2}_{n}\;=\;-z^2 \A_{n} \ .
\end{equation}
Thus we have an expression for $\M(z)$ applied to the left-hand side of~\eqref{eq:F+G_relations-mx_form}.
A direct calculation gives an expression for the right-hand side.
Namely
\begin{comment}
\begin{equation}
\begin{gathered}
\M(z)\I(z)
\;=\;
p_\kay z^\kay\left[\begin{array}{c}1\\0\\\vdots\\0\end{array}\right]
+z^{\kay+1}\left[\begin{array}{c}p_{\kay+1}\\ p_{\kay+2}\\\vdots\\ p_{\kay+\ell}\end{array}\right]\quad
\M(z)\J(z)
\;=\;
q_\kay z^\kay\left[\begin{array}{c}1\\0\\\vdots\\0\end{array}\right]
+z^{\kay+1}\left[\begin{array}{c}q_{\kay+1}\\ q_{\kay+2}\\\vdots\\ q_{\kay+\ell}\end{array}\right]\\
\M(z)\K(z)
=
z^{\kay+1}
\left[\begin{array}{c}A^{(1)}_{(\kay+\ell)}\\A^{(2)}_{(\kay+\ell)}\\\vdots\\A^{(\ell)}_{(\kay)}\end{array}\right]
-z^\kay
\left[\begin{array}{c}0\\A^{(1)}_{(\kay+\ell)}\\\vdots\\A^{(\ell-1)}_{(\kay)}\end{array}\right]\\
\end{gathered}
\end{equation}
Hence
\end{comment}
\begin{align}
\M(z)\left(\I(z)-p_\kay \K(z)\right)
&\;=\;
p_\kay z^\kay \left[\begin{array}{c}1\\A^{(1)}_{(\kay+\ell)}\\\vdots\\A^{(\ell-1)}_{(\kay+\ell)}\end{array}\right]
+z^{\kay+1} \left[\begin{array}{c}p_{\kay+1}-p_\kay A^{(1)}_{(\kay+\ell)}\\p_{\kay+2}-p_\kay A^{(2)}_{(\kay+\ell)}\\\vdots\\p_{\kay+\ell}-p_\kay A^{(\ell)}_{(\kay+\ell)}\end{array}\right]\label{eq:M(I-pK)}
\end{align}
and
\begin{align}
\M(z)\left(\J(z)-p_\kay \K(z)\right)
&\;=\;
q_\kay z^\kay \left[\begin{array}{c}1\\A^{(1)}_{(\kay+\ell)}\\\vdots\\A^{(\ell-1)}_{(\kay+\ell)}\end{array}\right]
+z^{\kay+1} \left[\begin{array}{c}q_{\kay+1}-q_\kay A^{(1)}_{(\kay+\ell)}\\q_{\kay+2}-q_\kay A^{(2)}_{(\kay+\ell)}\\\vdots\\q_{\kay+\ell}-q_\kay A^{(\ell)}_{(\kay+\ell)}\end{array}\right] \ .\label{eq:M(J-pK)}
\end{align}
Consequently, applying $\M(z)$ to both sides of the equations~\eqref{eq:F+G_relations-mx_form} gives
\begin{align}\label{eq:F+G_relations-mx-form-2}
\N(z)\F(z)\;=\;p_\kay z^\kay \A_\ell+z^{\kay+1}\U \ , \qquad
\N(z)\G(z)\;=\;q_\kay z^\kay \A_\ell+z^{\kay+1}\V \ .
\end{align}
where $\U$ and $\V$ are the right-most vectors in~\eqref{eq:M(I-pK)} and~\eqref{eq:M(J-pK)} respectively.
\begin{remark}\label{rmk:important-indexing}
The above analysis works for {\it any} pre-period $\kay$, not just the minimal pre-period.
However, to achieve a succinct expression for the generating functions, avoiding a case analysis,
we will see below that it is better to choose {\it any} other pre-period. 
We will draw attention to the two places in the proof below where this is necessary.
\end{remark}
With the above remark in mind, a key ingredient is the following proposition.
\begin{proposition}
Provided that $\kay$ is non-zero, 
$\U=p_{\kay-1}\A_1$ and $\V=q_{\kay-1}\A_1$. 
Consequently
the equations~\eqref{eq:F+G_relations-mx-form-2} may be expressed as
\begin{equation}\label{eq:F+G_relations-mx-form-3}
\N(z)\F(z)
\;=\;p_\kay z^\kay\A_\ell+p_{\kay-1}z^{\kay+1}\A_1 \ , \qquad
\N(z)\G(z)
\;=\;q_\kay z^\kay\A_\ell+q_{\kay-1}z^{\kay+1}\A_1 \ .
\end{equation}
\end{proposition}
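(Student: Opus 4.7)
The plan is to reduce the two claimed identities to the splitting relations of Proposition~\ref{prop:pA&qA_splitting_rel}, combined with the periodicity convention~\eqref{eq:AB_periodic}. Observe that the $m$-th component of $\U$ (for $1\leq m\leq \ell$) is $U_m = p_{\kay+m}-p_\kay A^{(m)}_{(\kay+\ell)}$, while the $m$-th component of $p_{\kay-1}\A_1$ is $p_{\kay-1}A^{(m-1)}_{(\kay+1)}$, using the convention $A^{(0)}_{(\kay+1)}=1$ that matches the first entry of $\A_1$. So the only thing to prove is the scalar identity
\begin{equation*}
p_{\kay+m} \;=\; p_\kay A^{(m)}_{(\kay+\ell)} + p_{\kay-1} A^{(m-1)}_{(\kay+1)}, \qquad 1\leq m\leq \ell,
\end{equation*}
together with the $q$-analogue obtained by replacing $p$ with $q$ throughout.

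To establish the displayed identity I would first apply equation~\eqref{eq:pA_splitting_rel_1} with the parameters $(n,m)$ there replaced by $(\kay, m-1)$, obtaining $p_{\kay+m} = p_\kay A^{(m)}_\kay + p_{\kay-1} A^{(m-1)}_{\kay+1}$. This is precisely the step that forces $\kay\geq 1$, which explains why the minimal pre-period (in particular the purely periodic case $\kay=0$) has to be avoided, in agreement with Remark~\ref{rmk:important-indexing}. Next, I would invoke the periodicity identities $A^{(m)}_\kay = A^{(m)}_{(\kay+\ell)}$ and $A^{(m-1)}_{\kay+1}=A^{(m-1)}_{(\kay+1)}$, both of which follow directly from the remark preceding~\eqref{eq:AB_periodic}. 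This yields the displayed identity, hence $\U = p_{\kay-1}\A_1$. The argument for $\V$ is formally identical, replacing~\eqref{eq:pA_splitting_rel_1} with its $q$-analogue~\eqref{eq:qA_splitting_rel_1}. Substituting these simplifications into~\eqref{eq:F+G_relations-mx-form-2} immediately delivers~\eqref{eq:F+G_relations-mx-form-3}.

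I do not anticipate any substantive obstacle, since the combinatorial heavy lifting has already been done in Section~\ref{subsect:best_approx}. The main thing to be careful about will be the bookkeeping between the subscript convention $A^{(m)}_n$ with integer $n$ used in Proposition~\ref{prop:pA&qA_splitting_rel} and the periodic-index convention $A^{(m)}_{(n)}$ used in defining the $\A_n$; once that translation is made, the proposition drops out as a one-line consequence of the splitting relation together with periodicity.
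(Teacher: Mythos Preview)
Your proposal is correct and follows essentially the same route as the paper: both reduce the identity $\U=p_{\kay-1}\A_1$ to the scalar relation $p_{\kay+m}=p_\kay A^{(m)}_{(\kay)}+p_{\kay-1}A^{(m-1)}_{(\kay+1)}$, and then invoke periodicity~\eqref{eq:AB_periodic}. The only cosmetic difference is that the paper cites~\eqref{eq:AB_expansion} and~\eqref{eq:AB_recurrence_rel} directly, whereas you cite their packaged form~\eqref{eq:pA_splitting_rel_1} from Proposition~\ref{prop:pA&qA_splitting_rel}; since the latter is itself proved from the former, the two arguments are the same.
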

\begin{proof}
Provided that $\kay$ is non-zero, 
$A^{(m)}_{(\kay)}$ is defined for $m\in\mathbb{N}_0$. 
Moreover, it satisfies $A^{(m)}_{(\kay+\ell)}=A^{(m)}_{(\kay)}$ and 
by~\eqref{eq:AB_expansion} and~\eqref{eq:AB_recurrence_rel}
we have the equalities
\begin{equation}
p_{\kay+m}-p_\kay A^{(m)}_{(\kay+\ell)}
=p_{\kay-1} A^{(m-1)}_{(\kay+1)} \ ,
\qquad
q_{\kay+m}-q_\kay A^{(m)}_{(\kay+\ell)}
=q_{\kay-1} A^{(m-1)}_{(\kay+1)} \ .
\end{equation}
Hence, for $\kay$ non-zero, equations~\eqref{eq:M(I-pK)} and~\eqref{eq:M(J-pK)} become respectively:
\begin{align}
\M(z)\left(\I(z)-p_\kay\K(z)\right)
&=p_\kay z^\kay\A_\ell+p_{\kay-1}z^{\kay+1}\A_1\\
\M(z)\left(\J(z)-q_\kay\K(z)\right)
&=q_\kay z^\kay\A_\ell+q_{\kay-1}z^{\kay+1}\A_1 \ ,
\end{align}
from which the proposition follows immediately.
\end{proof}
Next, define the $\ell\times \ell$ matrix
\begin{equation}
\A=\left[\begin{array}{cccc} \A_1 & \A_2 & \cdots & \A_\ell\end{array}\right] \ .
\end{equation}
We will need the following preliminary result, which follows from a straightforward proof by induction.
\begin{proposition}\label{prop:detA_nonzero}
Let the matrix $\A$ be defined as above.
Then $\det \A=0$ if and only if the sequence 
$a_{\kay+1},a_{\kay+2},\ldots,a_{\kay+\ell}$ 
has period strictly less than $\ell$.
\end{proposition}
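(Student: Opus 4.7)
The direction $(\Leftarrow)$ is immediate from the cyclic convention~\eqref{eq:AB_periodic}: if the block $a_{\kay+1},\ldots,a_{\kay+\ell}$ admits a period $d$ with $d\mid\ell$ and $d<\ell$, then $A^{(m)}_{(\kay+k+d)}=A^{(m)}_{(\kay+k)}$ for every $m$ and $k$. Hence $\A_{k+d}=\A_{k}$, so $\A$ has repeated columns and $\det\A=0$.

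For the converse I plan to argue by strong induction on $\ell$, with the base case $\ell=1$ trivial since $\A=[1]$. For the inductive step, the key observation is that each column $\A_k$ consists of the first $\ell$ continuant denominators of the cyclically shifted continued fraction $[a_{\kay+k+1},a_{\kay+k+2},\ldots]$; that is, $A^{(m)}_{(\kay+k)}=q_m^{(k)}$. By the recurrence~\eqref{eq:AB_splitting_rel_2},
\[
A^{(m)}_{(\kay+k)} \;=\; a_{\kay+k+m}A^{(m-1)}_{(\kay+k)} + A^{(m-2)}_{(\kay+k)},
\]
so the partial quotients $a_{\kay+k+1},\ldots,a_{\kay+k+\ell-1}$ can be recovered uniquely from the single column $\A_k$. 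In particular, the equality $\A_k=\A_{k'}$ with $k\neq k'\pmod\ell$ forces $a_j=a_{j+(k'-k)}$ for $\ell-1$ consecutive indices $j$; chasing the identity around the length-$\ell$ cycle using $a_{j+\ell}=a_j$ extends this to every $j$, producing a period strictly less than $\ell$. To upgrade distinctness of columns to linear independence, I would then perform the elementary row operations $R_m\mapsto R_m-R_{m-2}$ for $m=2,\ldots,\ell-1$ (which preserve $\det\A$); by the recurrence above, these replace the $(m,k)$-entry with $a_{\kay+k+m}A^{(m-1)}_{(\kay+k)}$, exposing a recursive structure through which $\det\A$ can be reduced to a determinant of the same type associated to a shorter cyclic block, to which the induction hypothesis applies.

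The main obstacle is that the row-reduction multiplier $a_{\kay+k+m}$ depends on the column index $k$, so the operation is not a single elementary left-multiplication on the whole matrix but rather a column-by-column scaling that cannot be absorbed into a uniform matrix factorization. The resulting inductive bookkeeping---confirming that the shorter block inherited at each stage retains its maximal cyclic period---is the technical heart of the argument. I expect the cleanest framework exploits the conjugation relation between the cyclic shifts $M_1^{(k)}$ of the periodic matrix $M_1$ discussed in Section~\ref{subsubsect:Quad_Irr}, which organizes the recursive reduction naturally and ensures that minimal period $\ell$ of the ambient sequence is transferred to the sub-blocks that appear in the induction.
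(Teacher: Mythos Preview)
The paper itself offers no proof beyond the single remark that the result ``follows from a straightforward proof by induction,'' so there is no detailed argument to compare against; your overall strategy (induction on $\ell$) is exactly what the paper gestures at.

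Your $(\Leftarrow)$ direction is complete and correct: a proper period $d\mid\ell$ forces $\A_{k}=\A_{k+d}$, hence $\det\A=0$. Your auxiliary observation for the converse---that equality of two columns $\A_k=\A_{k'}$ forces $a_j=a_{j+(k'-k)}$ for $\ell-1$ consecutive indices, and that chasing the shift around $\mathbb{Z}_\ell$ then forces a genuine cyclic period---is also correct and cleanly argued.

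However, as you yourself flag, this only handles the case where two columns coincide; it does not address general linear dependence, and your plan to ``upgrade'' distinctness to linear independence is not carried out. Concretely, the row operation $R_m\mapsto R_m-R_{m-2}$ produces entries $a_{\kay+n+m}A^{(m-1)}_{(\kay+n)}$ in which the scalar depends on the column $n$, so you cannot factor the resulting matrix as (diagonal)$\times$(smaller $\A$-type matrix) and invoke the induction hypothesis directly. You note this obstacle but do not overcome it: the ``recursive structure'' that would reduce $\det\A$ to a determinant of the same kind for a shorter cyclic block is asserted rather than exhibited, and the appeal to the conjugacy of the shifted matrices $M_1^{(k)}$ is suggestive but does not, as stated, produce the required factorisation. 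Until that reduction is written down and it is verified that the shorter block inherited at each stage still has maximal cyclic period, the $(\Rightarrow)$ direction remains a plan rather than a proof.
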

%
\begin{comment}
\begin{proof}
Postponed.
\end{proof}
\end{comment}
%
We will also need the following proposition.
However, before proceeding we recommend that the reader first consults Appendix~\ref{sect:matrix_comp}, 
which contains the solution to linear equations with polynomial coefficients of a more general type than those considered below.
\begin{proposition}\label{prop:NE=Ak}
Let $\N(z)$ and $\A_1,\A_2,\ldots,\A_\ell$ be as above.
For each $n\in \mathbb{Z}_\ell$ the equation
\begin{equation}\label{eq:NE=Ak}
\N(z)\E(z)\;=\;\A_n
\end{equation}
has solution $\E_n$ with entries $E_{m,n}$ given by
\begin{equation}\label{eq:E_jk}
E_{m,n}(z)\;=\;\frac{u_{m,n}(z)}{v(z^\ell)}
\end{equation}
where, if $\kappa=\kappa(m,n+1)$ and $\mu=\mu(m,n+1)$ are defined as in Theorem~\ref{thm:inhomog_eq-sol}, then
\begin{equation}\label{eq:E_jk-numerator}
u_{m,n}(z)
\;=\;
\left\{\begin{array}{ll}
z^{(\ell-1)-\kappa}A^{(\ell-\kappa-1)}_{(\kay+n)}+z^{(\ell-1)+\mu}(-1)^\mu A^{(\ell-\mu-1)}_{(\kay+m)} & m\neq n\\
z^{\ell-1}A^{(\ell-1)}_{(\kay+n)} & m=n
\end{array}\right.
\end{equation}
and
\begin{align}\label{eq:alpha_explicit}
v(\xi)
&\;=\;
1
-\left(A^{(\ell-2)}_{(\kay+\ell+1)}+A^{(\ell)}_{(\kay+\ell)}\right)\xi
+(-1)^\ell\xi^2 \ .
\end{align}
\end{proposition}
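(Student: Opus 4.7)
My plan has three stages.

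\textbf{Reduction to a cyclic scalar recurrence.} I would first convert $\N(z)\E(z)=\A_n$ into a scalar system using the column decomposition of $\N(z)$ in~\eqref{eq:O-col_form}. The $k$-th column of $\N(z)$ is $\A_{k-1}-a_{\kay+k}z\A_k-z^2\A_{k+1}$, with cyclic convention $\A_0=\A_\ell$, $\A_{\ell+1}=\A_1$. Writing $\E(z)=(E_{1,n}(z),\ldots,E_{\ell,n}(z))^T$ and expanding $\N(z)\E(z)$ in the basis $\A_1,\ldots,\A_\ell$ (which is a basis by Proposition~\ref{prop:detA_nonzero}, since $\ell$ is taken minimal), the equation~\eqref{eq:NE=Ak} becomes equivalent to the cyclic second-order linear system
\begin{equation*}
E_{k+1,n}(z)-z\,a_{\kay+k}\,E_{k,n}(z)-z^2 E_{k-1,n}(z)\;=\;\delta_{kn}\ ,\qquad k\in\mathbb{Z}_\ell.
\end{equation*}

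\textbf{Solving the cyclic recurrence.} Next I would invoke Theorem~\ref{thm:inhomog_eq-sol} from the appendix, which is designed precisely for cyclic inhomogeneous tridiagonal systems with polynomial coefficients. That theorem expresses the solution as a ratio $u_{m,n}(z)/v(z^\ell)$, where $v$ is (up to normalization) the characteristic polynomial of the one-period transfer-matrix product, and $u_{m,n}$ is a combination of the two fundamental homogeneous solutions that propagate from position $m$ to position $n$ along the two cyclic arcs of lengths $\kappa=\kappa(m,n+1)$ and $\mu=\mu(m,n+1)$.

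\textbf{Identifying the formulas.} The homogeneous solutions to $E_{k+1}=z\,a_{\kay+k}\,E_k+z^2 E_{k-1}$ can be read off directly from equation~\eqref{eq:AB_splitting_rel_2}: the recurrence $A^{(m+1)}_{s}=a_{s+m+1}A^{(m)}_{s}+A^{(m-1)}_{s}$ matches our homogeneous recurrence in $k$ once we fix the starting index $s=\kay+j$ and absorb a factor $z^k$ into $E_k$, using periodicity~\eqref{eq:AB_periodic}. This identifies the two pieces of the numerator in~\eqref{eq:E_jk-numerator} as the two fundamental solutions: $z^{(\ell-1)-\kappa}A^{(\ell-\kappa-1)}_{(\kay+n)}$ corresponds to propagation the ``short way'' (length $\kappa$) from $n$, while $z^{(\ell-1)+\mu}(-1)^\mu A^{(\ell-\mu-1)}_{(\kay+m)}$ corresponds to the complementary ``long way'' (length $\mu$), with the sign $(-1)^\mu$ arising because the two-term recurrence has Wronskian $-z^2$ iterated $\mu$ times. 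In the degenerate case $m=n$ the two arcs collapse and the numerator reduces to the single term $z^{\ell-1}A^{(\ell-1)}_{(\kay+n)}$. The denominator $v(z^\ell)=1-(A^{(\ell-2)}_{(\kay+\ell+1)}+A^{(\ell)}_{(\kay+\ell)})z^\ell+(-1)^\ell z^{2\ell}$ is precisely $\chi_{\theta^{\caret\kay}}(z^\ell)$, since by~\eqref{eq:a1a2...al} the trace and determinant of $M_1$ are $A^{(\ell-2)}_{(\kay+1)}+A^{(\ell)}_{(\kay)}$ and $(-1)^\ell$, which coincide with the coefficients in $v$ by periodicity.

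\textbf{Main obstacle.} The main difficulty is bookkeeping: matching the abstract indices $\kappa,\mu$ of Theorem~\ref{thm:inhomog_eq-sol} to the correct cyclic distances between $m$ and $n+1$ on $\mathbb{Z}_\ell$, and tracking the precise powers of $z$ absorbed at each step of the arc, so that the $A$-superscripts come out to $\ell-\kappa-1$ and $\ell-\mu-1$ respectively. Once the indexing is aligned, the statement reduces to a direct application of the appendix theorem combined with the $A$-recurrences~\eqref{eq:AB_splitting_rel_1}--\eqref{eq:AB_splitting_rel_2}.
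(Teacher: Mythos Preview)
Your plan is essentially the paper's proof: apply Theorem~\ref{thm:inhomog_eq-sol} with $\gamma_m^0=1$, $\gamma_m^1=-a_{\kay+m}$, $\gamma_m^2=-1$, $\C_m=\A_{m-1}$, $t=n+1$, and then identify the recurrences~\eqref{eq:kappa_recurrence_rel} and~\eqref{eq:mu_recurrence_rel} with~\eqref{eq:AB_splitting_rel_2} and~\eqref{eq:AB_splitting_rel_1} respectively, checking the initial conditions. Your intermediate scalar reformulation (expanding in the $\A_j$ basis to get the cyclic three-term recurrence) is a clean way to see what Theorem~\ref{thm:inhomog_eq-sol} is really doing, though the paper skips this and applies the appendix theorem directly to the matrix equation.

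One small slip: you write that $v(z^\ell)$ ``is precisely $\chi_{\theta^{\caret\kay}}(z^\ell)$''. It is not; $v(\xi)=1-\mathrm{tr}(M_1)\xi+(-1)^\ell\xi^2=\xi^2\chi_{\theta^{\caret\kay}}(1/\xi)$, the reciprocal polynomial (this is Corollary~\ref{cor:alpha_vs_chi}). This does not affect your argument, since you have the coefficients of $v$ right, but the identification with $\chi$ itself is off.
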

\begin{proof}
The matrix $\N(z)$ is expressible in the form~\eqref{eq:O-col_form} 
and, by Proposition~\ref{prop:detA_nonzero} above, $\det\A$ is non-zero.
Therefore we may apply Theorem~\ref{thm:inhomog_eq-sol}.
More precisely, for $m=1,2,\ldots,\ell$, if we take 
\begin{equation}\label{eq:gamma-rotn}
\gamma_m^0\;=\;1 \ , \qquad \gamma_m^1\;=\;-a_{\kay+m} \ , \qquad \gamma_m^2\;=\;-1 \ , 
\end{equation}
we take $\C_m=\A_{m-1}$, and
we set $t=n+1$, then Theorem~\ref{thm:inhomog_eq-sol} implies 
that the solution to~\eqref{eq:NE=Ak}, which we denote by $\E_n(z)$, exists and has entries $E_{m,n}(z)$, 
$m=1,2,\ldots,\ell$, given by
\begin{equation}
E_{m,n}(z)\;=\;\frac{u_{m,n}(z)}{v(z^\ell)}
\end{equation}
where $v(z)$ and $u_{m,n}(z)$ are polynomials in the variable $z$.
In fact, $u_{m,n}(z)$ is given by
\begin{equation}\label{eq:u_jk}
u_{m,n}(z)\;=\;\left\{
\begin{array}{ll}
z^{(\ell-1)-\kappa}u_{m,n+1}^{\kappa,0}+z^{(\ell-1)+\mu}u_{m,n+1}^{0,\mu} & m\neq n\\
z^{\ell-1}u_{n,n+1}^{0,0} & m=n
\end{array}
\right.
\end{equation}
where 
$\kappa=\kappa(m,n+1)$ and 
$\mu=\mu(m,n+1)$ 
are given by~\eqref{eq:kappa_def} and~\eqref{eq:mu_def}, 
and the coefficients 
$u_{m,n}^{\kappa,0}$ and 
$u_{m,n}^{0,\mu}$ 
satisfy the recurrence relations
~\eqref{eq:kappa_recurrence_rel} 
and~\eqref{eq:mu_recurrence_rel} respectively.
Observe that the initial values for the first recurrence satisfy
\begin{align}\label{eq:(kappa,0)-init_cond}
u_{n+1,n+1}^{\ell-1,0}
\;=\;A^{(0)}_{(\kay+n)} \ , 
\qquad
u_{n+2,n+1}^{\ell-2,0}
\;=\;A^{(1)}_{(\kay+n)} \ .
\end{align}
Observe also that the recurrence relation~\eqref{eq:kappa_recurrence_rel}, 
after making the substitutions in~\eqref{eq:gamma-rotn} above, 
becomes~\eqref{eq:AB_splitting_rel_2}.
Moreover, the initial conditions~\eqref{eq:(kappa,0)-init_cond} above
agree with the initial conditions given by~\eqref{eq:AB_splitting_rel_2}.
Consequently, for all $m=1,2,\ldots,\ell$, we have
\begin{equation}\label{eq:beta-kappa-rotn}
u_{m,n+1}^{\kappa,0}\;=\;A^{(\ell-\kappa-1)}_{(\kay+n)} \ .
\end{equation}
Next observe that the second recurrence has initial values 
\begin{align}\label{eq:(0,mu)-init_cond}
u_{n-1,n+1}^{0,\ell-1}
\;=\;(-1)^{\ell-1}A^{(0)}_{(\kay+n-1)} \ , 
\qquad 
u_{n-2,n+1}^{0,\ell-2}
\;=\;(-1)^{\ell-2}A^{(1)}_{(\kay+n-2)} \ ,
\end{align}
so the recurrence relation~\eqref{eq:mu_recurrence_rel}
together with these initial conditions~\eqref{eq:(0,mu)-init_cond} agree with the 
recurrence relation and initial conditions~\eqref{eq:AB_splitting_rel_1}.
Thus, for all $m=1,2,\ldots,\ell$, we have
\begin{equation}\label{eq:beta-mu-rotn}
u_{m,n+1}^{0,\mu}\;=\;(-1)^\mu A^{(\ell-\mu-1)}_{(\kay+m)} \ .
\end{equation}
(Note: here we have used the convention stated in Remark~\ref{rmk:important-indexing} above.)
From this we get the expression for the numerator of 
$E_{m,n}$ in terms of $A^{(\ell-\kappa-1)}_{(\kay+n)}$ and $A^{(\ell-\mu-1)}_{(\kay+m)}$ given by~\eqref{eq:E_jk-numerator}.

Now consider the denominator of $E_{m,n}$.
Theorem~\ref{thm:det-general} tells us that $v$ is of the form
\begin{equation}\label{eq:v_in_terms_of_v0v1v2}
v(\xi)\;=\;v_0+\xi v_1+\xi^2v_2
\end{equation}
where the coefficients are given by
\begin{equation}\label{eq:v0v1v2}
\begin{gathered}
v_0
\;=\;\prod_{1\leq s\leq \ell}\gamma_s^0
\;=\;1 \ ,
\qquad\qquad\qquad 
v_2
\;=\;\prod_{1\leq s\leq \ell}\gamma_s^2
\;=\;(-1)^\ell \ , \\[5pt]
v_1
\;=\;\sum_{\tau\in\mathcal{T}^+: \tau\not\equiv 0,2}
(-1)^{\sgn(\upsilon_\tau)}\prod_{1\leq s\leq \ell}\gamma_s^{\tau(s)} \ .
\end{gathered}
\end{equation}
The equalities~\eqref{eq:beta-kappa-rotn} and~\eqref{eq:beta-mu-rotn},
and the hypothesis~\eqref{eq:gamma-rotn} 
together with Proposition~\ref{prop:alpha_in_terms_of_beta}, in the case $s=\ell+1$,
(recalling that addition in lower indices is taken in $\mathbb{Z}_\ell$)
imply that
\begin{align}
v_1
&\;=\;
u_{\ell+1,\ell+1}^{0,1}+(-a_{\kay+\ell+1})u_{\ell+1,\ell+2}^{0,0}+(-1) u_{\ell+1,\ell+3}^{1,0}\\
&\;=\;
-A^{(\ell-2)}_{(\kay+\ell+1)}-a_{\kay+\ell+1}A^{(\ell-1)}_{(\kay+\ell+1)}-A^{(\ell-2)}_{(\kay+\ell+2)} \ .
\end{align}
Applying the recurrence relation~\eqref{eq:AB_splitting_rel_1}, 
we therefore find that
\begin{equation}\label{eq:v1_in_terms_of_A}
v_1\;=\;-A^{(\ell-2)}_{(\kay+\ell+1)}-A^{(\ell)}_{(\kay+\ell)} \ ,
\end{equation}
which proves~\eqref{eq:alpha_explicit}. Hence the proposition is shown.
\end{proof}
We are now in a position to give a proof of the main theorem (Theorem~\ref{thm:main-gen_fn_rat2}). 
\begin{proof}[Proof of Theorem~\ref{thm:main-gen_fn_rat2}]
Assume that $\kay\geq 2$.
By the analysis above, it suffices to solve the equations~\eqref{eq:F+G_relations-mx-form-3} for $\F(z)$ and $\G(z)$.
We will only consider the equation for $\F(z)$ as the argument for $\G(z)$ is totally analogous.

Applying Proposition~\ref{prop:NE=Ak} in the case $n=1$ and $n=\ell$ to the equation~\eqref{eq:F+G_relations-mx-form-3}
and then applying linearity gives
\begin{equation}
\F(z)
\;=\;
p_\kay z^\kay \E_\ell(z)+p_{\kay-1}z^{\kay+1}\E_1(z) \ .
\end{equation}
More specifically, for $m=1,2,\ldots,\ell$, using the equalities
~\eqref{eq:E_jk},
~\eqref{eq:E_jk-numerator}
and~\eqref{eq:alpha_explicit}
\begin{equation}
F_{\kay+m-1}(z)
\;=\;p_{\kay} z^{\kay} E_{m,\ell}(z)+p_{\kay-1}z^{\kay+1} E_{m,1}(z)
\;=\;\frac{z^\kay u_m(z)}{v(z^\ell)}
\end{equation}
where, after rearranging and collecting like terms, we have
\begin{align}
&
u_m(z)\;=\;\\
&
\left\{\begin{array}{ll}
z^{0}p_{\kay}A^{(0)}_{(\kay+\ell)}
+ z^{\ell}\left( (-1)^1p_{\kay}A^{(\ell-2)}_{(\kay+1)}+p_{\kay-1}A^{(\ell-1)}_{(\kay+1)} \right)
& m=1\\[10pt]
z^{m-1}\left(p_{\kay}A^{(m-1)}_{(\kay+\ell)}+p_{\kay-1}A^{(m-2)}_{(\kay+1)}\right)
&\\
 \ \ + z^{\ell+m-1}\left( (-1)^mp_{\kay}A^{(\ell-m-1)}_{(\kay+m)}+(-1)^{m-1}p_{\kay-1}A^{(\ell-m)}_{(\kay+m)}\right) 
& 2\leq m\leq\ell-1\\[10pt]
z^{\ell-1}\left(p_{\kay}A^{(\ell-1)}_{(\kay+\ell)}+p_{\kay-1}A^{(\ell-2)}_{(\kay+1)}\right)
+ z^{2\ell-1}(-1)^{\ell-1}p_{\kay-1}A^{(0)}_{(\kay+\ell)}
& m=\ell
\end{array}\right.\notag
\end{align}
and
\begin{equation}
v(\xi)\;=\;1-\left(A^{(\ell-2)}_{(\kay+\ell+1)}+A^{(\ell)}_{(\kay+\ell)}\right)\xi+(-1)^\ell\xi^2 \ .
\end{equation}
But by equations
~\eqref{eq:pA_splitting_rel_1} and~\eqref{eq:pA_splitting_rel_2} 
of Proposition~\ref{prop:pA&qA_splitting_rel}, 
together with the periodicity of $A^{(m)}_{(n)}$ 
in the lower index and the hypothesis that $\kay$ 
is non-zero and greater than or equal to $\ell$, we obtain
\begin{align}
p_{\kay}A^{(m-1)}_{(\kay+\ell)}+p_{\kay-1}A^{(m-2)}_{(\kay+1)}
&\;=\;
p_{\kay+m-1}\\ 
(-1)^m p_{\kay}A^{(\ell-m-1)}_{(\kay+m)}+(-1)^{m-1} p_{\kay-1}A^{(\ell-m)}_{(\kay+m)}
&\;=\;
(-1)^{\ell+1} p_{\kay-\ell+m-1} 
\end{align}
and therefore, for $1\leq m\leq \ell$,
\begin{align}
u_m(z)
\;=\;
z^{m-1}p_{\kay+m-1}
+z^{\ell+m-1}(-1)^{\ell+1}p_{\kay-\ell+m-1} \ .
\end{align}
(Observe that the exceptional expressions for 
$u_0$ and $u_\ell$ coincide with the general expression for $u_m$ when substituting $m=1$ and $m=\ell$ respectively.) 
By applying Proposition~\ref{prop:trace_AB_pq} we get the equivalent expression
\begin{align}
v_1
&\;=\;
-\left(A^{(\ell-2)}_{(\kay+1)}+A^{(\ell)}_{(\kay)}\right)\\
&\;=\;
-(-1)^{\kay}\left(
q_{\kay}p_{\kay+\ell-1}-p_{\kay}q_{\kay+\ell-1}-q_{\kay-1}p_{\kay+\ell}+p_{\kay-1}q_{\kay+\ell}
\right) \ .
\end{align}
Consequently, equation~\eqref{eq:Fn} holds. 
By exactly the same argument,
replacing where appropriate the $p_n$ with $q_n$, 
gives equation~\eqref{eq:Gn}. 
Thus the theorem is shown.
\end{proof}
\begin{eg}
Let $\theta=\frac{-1+\sqrt{5}}{2}$.
Then $\theta$ has simple continued fraction expansion $[1,1,1,\ldots]$.
Then $\ell=1$ and we take $\kay=2$.
Observe that
$\delta_\theta=1$.
Theorem~\ref{thm:main-gen_fn_rat2} then gives, using $p_0=0$, $p_1=1$ and $p_2=p+1+p_0$,
the following expression for $F(z)$:
\begin{align}
F(z)
\;=\;p_0+p_1 z+F_2(z)
&\;=\;p_0+p_1 z+\frac{p_2 z^2+(-1)^2 p_1 z^3}{1-z-z^2}\\
&\;=\;\frac{p_0+(p_1-p_0)z+(p_2-p_1-p_0)z^2}{1-z-z^2}\\
&\;=\;\frac{z}{1-z-z^2} \ .
\end{align}
Exactly the same argument, {\it mutatis mutandis}, using $q_0=1$, $q_1=1$ and $q_2=q_1+q_0$, gives the following expression for $G(z)$:
\begin{equation}
G(z)
\;=\;q_0+q_1z+G_2(z)
\;=\;\frac{q_0+(q_1-q_0)z+(q_2-q_1-q_1)z^2}{1-z-z^2}
\;=\;\frac{1}{1-z-z^2} \ .
\end{equation}
\end{eg}
Since the matrix $\A$ is nonsingular, 
it follows that a similar analysis can be made for an arbitrary right-hand side of equations~\eqref{eq:F+G_relations-mx_form}.
In particular, given {\it any} initial conditions to the recurrence 
relations~\eqref{eq:recurrence_rel_p} and~\eqref{eq:recurrence_rel_q}.
\begin{corollary}
Take an arbitrary pre-periodic sequence $a_j$ of positive integers of pre-period $\ell$.
Let $r_n$ be an arbitrary sequence satisfying the recurrence relations given in~\eqref{eq:recurrence_rel_p} and~\eqref{eq:recurrence_rel_q}.
(For any choice of initial conditions.)
Then the corresponding generating function is rational.
\end{corollary}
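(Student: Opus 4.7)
The plan is to observe that the entire argument leading to Theorem~\ref{thm:main-gen_fn_rat2} depends on only two ingredients: the common recurrence $r_n = a_n r_{n-1} + r_{n-2}$ and the eventual periodicity of the sequence $(a_j)$. The particular initial conditions $p_0=0, p_1=1$ and $q_0=1, q_1=a_1$ enter only at the very last step, in identifying the right-hand side vector as $r_\kay z^\kay \A_\ell + r_{\kay-1} z^{\kay+1} \A_1$. Hence the derivation carries over verbatim with $p_n$ (or $q_n$) replaced by an arbitrary solution $r_n$ of the recurrence.

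First I would, without loss of generality, choose the pre-period $\kay \geq 2$ (enlarging it by multiples of $\ell$ if necessary, so that Remark~\ref{rmk:important-indexing} applies), and introduce the generating function $R(z) = \sum_{n \geq 0} r_n z^n$ together with its decomposition
\begin{equation*}
R(z) \;=\; \sum_{0 \leq n < \kay} r_n z^n + \sum_{\kay \leq n < \kay + \ell} R_n(z), \qquad R_n(z) \;=\; \sum_{r \in \mathbb{N}_0} r_{n+r\ell} z^{n+r\ell}.
\end{equation*}
Next I would repeat the derivation of Section~\ref{subsect:Gen_fn-ctd_frac} line-by-line, which uses only the recurrence and the periodicity identities for $A^{(m)}_{(n)}$ and $B^{(m)}_{(n)}$, to arrive at the matrix equation
\begin{equation*}
\N(z)\, \mathbf{R}(z) \;=\; r_\kay z^\kay \A_\ell + r_{\kay-1} z^{\kay+1} \A_1,
\end{equation*}
where $\mathbf{R}(z)$ is the column vector with entries $R_\kay(z), \ldots, R_{\kay+\ell-1}(z)$. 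The one place requiring a mild check is the identity $r_{\kay+m} - r_\kay A^{(m)}_{(\kay+\ell)} = r_{\kay-1} A^{(m-1)}_{(\kay+1)}$, used to obtain this succinct right-hand side; but this follows by an induction on $m$ from the recurrence for $r_n$ and the relations~\eqref{eq:AB_expansion},~\eqref{eq:AB_recurrence_rel}, in the same way as for $p_n$ and $q_n$.

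Finally, applying Proposition~\ref{prop:NE=Ak} with $n = 1$ and $n = \ell$, linearity expresses each $R_{\kay+m-1}(z)$ as a rational function with common denominator $v(z^\ell) = 1 - (-1)^\kay \delta z^\ell + (-1)^\ell z^{2\ell}$, which depends only on the $a_j$. Summing with the polynomial head $\sum_{0 \leq n < \kay} r_n z^n$ yields rationality of $R(z)$. I do not anticipate any real obstacle: the proof of Theorem~\ref{thm:main-gen_fn_rat2} was already organised to separate the dependence on the partial quotients (encoded in $\N(z)$, $\A$ and $v$) from the dependence on the initial data (encoded solely in the right-hand side), so the corollary is essentially immediate once this observation is made explicit.
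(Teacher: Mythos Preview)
Your proposal is correct and follows essentially the same approach as the paper: the paper states this corollary immediately after Theorem~\ref{thm:main-gen_fn_rat2} with only the one-line justification that, since $\A$ is nonsingular, the same analysis applies to an arbitrary right-hand side of~\eqref{eq:F+G_relations-mx_form}. You have simply made this explicit by tracing through the argument and verifying that the right-hand side still takes the form $r_\kay z^\kay \A_\ell + r_{\kay-1}z^{\kay+1}\A_1$, which is exactly what the paper intends.
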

\begin{corollary}\label{cor:alpha_vs_chi}
Let $\theta\in\mathbb{R}\setminus\mathbb{Q}$ be a quadratic irrational.
Let $M_1$ denote the element of $\mathrm{PSL}(2,\mathbb{Z})$ corresponding to the periodic part of the continued fraction expansion of $\theta$.
Denote the characteristic polynomial by $\chi$.
The denominator of the corresponding generating functions $F$ and $G$ has the form $v(z^\ell)$,
where 
\begin{equation}\label{eq:denom_vs_char-poly}
v(\xi)
\;=\;
\xi^2\chi\left(\frac{1}{\xi}\right) \ .
\end{equation}
In particular, 
(i) $\discr_v=\discr_\chi$; 
(ii) the zero of $v$ of smallest modulus is the reciprocal of the zero of $\chi$ of largest modulus, 
{\it i.e.}, the reciprocal of the largest eigenvalue of $M_1$.
\end{corollary}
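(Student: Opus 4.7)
The plan is to identify the denominator $v$ computed during the proof of Theorem~\ref{thm:main-gen_fn_rat2} with the reverse of the characteristic polynomial $\chi$ of $M_1$ given in Section~\ref{subsubsect:Quad_Irr}, and then to read off (i) and (ii) as formal consequences of the reciprocal-polynomial relationship.

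First I would recall from~\eqref{eq:alpha_explicit} (established inside Proposition~\ref{prop:NE=Ak}) that the denominator of $F$ and $G$ equals $v(z^\ell)$, where
\begin{equation*}
v(\xi)\;=\;1-\left(A^{(\ell-2)}_{(\kay+\ell+1)}+A^{(\ell)}_{(\kay+\ell)}\right)\xi+(-1)^\ell\xi^2.
\end{equation*}
By the periodicity property~\eqref{eq:AB_periodic} of $A^{(m)}_{(n)}$ in the lower index, the shift by $\ell$ leaves these coefficients invariant, giving
\begin{equation*}
v(\xi)\;=\;1-\left(A^{(\ell-2)}_{(\kay+1)}+A^{(\ell)}_{(\kay)}\right)\xi+(-1)^\ell\xi^2.
\end{equation*}
On the other hand, from the explicit form of $M_1$ in~\eqref{eq:a1a2...al} and the fact that $\det M_1=(-1)^\ell$, the characteristic polynomial of $M_1$ is
\begin{equation*}
\chi(z)\;=\;z^2-\left(A^{(\ell-2)}_{(\kay+1)}+A^{(\ell)}_{(\kay)}\right)z+(-1)^\ell.
\end{equation*}

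The key step is then a direct verification: computing $\xi^2\chi(1/\xi)$ yields precisely $1-(A^{(\ell-2)}_{(\kay+1)}+A^{(\ell)}_{(\kay)})\xi+(-1)^\ell\xi^2$, which is exactly $v(\xi)$. This establishes the identity~\eqref{eq:denom_vs_char-poly}.

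The two conclusions follow immediately. For (i), writing $\chi(z)=z^2-Tz+D$ with $T=A^{(\ell-2)}_{(\kay+1)}+A^{(\ell)}_{(\kay)}$ and $D=(-1)^\ell$, we have $v(\xi)=D\xi^2-T\xi+1$, and a direct calculation of discriminants gives $\discr_v=T^2-4D=\discr_\chi$. For (ii), the relation $v(\xi)=\xi^2\chi(1/\xi)$ shows that $\xi_0$ is a zero of $v$ if and only if $1/\xi_0$ is a zero of $\chi$; hence the zero of $v$ of smallest modulus corresponds to the zero of $\chi$ of largest modulus, which by definition is the spectral radius of $M_1$, i.e. its largest eigenvalue. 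No step here presents a real obstacle; the only point requiring care is the periodicity shift that allows one to replace the indices $\kay+\ell+1$ and $\kay+\ell$ appearing in~\eqref{eq:alpha_explicit} by the indices $\kay+1$ and $\kay$ appearing in the expression for $\chi$.
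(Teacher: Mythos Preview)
Your proof is correct and follows essentially the same route as the paper: both identify $v(\xi)$ with $1-\mathrm{tr}(M_1)\xi+\det(M_1)\xi^2$ via the explicit formula~\eqref{eq:alpha_explicit} (using periodicity in the lower index) and then verify~\eqref{eq:denom_vs_char-poly} directly, deducing (i) and (ii) from the reciprocal-polynomial relation. The only cosmetic difference is that you spell out the periodicity shift and the discriminant computation more explicitly than the paper does.
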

\begin{proof}
Recall that $M_1$ is given by equation~\eqref{eq:a1a2...al}. 
Observe that $M_1$ is a product of $\ell$ matrices of determinant $-1$.
Thus $\det M_1=(-1)^\ell$.
Next observe that by equations
~\eqref{eq:v1_in_terms_of_A},
~\eqref{eq:v0v1v2}
and~\eqref{eq:v_in_terms_of_v0v1v2}

\begin{align}
v(\xi)
&=1-\left(A^{(\ell-2)}_{(\kay+1)}+A^{(\ell)}_{(\kay)}\right)\xi+(-1)^\ell\xi^2\\
&=1-\mathrm{tr}(M_1)\xi+\mathrm{det}(M_1)\xi^2\\
&=\xi^2\chi\left(\frac{1}{\xi}\right) \ .
\end{align}
This shows the first part.
Since the discriminant of an arbitrary quadratic polynomial $\chi$ 
is invariant under the involution 
$\chi(\xi)\mapsto \xi^2 \chi(1/\xi)$, 
(i) holds. 
Finally, (ii) follows trivially from~\eqref{eq:denom_vs_char-poly}.

\begin{comment}
From the preceding proof we $v$ is given explicitly 
by equation~\eqref{eq:alpha_explicit}.
A straighforward calculations shows therefore that
the discriminant is
\begin{equation}\label{eq:discr_alpha=discr_chi}
\discr_v=\left(A^{(\ell-2)}_{(1)}+A^{(\ell)}_{(0)}\right)^2-4(-1)^\ell
\end{equation}
Since the minimal polynomial $\chi$ of $\theta$ 
is given by~\eqref{eq:min_poly} we find also that
\begin{equation}\label{eq:discr_chi=discr_alpha}
\discr_\chi
=\left(A^{(\ell)}_{(0)}-A^{(\ell-2)}_{(1)}\right)^2+4A^{(\ell-1)}_{(0)}A^{(\ell-1)}_{(1)}
=\left(A^{(\ell)}_{(0)}+A^{(\ell-2)}_{(1)}\right)^2-4
\left(A^{(\ell-2)}_{(1)}A^{(\ell)}_{(0)}-A^{(\ell-1)}_{(0)}A^{(\ell-1)}_{(1)}\right)
\end{equation}
but since the matrix in the right-hand side of 
equation~\eqref{eq:a1a2...al} is a product of 
$\ell$ matrices of determinant $-1$, we have
\begin{equation}\label{eq:det_a1a2...al}
A^{(\ell-2)}_{(1)}A^{(\ell)}_{(0)}-A^{(\ell-1)}_{(0)}A^{(\ell-1)}_{(1)}=(-1)^\ell
\end{equation}
Combining~\eqref{eq:discr_alpha=discr_chi},
~\eqref{eq:discr_chi=discr_alpha} 
and~\eqref{eq:det_a1a2...al} now gives the result.
\end{comment}
\end{proof}
%
\begin{comment}
\begin{remark}
Let $\chi$ and $\hat\chi$ be quadratic polynomials with integer coefficients.
If $\discr_{\chi}=\discr_{\hat\chi}$, then there exists an affine map 
with rational coefficients sending the zeroes of $\chi$ to the zeroes of $\hat\chi$.
In other words, the zero sets are rationally related.
\end{remark}
\end{comment}
%
\begin{proof}[Proof of Theorem~\ref{thm:Levy_const-vs-spec_radius}]
By the Theorem of Jager and Liardet~\cite{JagerLiardet1988}, 
the L\'evy constant 
$\beta(\theta)=\lim_{n\to\infty} \frac{1}{n}\log q_n$ 
exists.
By the Cauchy-Hadamard formula, 
if $\rho$ denotes the radius of convergence of the generating function $G$, 
then
$1/\rho=\lim_{n\to\infty}q_n^{1/n}$.
Therefore, 
by continuity of the logarithm,
$\beta(\theta)=-\log\rho$.

Recall that, 
analogously to equation~\eqref{eq:F_splitting-k=0}, 
$G(z)=J_\kay(z)+\sum_{\kay\leq n\leq\kay+\ell} G_n(z)$, 
where $G_n$ is defined by~\eqref{def:Fn+Gn}.
By Theorem~\ref{thm:main-gen_fn_rat2}, each $G_n$ may be expressed in the form
\begin{equation}
G_n(z)=u_n(z)/v(z^\ell)
\end{equation} 
where 
\begin{equation}
u_n(z)=z^n q_n+z^{n+\ell}(-1)^{\ell+1}q_{n-\ell} \quad \mbox{and} \quad
v(\xi)=1-(-1)^\kay\delta \xi+(-1)^{\ell}\xi^2 \ ,
\end{equation}
for $\kay\leq n<\kay+\ell$.
(Observe the indexing is different from the proof of Theorem~\ref{thm:main-gen_fn_rat2}.)
Consequently,
\begin{equation}
G(z)=I_\kay(z)+u(z)/v(z^\ell) \ ,
\end{equation} 
where $u(z)=\sum_{\kay\leq n\leq\kay+\ell} u_n(z)$.
The poles of $G$ must occur at solutions to $v(z^\ell)=0$.
Let $v_\pm$ denote the zeroes of $v$, where $\pm$ denotes the sign used in the quadratic formula.
Thus
\begin{equation}
v_\pm
\;=\;
\frac{\tau\pm\sqrt{\tau^2-4(-1)^\ell}}{2(-1)^\ell}
\end{equation}
where $\tau=\mathrm{tr}M_1=A^{(\ell-2)}_{\kay+1}+A^{(\ell)}_{\kay}$.
Observe that both $v_-$ and $v_+$ are real. 
Also observe that the zero $v_\mathrm{min}$ with the smallest modulus 
($v_-$ in the case when $\ell$ is even, and $v_+$ in the case when $\ell$ is odd)
is always positive and, in fact, lies in the interval $(0,1)$.
Consequently $v_\mathrm{min}^{1/\ell}$ is a zero of $v(z^\ell)$, it is also positive real number, and moreover lies in the interval $(0,1)$.

\begin{claim}
The polynomial $u(z)$ has no zeroes in $(0,1)$.
\end{claim}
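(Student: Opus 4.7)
The plan is to prove the stronger statement that $u(z) > 0$ on $(0,1)$. Starting from
\begin{equation*}
u(z) \;=\; \sum_{n=\kay}^{\kay+\ell-1}\bigl(z^n q_n + (-1)^{\ell+1}z^{n+\ell}q_{n-\ell}\bigr),
\end{equation*}
I would factor out $z^n$ in each summand to rewrite this as
\begin{equation*}
u(z) \;=\; \sum_{n=\kay}^{\kay+\ell-1} z^n\bigl(q_n + (-1)^{\ell+1}z^\ell q_{n-\ell}\bigr),
\end{equation*}
which makes the sign analysis transparent.

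Next I would split on the parity of $\ell$. When $\ell$ is odd, $(-1)^{\ell+1}=+1$, and since every $q_j$ is a positive integer and $z\in(0,1)$, each bracketed term, and hence $u(z)$ itself, is manifestly positive. When $\ell$ is even, $(-1)^{\ell+1}=-1$, so the bracket is $q_n - z^\ell q_{n-\ell}$. Here I would use two simple facts: (i) $z\in(0,1)$ implies $z^\ell\in(0,1)$, and (ii) the recurrence $q_{j+1}=a_{j+1}q_j+q_{j-1}$ with positive integer partial quotients forces $q_{j+1}\ge q_j$ for all $j\ge 0$ (in fact strictly for $j\ge 1$), so iterating gives $q_n \ge q_{n-\ell}$ for $n\ge \ell$; this applies since the standing hypothesis $\kay\ge 2$ together with $n\ge \kay$ in the theorem's setup (and $\kay\ge\ell$) guarantees $n-\ell\ge 0$. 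Combining, $z^\ell q_{n-\ell} < q_{n-\ell}\le q_n$, so each bracket is positive.

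In both parity cases $u(z)$ is a sum of strictly positive quantities on $(0,1)$, hence $u(z)>0$ there and in particular has no zero in $(0,1)$. I do not foresee any serious obstacle: the entire argument reduces to sign-tracking after the convenient regrouping, together with the elementary monotonicity of the denominator sequence $q_n$ inherited from the continued-fraction recurrence.
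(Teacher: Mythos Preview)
Your proof is correct and follows essentially the same approach as the paper: both arguments factor out $z^n$, then use $z^\ell<1$ together with the monotonicity $q_n\ge q_{n-\ell}$ to conclude that each summand is strictly positive on $(0,1)$. The only cosmetic difference is that you split on the parity of $\ell$, whereas the paper handles both cases simultaneously via the inequality $q_n>|q_{n-\ell}z^\ell|$.
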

\vspace{5pt}

\noindent
{\it Proof of Claim:}
Rearranging the expression for $u(z)$ given above, we have
\begin{equation}
u(z)
\;=\;\sum_{\kay\leq n\leq \kay+\ell} u_n(z)
\;=\;\sum_{\kay\leq n\leq \kay+\ell} z^{n}\left(q_{n} +z^\ell(-1)^{\ell+1} q_{n-\ell}\right) \ .
\end{equation}
Observe that $q_n>q_{n-\ell}$ for all $n>\ell$. 
Thus, for all $n>\ell$ and all $z\in (0,1)$, we have the inequality
$q_{n}>|q_{n-\ell}z^\ell|$ 
On the interval $(0,1)$ the polynomial $u$ can be expressed as a sum of positive terms, and the claim follows.
/\!/

\vspace{5pt}

\noindent
Consequently, $v_\mathrm{min}^{1/\ell}$ is a pole of the generating function $G(z)$.
Any other pole of $G(z)$ must lie on the union of circles
\begin{equation}
\left\{|z|=|v_-|^{1/\ell}\right\} 
\cup 
\left\{|z|=|v_+|^{1/\ell}\right\} \ .
\end{equation}
Thus $\rho=|v_\mathrm{min}|^{1/\ell}$.
However, by Corollary~\ref{cor:alpha_vs_chi} above, 
it follows that $1/v_-$ and $1/v_+$ are zeroes of the characteristic polynomial $\chi$.
Since $v_\mathrm{min}$ is the zero of $v(\xi)$ of minimal modulus
it follows that 
$1/v_\mathrm{min}$ is the zero of $\chi(\xi)$ of maximal modulus, {\it i.e.}, 
an eigenvalue of maximal modulus of the matrix $M_\theta$. 
The result follows. 
\end{proof}
%
%

\appendix 
\section{Matrix Computations.}\label{sect:matrix_comp}
\subsection{The Setup.}
Let $\C_1,\C_2,\ldots,\C_\ell$ be column vectors in $\mathbb{C}^\ell$
and define the $\ell\times\ell$ matrix
\begin{equation}
\C
\;=\;
\left[\begin{array}{cccc} \C_1&\C_2&\cdots&\C_\ell\end{array}\right] \ .
\end{equation}
For $1\leq r,s\leq \ell$, denote by
\begin{itemize}
\item 
$\C^{r}$ the matrix $\C$ with the $r$th row removed;
\item 
$\C^{rs}$ the matrix $\C$ with the $r$th row and $s$th column removed\footnote{{\it i.e.}, the $(r,s)$th matrix minor of $\C$.};
\item
$\C_{s}$ the $s$th column of the matrix $\C$;
\item 
$\C_{rs}$ the $r$th row of the $s$th column\footnote{{\it i.e.}, the $(r,s)$th entry of $\C$.} of the matrix $\C$;
\item
$\C_s^r$ the column $\C_s$ with the $r$th row removed.
\end{itemize}
%
For each $s=1,2,\ldots,\ell$, and $p=0,1$ or $2$,
take a polynomial $c_s^p(z)\in\mathbb{C}[z]$ 
and define
\begin{equation}\label{eq:C_mp_cols}
\C_{s;p}(z)
\;=\;
c_s^p(z)\C_{s+p} \ , 
\qquad 
\forall s=1,2,\ldots,\ell , \qquad \forall p=0,1,2 \ .
\end{equation}
(Thus, for instance, 
$\C_{s;p}^r(z)=c_s^p(z)\C_{s+p}^r$, 
where addition in the lower index is taken mod $\ell$.)
Define the column vectors
\begin{equation}\label{eq:O_cols}
\N_s(z)
\;=\;
\C_{s;0}(z)+\C_{s;1}(z)+\C_{s;2}(z) \ , 
\qquad 
\forall s=1,2,\ldots,\ell \ ,
\end{equation}
and the matrix
\begin{equation}
\N(z)
\;=\;
\left[\begin{array}{cccc}\N_1(z) & \N_2(z) & \cdots & \N_\ell(z)\end{array}\right] \ .
\end{equation}
For $1\leq r,s\leq \ell$, let 
\begin{equation}
\N^{rs}(z)
\;=\;
\left[\begin{array}{cccccc}\N_1^r(z)&\N_2^r(z)&\cdots & \widehat{\N_s^r(z)}&\cdots&\N_\ell^r(z)\end{array}\right] \ ,
\end{equation}
where the hat denotes that the column is omitted.
(Thus $\N^{rs}(z)$ denotes the $(r,s)$th matrix minor of $\N(z)$, 
agreeing with the notation already introduced above.)

\subsection{Computation of a Determinant.}\label{sect:determinant}
%
\begin{theorem}\label{thm:det-general}
If
\begin{equation}\label{eq:monomial_hyp}
c^p_s(z)\;=\;\gamma^p_s z^p 
\quad \mbox{for some} \quad 
\gamma^p_s\in\mathbb{C} \ , 
\qquad
\forall s=1,2,\ldots,\ell \ , \ \forall p=0,1,2 \ ,
\end{equation}
then
\begin{equation}
\det\N(z)=\det\C\times v(z^\ell)
\end{equation}
where $v$ is a quadratic polynomial
\begin{equation}
v(\xi)\;=\;v_0+v_1\xi+v_2\xi^2
\end{equation}
with coefficients given by
\begin{itemize}
\item
$v_0=\prod_{1\leq k\leq\ell} \gamma_k^0$;
\item
$v_1=v_1(\gamma_1^0,\ldots)$ 
is a polynomial in 
$\gamma_1^0,\ldots$, 
which is invariant under cyclic permutations in the lower index, but not a symmetric polynomial;
\item
$v_2=\prod_{1\leq k\leq\ell} \gamma_k^2$.
\end{itemize}
\end{theorem}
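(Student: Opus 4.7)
The plan is to expand $\det\N(z)$ by multilinearity in the columns and then exploit the monomial hypothesis together with a simple modular arithmetic observation to collapse the expansion to a quadratic in $z^\ell$.

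First I would apply multilinearity of the determinant in each of the $\ell$ columns. Since $\N_s(z) = \C_{s;0}(z) + \C_{s;1}(z) + \C_{s;2}(z)$, one obtains
\begin{equation}
\det\N(z) \;=\; \sum_\tau \det\bigl[\C_{1;\tau(1)}(z),\ldots,\C_{\ell;\tau(\ell)}(z)\bigr]
\end{equation}
summed over all maps $\tau:\{1,\ldots,\ell\}\to\{0,1,2\}$. Substituting $\C_{s;p}(z) = \gamma_s^p z^p \C_{s+p}$ (indices modulo $\ell$) and factoring out scalars yields
\begin{equation}
\det\N(z) \;=\; \sum_\tau z^{|\tau|}\Bigl(\prod_s \gamma_s^{\tau(s)}\Bigr)\det\bigl[\C_{1+\tau(1)},\ldots,\C_{\ell+\tau(\ell)}\bigr], \qquad |\tau| := \sum_s \tau(s) \ .
\end{equation}

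Next I would observe that the column determinant vanishes unless $\upsilon_\tau: s \mapsto s + \tau(s) \pmod \ell$ is a permutation of $\mathbb{Z}_\ell$, in which case it equals $\sgn(\upsilon_\tau)\det\C$. Writing $\mathcal{T}^+$ for the set of such $\tau$, the key observation is that for $\tau\in\mathcal{T}^+$, the sum $\sum_s(s+\tau(s))$ equals $\sum_s s \pmod \ell$, so $|\tau| \equiv 0 \pmod \ell$; combined with $0\leq|\tau|\leq 2\ell$, this forces $|\tau|\in\{0,\ell,2\ell\}$. This immediately gives the factorization $\det\N(z) = \det\C \cdot v(z^\ell)$ with $v(\xi) = v_0 + v_1\xi + v_2\xi^2$.

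Finally I would read off the coefficients. Only $\tau\equiv 0$ contributes to $|\tau|=0$, with $\upsilon_\tau = \id$, giving $v_0 = \prod_s \gamma_s^0$. Only $\tau\equiv 2$ contributes to $|\tau|=2\ell$, giving the shift $s\mapsto s+2$, whose signature a short parity check shows is $+1$ for all $\ell$ (single $\ell$-cycle of even permutation sign when $\ell$ is odd, two $(\ell/2)$-cycles when $\ell$ is even, which also multiply to $+1$); hence $v_2 = \prod_s \gamma_s^2$. The coefficient $v_1$ is thus the sum over $\tau\in\mathcal{T}^+$ with $|\tau|=\ell$ (equivalently, $\tau\not\equiv 0,2$) of $\sgn(\upsilon_\tau)\prod_s \gamma_s^{\tau(s)}$. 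Cyclic invariance in the lower index follows because the relabeling $\gamma_s^p \mapsto \gamma_{s+1}^p$ corresponds to simultaneously cyclically permuting the columns of $\C$ and of $\N(z)$, which multiplies both determinants by the same sign $(-1)^{\ell-1}$; non-symmetry is witnessed by an explicit low-$\ell$ example (for $\ell=3$ one verifies directly that no transposition of lower indices preserves the four-term polynomial). The main obstacle is purely combinatorial bookkeeping---verifying the signature of the shift-by-two permutation across parities of $\ell$ and keeping modular indexing consistent---but the real conceptual content is the divisibility $|\tau|\equiv 0\pmod \ell$, which is what collapses the generically $3^\ell$-term multilinear expansion into a quadratic polynomial in $z^\ell$.
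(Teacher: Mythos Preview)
Your proof is correct and reaches the conclusion by a genuinely different route than the paper. Both approaches begin with the same multilinear expansion and the observation that only those $\tau$ for which $\upsilon_\tau$ is a permutation contribute. The divergence is in establishing $|\tau|\in\{0,\ell,2\ell\}$. The paper first proves (Proposition~\ref{prop:U-permutation}) that $\upsilon_\tau$ is a permutation if and only if $\tau$ is \emph{non-decreasing} in the sense of~\eqref{eq:tau_decreasing_at_k}, then classifies all non-decreasing strings over $\{0,1,2\}$ as words of the shape $0^\kappa 1^{\lambda_1}b\cdots b\,1^{\lambda_r}2^\mu$ with $b=20$ (Lemma~\ref{lem:key}), and finally reads off $|\tau|$ from this explicit form (Proposition~\ref{prop:sums_of_non-decr_tau}). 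Your argument bypasses this combinatorics entirely: since a permutation of $\mathbb{Z}_\ell$ preserves $\sum_s s$ modulo $\ell$, one gets $|\tau|\equiv 0\pmod\ell$ in one line, and the bound $0\le|\tau|\le 2\ell$ finishes it. For the purposes of Theorem~\ref{thm:det-general} alone, your approach is strictly shorter and more conceptual. The paper's structural classification, however, is not wasted effort: the explicit string decomposition is reused essentially in Section~\ref{sect:Cramers_rule} (Corollary~\ref{cor:tau-omitted_value} and Proposition~\ref{prop:b_and_beta}) to derive the recurrence relations~\eqref{eq:kappa_recurrence_rel} and~\eqref{eq:mu_recurrence_rel} for the numerators in Theorem~\ref{thm:inhomog_eq-sol}, information which your modular shortcut does not supply.
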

To prove Theorem~\ref{thm:det-general}, 
we need to set up the following notation and 
terminology. 
(This will be followed by some auxiliary 
propositions before we start the proof.)
We will identify the index set 
$\{1,2,\ldots,\ell\}$ 
for the collection of columns 
$\C_1,\C_2,\ldots,\C_\ell$
with the cyclic group $\mathbb{Z}_\ell=\mathbb{Z}/\ell\mathbb{Z}$.
Thus the index set $\{1,2,\ldots,\ell\}$ 
becomes endowed with addition modulo $\ell$ 
and the cyclic order in a natural manner.
We observe, importantly, that the notion of a closed (oriented) interval makes sense in this setting.
Let 
\begin{equation}
\mathcal{T}
\;=\;
\bigl\{\tau\colon \mathbb{Z}_\ell\to \{0,1,2\}\bigr\} \ .
\end{equation}
We will also represent $\tau$ in $\mathcal{T}$ by the corresponding string,
over the alphabet $\{0,1,2\}$, given by
\begin{equation}\label{eq:tau_string}
\tau(1)\tau(2)\cdots\tau(\ell) \ .
\end{equation}
As usual, we denote by $0^r$ a (sub)string of $00\cdots 0$ of length $r$ and define $1^r$ and $2^r$ similarly.  
\begin{remark}
The reason for introducing this notation is the following.
Since the determinant is multilinear, equation~\eqref{eq:O_cols} implies that
\begin{equation}\label{eq:detO-multilinear}
\det \N(z)
\;=\;
\sum_{\tau\in \mathcal{T}}
\det \left[\begin{array}{cccc}\C_{1;\tau(1)}& \C_{2;\tau(2)}& \cdots& \C_{\ell;\tau(\ell)}\end{array}\right] \ .
\end{equation}
We need to compute each summand. 
For each $\tau\in\mathcal{T}$, multilinearity of the determinant again, together with equation~\eqref{eq:C_mp_cols}, 
implies that
\begin{equation}\label{eq:det_monomial}
\begin{gathered}
\det \left[\begin{array}{cccc}\C_{1;\tau(1)}& \C_{2;\tau(2)}& \cdots& \C_{\ell;\tau(\ell)}\end{array}\right]\\
\;=\;
c(\tau) 
\det\left[\begin{array}{cccc}\C_{1+\tau(1)}& \C_{2+\tau(2)}& \cdots& \C_{\ell+\tau(\ell)}\end{array}\right]
\end{gathered}
\end{equation}
where
\begin{equation}
c(\tau)\;=\;\prod_{1\leq k\leq \ell} c_k^{\tau(k)} \ .
\end{equation}
Thus we wish to determine 
(i) when the determinant on the right-hand side of~\eqref{eq:det_monomial} is zero,
(ii) if the determinant on the right-hand side of~\eqref{eq:det_monomial} is non-zero, calculate $c(\tau)$.
\end{remark}
For $\tau\in\mathcal{T}$, define $\upsilon_\tau\colon \mathbb{Z}_\ell\to\mathbb{Z}_\ell$ by
\begin{equation}
\upsilon_\tau(k) \;=\; k+\tau(k) \ .
\end{equation}
This gives a one-to-one correspondence between $\mathcal{T}$ and the set
\begin{equation}
\mathcal{U}
\;=\;
\bigl\{\upsilon\colon\mathbb{Z}_\ell\to\mathbb{Z}_\ell \ \big| \ 
k\leq \upsilon(k)\leq k+2 \mod\ell, \  \forall k\in\mathbb{Z}_\ell\bigr\} \ .
\end{equation} 
Given $\upsilon\in\mathcal{U}$, let $\tau_\upsilon$ denote the corresponding element of $\mathcal{T}$.
We say that $\tau\in \mathcal{T}$ is {\it decreasing} at 
$k\in \mathbb{Z}_\ell$ 
if 
\begin{equation}\label{eq:tau_decreasing_at_k}
\tau(k+1)\;=\;\tau(k)-1 \qquad \mbox{or} \qquad
\tau(k+2)\;=\;\tau(k)-2 \ .
\end{equation} 
Naturally, if $\tau$ is not decreasing at any point we say it is {\it non-decreasing}.
We say that $\upsilon\in\mathcal{U}$ is {\it non-decreasing} if the corresponding $\tau_\upsilon$ is non-decreasing.
Let
\begin{align}
\mathcal{T}^+&\;=\;\left\{\tau\in\mathcal{T} : \tau \ \mbox{is non-decreasing} \ \right\}
\end{align}
and
\begin{align}
\mathcal{U}^+&\;=\;\left\{\upsilon\in\mathcal{U} : \upsilon \ \mbox{is non-decreasing} \ \right\} \ .
\end{align}
\begin{proposition}\label{prop:U-permutation}
For $\upsilon\in\mathcal{U}$, $\upsilon\in\mathcal{S}_\ell$ 
only if $\upsilon\in \mathcal{U}^+$.
\end{proposition}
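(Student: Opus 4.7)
The plan is to argue by contrapositive: I will show that if $\upsilon\in\mathcal{U}$ fails to be non-decreasing, then $\upsilon$ is not injective, and therefore cannot lie in $\mathcal{S}_\ell$. This reduces everything to unpacking the definition of \emph{decreasing at $k$} given in~\eqref{eq:tau_decreasing_at_k} and translating it back to a statement about $\upsilon$ itself via the identity $\tau_\upsilon(j)=\upsilon(j)-j$ (mod $\ell$).

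First, I would fix $\upsilon\in\mathcal{U}\setminus\mathcal{U}^+$ and pick a point $k\in\mathbb{Z}_\ell$ at which $\tau=\tau_\upsilon$ is decreasing. By definition this means one of the two equalities $\tau(k+1)=\tau(k)-1$ or $\tau(k+2)=\tau(k)-2$ holds. In the first case,
\begin{equation}
\upsilon(k+1)\;=\;(k+1)+\tau(k+1)\;=\;(k+1)+\tau(k)-1\;=\;k+\tau(k)\;=\;\upsilon(k) \ ,
\end{equation}
while in the second case,
\begin{equation}
\upsilon(k+2)\;=\;(k+2)+\tau(k+2)\;=\;(k+2)+\tau(k)-2\;=\;k+\tau(k)\;=\;\upsilon(k) \ .
\end{equation}
In either case $\upsilon$ takes the same value at two distinct points of $\mathbb{Z}_\ell$, so $\upsilon$ is not injective and hence not an element of $\mathcal{S}_\ell$. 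Contrapositively, $\upsilon\in\mathcal{S}_\ell$ forces $\upsilon\in\mathcal{U}^+$.

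I do not anticipate a genuine obstacle: the only thing one must be careful about is that addition in the index is taken modulo $\ell$, and that the constraint $k\leq\upsilon(k)\leq k+2$ in $\mathcal{U}$ ensures $\tau(k)\in\{0,1,2\}$ so the decrements make sense in that range. Everything else is a one-line direct computation from the definitions. The proposition is really a structural lemma that lets the author restrict attention to $\mathcal{U}^+$ (equivalently $\mathcal{T}^+$) when expanding the determinant in~\eqref{eq:detO-multilinear}, since only permutations contribute non-zero terms.
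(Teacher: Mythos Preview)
Your contrapositive argument is correct and is essentially identical to the first paragraph of the paper's proof: both show that if $\tau$ is decreasing at some $k$ then $\upsilon(k)=\upsilon(k+1)$ or $\upsilon(k)=\upsilon(k+2)$, so $\upsilon$ is not injective. One remark: although the proposition is stated as an ``only if'', the paper's proof goes on to establish the converse as well (if $\upsilon$ is not a permutation then $\tau_\upsilon$ is decreasing somewhere, via a short case analysis using $|k_1-k_2|\leq 2$); this biconditional is what is actually used in Corollary~\ref{cor:decr+non-decr}, where one needs that every $\tau\in\mathcal{T}^+$ gives a genuine permutation in order to extract the factor $(-1)^{\sgn(\upsilon_\tau)}\det\C$.
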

\begin{proof}
Take $k\in\mathbb{Z}_\ell$.
If 
$\tau(k+1)=\tau(k)-1$ then $\upsilon(k+1)=\upsilon(k)$.
Similarly, if
$\tau(k+2)=\tau(k)-2$ then $\upsilon(k+2)=\upsilon(k)$.
In either case $\upsilon$ is not injective.
Thus decreasing $\upsilon$ cannot be permutations.

Conversely, assume that $\upsilon\in\mathcal{U}$ is not a permutation.
Thus there exist distinct $k_1,k_2\in\mathbb{Z}_\ell$ such that $\upsilon(k_1)=\upsilon(k_2)$.
Since $k\leq \upsilon(k)\leq k+2$ for all $k\in\mathbb{Z}_\ell$, this implies that $|k_1-k_2|\leq 2$.
A case analysis now finishes the proof. 
\end{proof}
\begin{corollary}\label{cor:decr+non-decr}
Let $\tau\in\mathcal{T}$.
If $\tau\notin\mathcal{T}^+$ then
\begin{equation}
\det 
\left[\begin{array}{cccc}\C_{1;\tau(1)}& \C_{2;\tau(2)}& \cdots& \C_{\ell;\tau(\ell)}\end{array}\right]
\;=\;0 \ .
\end{equation}
Otherwise $\tau\in\mathcal{T}^+$ and
\begin{align}
\det 
\left[\begin{array}{cccc}\C_{1;\tau(1)}& \C_{2;\tau(2)}& \cdots& \C_{\ell;\tau(\ell)}\end{array}\right]
\;=\;(-1)^{\sgn(\upsilon_\tau)}
c(\tau)
\det \C \ ,
\end{align}
where 
$\sgn(\upsilon_\tau)$ denotes parity of the number of adjacent transpositions of the permutation $\upsilon_\tau$.
\end{corollary}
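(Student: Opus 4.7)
The plan is to combine the already-derived multilinearity identity~\eqref{eq:det_monomial} with the characterization of non-decreasing maps given by Proposition~\ref{prop:U-permutation}. Indeed, by~\eqref{eq:det_monomial} one has
\begin{equation*}
\det [\C_{1;\tau(1)} \ \C_{2;\tau(2)} \ \cdots \ \C_{\ell;\tau(\ell)}]
\;=\; c(\tau)\,\det [\C_{\upsilon_\tau(1)} \ \C_{\upsilon_\tau(2)} \ \cdots \ \C_{\upsilon_\tau(\ell)}],
\end{equation*}
where $\upsilon_\tau(k)=k+\tau(k)$ (indices mod $\ell$), so it suffices to evaluate the right-hand determinant and distinguish according to whether $\upsilon_\tau$ is a permutation or not.

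Suppose first that $\tau\notin\mathcal{T}^+$, so that $\tau$ is decreasing at some $k$, meaning $\tau(k+1)=\tau(k)-1$ or $\tau(k+2)=\tau(k)-2$. In either case, the definition of $\upsilon_\tau$ gives either $\upsilon_\tau(k+1)=\upsilon_\tau(k)$ or $\upsilon_\tau(k+2)=\upsilon_\tau(k)$; in particular, $\upsilon_\tau$ fails to be injective, as noted in the first paragraph of the proof of Proposition~\ref{prop:U-permutation}. Hence the matrix $[\C_{\upsilon_\tau(1)} \ \cdots \ \C_{\upsilon_\tau(\ell)}]$ has two identical columns, and its determinant vanishes.

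Suppose instead that $\tau\in\mathcal{T}^+$. The converse direction of Proposition~\ref{prop:U-permutation} (established in the second paragraph of its proof) then forces $\upsilon_\tau$ to be a bijection of $\mathbb{Z}_\ell$, hence an element of $\mathcal{S}_\ell$. Rearranging the columns back into their natural order via a sequence of adjacent transpositions introduces an overall sign equal to $(-1)^{\sgn(\upsilon_\tau)}$, so
\begin{equation*}
\det[\C_{\upsilon_\tau(1)} \ \cdots \ \C_{\upsilon_\tau(\ell)}]\;=\;(-1)^{\sgn(\upsilon_\tau)}\det\C.
\end{equation*}
Reinserting the prefactor $c(\tau)$ then gives the claimed formula.

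I do not foresee any real obstacle: the only point requiring mild care is making sure that both implications of Proposition~\ref{prop:U-permutation} are available (the stated version is only one direction, but both are proved in the text). Given that, the entire argument reduces to multilinearity plus the standard behavior of a determinant under column permutations.
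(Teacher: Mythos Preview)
Your proposal is correct and matches the paper's intended argument: the paper gives no separate proof of the corollary, treating it as an immediate consequence of the multilinearity identity~\eqref{eq:det_monomial} together with Proposition~\ref{prop:U-permutation}, which is precisely what you spell out. Your observation that both implications of Proposition~\ref{prop:U-permutation} are needed (and that both are in fact established in its proof, despite only one being stated) is well taken.
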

\begin{lemma}\label{lem:key}
Let $[s,t]$ be a closed (oriented) interval in $\mathbb{Z}_\ell$.
If $\tau$ is non-decreasing on $[s,t]$ then $\tau$ has the form
\begin{equation}
\tau(s)\tau(s+1)\cdots\tau(t)
\;=\;
0^\kappa1^{\lambda_1}b 1^{\lambda_2}b\cdots 1^{\lambda_{r-1}}b1^{\lambda_r}2^\mu \ ,
\end{equation}
where $\kappa$, $\lambda_1,\lambda_2,\ldots,\lambda_r$, and $\mu$ are non-negative integers and 
$b$ denotes the block $b$ of length two given by
\begin{equation}
b\;=\;20 \ .
\end{equation}
\end{lemma}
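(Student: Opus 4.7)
The plan is to read the sequence from left to right along $[s,t]$, using the local transition constraints imposed by non-decreasingness to assemble the decomposition. First I would translate the definition in~\eqref{eq:tau_decreasing_at_k} into explicit transition rules: from value $1$ the only allowed successors are $1$ or $2$; from value $2$ the allowed immediate successors are $0$ or $2$, and the two-step successor of a $2$ cannot be $0$; from value $0$ there is no constraint at all. In particular, the patterns $200$ and $220$ are both forbidden.

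Next I would show that every pair of consecutive $0$s must lie inside the initial segment of $[s,t]$. Concretely, if $\tau(k-1)=\tau(k)=0$ and $k-2\in [s,t]$, then non-decreasingness at $k-2$ forces $\tau(k-2)\neq 1$ (direct successor) and $\tau(k-2)\neq 2$ (skip successor), hence $\tau(k-2)=0$. Iterating this backwards yields $\tau(j)=0$ for all $j\in [s,k]$. Letting $\kappa$ be the maximal length of the initial run of $0$s, every remaining $0$ must then be isolated, and by the direct transition rule each such $0$ is immediately preceded by a $2$, forming a block $b=20$. Moreover, by the forbidden pattern $200$, the character immediately after any such $b$ (if it exists in $[s,t]$) lies in $\{1,2\}$.

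The remaining step is to analyze the segments lying between two consecutive $b$ blocks, between $0^\kappa$ and the first $b$, and between the last $b$ and $t$. All such segments are $\{1,2\}$-valued. The key observation is that the character immediately preceding the $2$ of any $b$ block cannot itself be $2$: if $b$ sits at positions $(k',k'+1)$, then the skip condition at $k'-1$ combined with $\tau(k'+1)=0$ gives $\tau(k'-1)\neq 2$. Since a $2$ in a $\{1,2\}$-valued non-decreasing sequence can only be followed by another $2$, this forces each intermediate segment ending at a $b$ to have the form $1^{\lambda_i}$. For the tail after the last $b$, where no further $0$s are present, the same $\{1,2\}$-analysis gives $1^{\lambda_r}2^\mu$. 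Concatenating the pieces yields the stated decomposition.

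The main subtlety is the interplay between the direct constraint $\tau(k+1)\neq\tau(k)-1$ and the skip constraint $\tau(k+2)\neq\tau(k)-2$: both are essential in order to exclude the patterns $200$ and $220$ and to force a $1$ immediately before each $b$ block. Some care is required at the endpoints $s$ and $t$, where one or both constraints become vacuous, but these boundary cases are easily accommodated in the values $\kappa=0$, $\lambda_i=0$, or $\mu=0$ and do not affect the structural conclusion.
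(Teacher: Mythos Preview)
Your argument is correct and follows essentially the same route as the paper: both proofs extract the forbidden local patterns (you phrase them as $200$ and $220$, the paper as ``two consecutive $2$'s cannot be succeeded by $0$ or $1$'' together with its dual for $0$'s) and then propagate these constraints to force runs of $0$'s to the left boundary, runs of $2$'s to the right boundary, and the interior to consist of $1$'s punctuated by isolated $20$ blocks. Your write-up is somewhat more detailed and explicitly left-to-right, whereas the paper argues more symmetrically, but the substance is identical.
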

\begin{proof}
The key observation, together with its dual, is the following:
\begin{itemize}
\item
in the string 
$\tau(s)\tau(s+1)\cdots\tau(t)$, 
two consective $2$'s cannot be succeeded by either $0$ or $1$;
\item
in the string
$\tau(s)\tau(s+1)\cdots\tau(t)$, 
two consective $0$'s cannot be preceded by either $2$ or $1$.
\end{itemize}
The reason for the first being that if $\tau(r)=\tau(r+1)=2$ then 
$\tau(r+2)=1$ implies $\tau$ is decreasing at $r+1$, while 
$\tau(r+2)=0$ implies $\tau$ is decreasing at $r$.
The argument in the dual case is analogous.

As a consequence, by the non-decreasing property,
\begin{itemize}
\item 
any substring of $0$'s of length greater than one must be contained in a substring of $0$'s attached to the left boundary of $[s,t]$;
\item
any substring of $2$'s of length greater than one must be contained in a substring of $2$'s attached to the right boundary of $[s,t]$.
\end{itemize}
Elsewhere in the string, $2$ must be followed by $0$ and $0$ must be preceded by $2$.
(We now see the importance of the block $b$.)
\end{proof}

\begin{proposition}\label{prop:sums_of_non-decr_tau}
For any $\tau\in \mathcal{T}^+$, $\sum_k\tau(k)=\eta\ell$ for some $\eta=\eta(\tau)\in\{0,1,2\}$.
\end{proposition}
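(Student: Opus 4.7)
The plan is to exploit the bijection $\tau\mapsto\upsilon_\tau$ between $\mathcal{T}$ and $\mathcal{U}$ together with Proposition~\ref{prop:U-permutation}. The case analysis in the proof of that proposition actually gives not only the ``only if'' direction stated, but also its converse: non-decreasing elements of $\mathcal{U}$ are precisely the permutations in $\mathcal{S}_\ell$. Consequently, for any $\tau\in\mathcal{T}^+$, $\upsilon_\tau$ is a genuine permutation of $\mathbb{Z}_\ell$, and the problem reduces to a one-line integer computation.

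First I would represent $\mathbb{Z}_\ell$ concretely by $\{1,2,\ldots,\ell\}$. For each $k$ the raw sum $k+\tau(k)$ lies in $\{1,\ldots,\ell+2\}$, and reducing modulo $\ell$ subtracts $\ell$ precisely when $k+\tau(k)>\ell$. Writing $N(k)\in\{0,1\}$ for this indicator, we have the integer identity
\begin{equation}
\upsilon_\tau(k)\;=\;k+\tau(k)-\ell\,N(k) \ .
\end{equation}
Summing over $k=1,\ldots,\ell$ and using that $\upsilon_\tau$ permutes $\{1,\ldots,\ell\}$, the terms $\sum_k\upsilon_\tau(k)$ and $\sum_k k$ cancel on the two sides, leaving
\begin{equation}
\sum_{k=1}^\ell \tau(k)\;=\;\ell\cdot\sum_{k=1}^\ell N(k) \ .
\end{equation}
Since $\sum_k N(k)$ is a non-negative integer and the bound $\tau(k)\leq 2$ forces $\sum_k\tau(k)\leq 2\ell$, this integer lies in $\{0,1,2\}$. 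Setting $\eta$ equal to its value yields the claim.

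I do not anticipate any serious obstacle. The only subtlety is the appeal to the converse direction of Proposition~\ref{prop:U-permutation}: that non-decreasing $\upsilon\in\mathcal{U}$ are automatically injective. This is implicit in the case analysis sketched at the end of its proof (the putative collision $\upsilon(k_1)=\upsilon(k_2)$ with $|k_1-k_2|\leq 2$ forces one of the forbidden patterns $\tau(k+1)=\tau(k)-1$ or $\tau(k+2)=\tau(k)-2$), but should probably be made explicit. A more laborious alternative would be to use the structural description in Lemma~\ref{lem:key} to enumerate admissible cyclic words over $\{0,1,2\}$ directly, noting that the only building blocks outside the constant words are $1$'s and the block $b=20$, each of which contributes a multiple of something commensurable with $\ell$; the permutation-sum argument above bypasses this combinatorial bookkeeping entirely.
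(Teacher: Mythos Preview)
Your argument is correct and genuinely different from the paper's. The paper invokes Lemma~\ref{lem:key} to obtain an explicit description of every non-constant $\tau\in\mathcal{T}^+$ as a cyclic word built from $1$'s and the block $b=20$, and then reads off $\sum_k\tau(k)=\ell$ from that description. You instead use the (implicit) converse in Proposition~\ref{prop:U-permutation} to conclude that $\upsilon_\tau$ is a bijection of $\{1,\ldots,\ell\}$, and then the identity $\sum_k\upsilon_\tau(k)=\sum_k k$ forces $\sum_k\tau(k)$ to be the $\ell$-multiple counting wraparounds. This is cleaner and avoids any combinatorial bookkeeping; the price is that it yields only the numerical conclusion, whereas the paper's route through Lemma~\ref{lem:key} produces the structural classification that is reused later (e.g.\ in Corollary~\ref{cor:tau-omitted_value}). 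One small caveat: the claim $N(k)\in\{0,1\}$ uses $\ell+2\le 2\ell$, i.e.\ $\ell\ge 2$; for $\ell=1$ the statement is trivial anyway, so this is harmless. Your remark that the converse direction of Proposition~\ref{prop:U-permutation} is actually proved in its second paragraph is accurate and worth stating explicitly, since your argument depends on it.
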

\begin{proof}
First, observe that there is 
a unique $\tau$ so that $\sum_k\tau(k)=0$, namely $\tau(k)=0$ for all $k$.
Likewise, there is 
a unique $\tau$ so that $\sum_k\tau(k)=2\ell$, namely $\tau(k)=2$ for all $k$.
Thus we just need to show that in all other cases $\sum_k\tau(k)=\ell$.
However, in all other cases, by Lemma~\ref{lem:key} above, as $\tau$ is non-decreasing (and thus non-decreasing on all subintervals) either
\begin{equation}
\tau(1)\tau(2)\cdots\tau(\ell)
\;=\;
1^{\lambda_1}b1^{\lambda_2}b\cdots 1^{\lambda_{r-1}}b1^{\lambda_r}
\end{equation}
or
\begin{equation}
\tau(1)\tau(2)\cdots\tau(\ell)
\;=\;
01^{\lambda_1}b1^{\lambda_2}b\cdots 1^{\lambda_{r-1}}b1^{\lambda_r}2
\end{equation}
for some non-negative integers $\lambda_1,\lambda_2,\ldots,\lambda_r$.
It follows trivially in either of these cases that $\sum_k \tau(k)=\ell$.
\end{proof}
\begin{proof}[Proof of Theorem~\ref{thm:det-general}]
If hypothesis~\eqref{eq:monomial_hyp} 
is satisfied then
\begin{align}
c(\tau)
\;=\;
\gamma(\tau)\times z^{\sum_k \tau(k)}
\end{align}
where
\begin{equation}
\gamma(\tau)
\;=\;
\prod_{1\leq k\leq \ell}\gamma_k^{\tau(k)}
 \ .
\end{equation}
From equation~\eqref{eq:detO-multilinear},
equation~\eqref{eq:det_monomial}, 
the preceding Corollary~\ref{cor:decr+non-decr}, 
we also get
\begin{align}
\det\N(z)
&\;=\;
\sum_{\tau\in \mathcal{T}^+} \det\left[\begin{array}{cccc}\C_{1;\tau(1)}& \C_{2;\tau(2)}& \cdots& \C_{\ell;\tau(\ell)}\end{array}\right]\\
&\;=\;
\sum_{\tau\in \mathcal{T}^+} 
\gamma(\tau)
z^{\sum_k \tau(k)}
\det\left[\begin{array}{cccc}\C_{1+\tau(1)}& \C_{2+\tau(2)}& \cdots& \C_{\ell+\tau(\ell)}\end{array}\right]\\
&\;=\;
\sum_{\tau\in \mathcal{T}^+}
(-1)^{\sgn(\tau)}
\gamma(\tau) 
z^{\sum_k \tau(k)}
\det\left[\begin{array}{cccc}\C_{1}& \C_{2}& \cdots& \C_{\ell}\end{array}\right]\\
&\;=\;
\det\C \times v(z^\ell)
\end{align}
where, 
by Proposition~\ref{prop:sums_of_non-decr_tau}, 
$v$ is the polynomial in $z$ given by
\begin{equation}
v(\xi)
\;=\;
\sum_{\tau\in \mathcal{T}^+}(-1)^{\sgn(\tau)}\gamma(\tau) \xi^{\eta(\tau)} \ .
\end{equation}
Moreover, Proposition~\ref{prop:sums_of_non-decr_tau} shows that $v$ is quadratic in $\xi$.
Observe that the only $\tau$ where $\eta\neq 1$ are $\tau= 0^\ell$ and $\tau= 2^\ell$.
Therefore $v$ has the following form:
\begin{equation}
v(\xi)
\;=\;
\xi^0\gamma(0^\ell) 
+\xi^1\sum_{\tau\in\mathcal{T}^+:\tau\not\equiv 0,2}(-1)^{\sgn(\upsilon_{\tau})}\gamma(\tau) 
+\xi^2\gamma(2^\ell) \ .
\end{equation} 
\end{proof}
%
\subsection{Solution to an inhomogeneous linear equation}\label{sect:Cramers_rule}
\begin{theorem}\label{thm:inhomog_eq-sol}
Fix $t\in\mathbb{Z}_\ell$ and let $\C$ and $\N$ be as in the preceding section.
Suppose that
\begin{equation}
c_s^p(z)\;=\;\gamma_s^p z^p \quad \mbox{for some} \quad 
\gamma_s^p\in\mathbb{C} \ , \qquad \forall s=1,2,\ldots,\ell \ , \ \forall p=0,1,2 \ ,
\end{equation}
and $\mathrm{det}\C\neq 0$. 
Then the solution $\E(z)$ to the inhomogeneous linear equation
\begin{equation}\label{eq:inhomog}
\N(z) \E(z)\;=\;\C_t
\end{equation}
exists and has entries $E_s(z)$ given by
\begin{equation}
E_s(z)\;=\;\frac{u_s(z)}{v(z^\ell)}
\end{equation}
where 
\begin{enumerate}
\item
$v$ denotes the quadratic polynomial from Theorem~\ref{thm:det-general};
\item
if we define
\begin{equation}\label{eq:kappa_def}
\kappa(s,t)
\;=\;
\left\{\begin{array}{ll}
\ell-1 & s=t\\
0 & s=t-1\\
1 & s=t-2\\
\card [s+1,t-1] & \mbox{otherwise}
\end{array}\right.
\end{equation}
and
\begin{equation}\label{eq:mu_def}
\mu(s,t)
\;=\;
\left\{\begin{array}{ll}
1 & s=t\\
0 & s=t-1\\
\ell-1 & s=t-2\\
\card [t-1,s-1] & \mbox{otherwise}
\end{array}\right.
\end{equation}
then $u_s$ is the polynomial
\begin{equation}
u_s(z)
\;=\;
\left\{
\begin{array}{ll}
z^{(\ell-1)-\kappa}u_{s,t}^{\kappa,0}+z^{(\ell-1)+\mu}u_{s,t}^{0,\mu} & s\neq t-1\\
z^{\ell-1}u_{t-1,t}^{0,0} & s=t-1
\end{array}
\right.
\end{equation}
where $u_{s,t}^{\kappa,0}$ and $u_{s,t}^{0,\mu}$ satisfy the recurrence relations
\begin{equation}
\begin{gathered}
\gamma_{s}^{0}u_{s,t}^{\kappa,0}+\gamma_{s-1}^{1}u_{s-1, t}^{\kappa+1, 0}+\gamma_{s-2}^{2}u_{s-2, t}^{\kappa+2, 0}
\;=\;0 \ , \\[10pt]
u_{t,t}^{\ell-1,0}
\;=\;
\prod_{t+1\leq k\leq t-1}\gamma_k^0 \ ,
\qquad\qquad
u_{t+1,t}^{\ell-2,0}
\;=\;
-\gamma_{t}^1\prod_{t+2\leq k\leq t-1}\gamma_k^0 \ ,
\end{gathered}
\label{eq:kappa_recurrence_rel}
\end{equation}
and
\begin{equation}
\begin{gathered}
\gamma_{s}^{2}u_{s, t}^{0,\mu}+\gamma_{s+1}^{1}u_{s+1, t}^{0, \mu+1}+\gamma_{s+2}^{0}u_{s+2, t}^{0, \mu+2}
\;=\;0 \ , \\[10pt]
u_{t-2,t}^{0,\ell-1}
\;=\;
\prod_{t-1\leq k\leq t-3}\gamma_k^2 \ ,
\qquad \qquad
u_{t-3,t}^{0,\ell-2}
\;=\;
-\gamma_{t-2}^{1}\prod_{t-1\leq k\leq t-4}\gamma_k^2 \ .
\end{gathered}
\label{eq:mu_recurrence_rel}
\end{equation}
\end{enumerate}
\end{theorem}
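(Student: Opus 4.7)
The plan is to apply Cramer's rule, writing
\begin{equation}
E_s(z) \;=\; \frac{\det \N^{(s)}(z)}{\det \N(z)} ,
\end{equation}
where $\N^{(s)}(z)$ is the matrix obtained from $\N(z)$ by replacing its $s$th column with $\C_t$. Theorem~\ref{thm:det-general} handles the denominator directly, giving $\det \N(z) = \det(\C) \cdot v(z^\ell)$; since $\det \C \neq 0$ by hypothesis, cancellation produces the common denominator $v(z^\ell)$ of every $E_s(z)$, proving part~(1) and fixing the explicit form of $v$.

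For the numerator I would mimic the multilinearity expansion used in the proof of Theorem~\ref{thm:det-general}, now with the $s$th column frozen at $\C_t$. This yields a sum indexed by functions $\tau \colon \mathbb{Z}_\ell \setminus \{s\} \to \{0,1,2\}$ whose Leibniz summand is nonzero precisely when the extension $\upsilon$ defined by $\upsilon(s) = t$ and $\upsilon(k) = k + \tau(k)$ for $k \neq s$ is a permutation of $\mathbb{Z}_\ell$. Running the analysis of Proposition~\ref{prop:U-permutation} and Lemma~\ref{lem:key} on the arc $\mathbb{Z}_\ell \setminus \{s\}$ forces such a $\tau$ to be non-decreasing, while a drift count (comparing $\sum_{k \neq s} \upsilon(k)$ with $\sum_{k \neq s} k$ over a lift to $\mathbb{Z}$) shows that $\sum_{k \neq s} \tau(k) \equiv s - t \pmod{\ell}$. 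Since the individual drifts lie in $\{0,1,2\}$, the total drift takes one of at most two values differing by $\ell$; these are precisely the exponents $(\ell-1) - \kappa$ and $(\ell-1) + \mu$ appearing in the displayed expression for $u_s(z)$, and summing the signed $\gamma$-products over the two resulting families of $\tau$ defines $u_{s,t}^{\kappa, 0}$ and $u_{s,t}^{0, \mu}$ respectively.

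To close the proof I would establish the recurrences~\eqref{eq:kappa_recurrence_rel} and~\eqref{eq:mu_recurrence_rel} by stratifying the admissible $\tau$-strings according to the value at the position adjacent to $s$: in each case $\tau(s-p) = p$ for $p = 0, 1, 2$, peeling off the factor $\gamma_{s-p}^p$ reduces to a sub-problem indexed by $(s-p, \kappa+p)$, preserving the diagonal $s + \kappa \equiv t - 1 \pmod{\ell}$ along which the recurrence unfolds. The initial conditions at $(t, \ell-1)$ and $(t+1, \ell-2)$ correspond to the identity permutation and the single transposition exchanging $t$ and $t+1$ respectively, and their signed $\gamma$-products match the displayed formulae; recurrence~\eqref{eq:mu_recurrence_rel} is dual and derived from the opposite end of the arc. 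The main obstacle will be the signature bookkeeping, namely tracking how $\sgn(\upsilon_\tau)$ transforms under each of the three peeling operations so that the three-term recurrence carries the correct signs, together with separate verification of the degenerate indexings $s \in \{t-2, t-1, t\}$ where one of the two drift values collapses or coincides with the other, matching the case-split in the statement's definition of $u_s(z)$.
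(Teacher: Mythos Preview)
Your proposal is correct and follows essentially the same route as the paper: Cramer's rule, the multilinearity expansion over $\tau\colon\mathbb{Z}_\ell\setminus\{s\}\to\{0,1,2\}$, the permutation criterion for nonvanishing via the extension $\bar\upsilon$, a two-type classification of the surviving $\tau$, and a recurrence obtained by peeling at the end of the string adjacent to the gap at $s$.

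Two small remarks on presentation. First, your drift-count argument (that $\sum_{k\neq s}\tau(k)\equiv s-t\pmod\ell$ and takes one of two values differing by $\ell$) is a clean way to see the exponents $(\ell-1)-\kappa$ and $(\ell-1)+\mu$; the paper instead gets there by an explicit string classification (Lemma~\ref{lem:key} combined with a local analysis near $t$, Proposition~\ref{prop:tau-omitted_value} and Corollary~\ref{cor:tau-omitted_value}), which yields the same dichotomy but also pins down the precise combinatorial shape of each type. Second, your stratification ``$\tau(s-p)=p$ for $p=0,1,2$'' is slightly off as written: the case $p=0$ is vacuous since $\tau$ is undefined at $s$, and the factor $\gamma_s^0$ in~\eqref{eq:kappa_recurrence_rel} arises not from a peeling but from \emph{filling in} position $s$ with a $0$ before passing to the subproblem. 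The paper's version splits on whether the string terminates in $1$ or in the block $b=20$, giving the two genuine strata $p=1,2$; the $\gamma_s^0$ is then multiplied on to both sides. This is only a phrasing issue and does not affect the correctness of your plan.
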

This will basically follow from the computation of 
$\det \N(z)$ from Section~\ref{sect:determinant} 
together with Cramer's rule.
The hypothesis that $\det \C$ is non-zero will also give us the following.
\begin{corollary}
Let $\H$ be an arbitrary non-zero vector in $\mathbb{C}^\ell$.
Then the equation
\begin{equation}
\N(z)\E(z)\;=\;\H
\end{equation}
has a unique solution.
If $\H=\sum_t h_t \C_t$ then the solution is explicitly given by
\begin{equation}
\E(z)\;=\;\sum_{t} h_t \E_t(z) \ ,
\end{equation}
where $\E_{t}(z)$ denote the solution from the preceding theorem.
\end{corollary}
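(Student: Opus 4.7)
The plan is to reduce the corollary to a linearity argument, using Theorem~\ref{thm:inhomog_eq-sol} as the only real input. First I would observe that because $\det \C \neq 0$, the columns $\C_1,\C_2,\ldots,\C_\ell$ form a basis of $\mathbb{C}^\ell$, so any vector $\H \in \mathbb{C}^\ell$ admits a unique expansion $\H = \sum_t h_t \C_t$ with scalar coefficients $h_t \in \mathbb{C}$. By Theorem~\ref{thm:inhomog_eq-sol} applied for each $t \in \mathbb{Z}_\ell$, the vector $\E_t(z)$ exists and satisfies $\N(z) \E_t(z) = \C_t$.

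For existence, I would then define $\E(z) = \sum_t h_t \E_t(z)$ and use linearity of the matrix-vector product to compute
\begin{equation*}
\N(z) \E(z) \;=\; \sum_{t} h_t \, \N(z) \E_t(z) \;=\; \sum_{t} h_t \C_t \;=\; \H \ ,
\end{equation*}
which both verifies that the stated $\E(z)$ solves the equation and reads off the explicit formula claimed in the corollary.

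For uniqueness, I would invoke Theorem~\ref{thm:det-general}, which gives $\det \N(z) = \det\C \cdot v(z^\ell)$. Since $\det \C \neq 0$ by hypothesis and the quadratic $v(\xi) = v_0 + v_1\xi + v_2\xi^2$ is not identically zero (its role as the denominator of each $E_{s}(z)$ in the preceding theorem already forces it to be a well-defined nonzero polynomial, and indeed $v_0 = \prod_s \gamma_s^0$, $v_2 = \prod_s \gamma_s^2$), we see that $\det \N(z)$ is a nonzero element of $\mathbb{C}[z]$. Consequently $\N(z)$ is invertible as a matrix over the rational function field $\mathbb{C}(z)$, so any solution to $\N(z) \E(z) = \H$ is uniquely determined.

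No genuine obstacle is anticipated here: the corollary is essentially the statement that the assignment $\H \mapsto \E(z)$ is a $\mathbb{C}$-linear operator, and so is determined by its values on the basis $\{\C_t\}$ — values which have already been computed in Theorem~\ref{thm:inhomog_eq-sol}. The only mildly delicate point is ensuring $\det \N(z) \not\equiv 0$ so that uniqueness over $\mathbb{C}(z)$ is meaningful, and this is dispatched by the determinant formula of Theorem~\ref{thm:det-general} together with the standing hypothesis $\det \C \neq 0$.
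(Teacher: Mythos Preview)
Your proposal is correct and matches the paper's intent: the paper states this corollary without proof, treating it as immediate from the hypothesis $\det\C\neq 0$ (so the $\C_t$ form a basis) together with Theorem~\ref{thm:inhomog_eq-sol} and linearity, which is precisely the argument you give. The only slightly soft spot is your justification that $v\not\equiv 0$---the appeal to ``its role as the denominator'' is circular, and the formulas $v_0=\prod_s\gamma_s^0$, $v_2=\prod_s\gamma_s^2$ do not by themselves guarantee nonvanishing in full generality---but the paper does not address this either, and in the intended application one has $\gamma_s^0=1$, whence $v_0=1$.
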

In what follows, the notation and remarks initially mirror those of Section~\ref{sect:determinant}.  
Throughout this section we fix $t\in\mathbb{Z}_\ell$.
For 
$s\in\mathbb{Z}_\ell$,
define
\begin{equation}
\mathcal{T}_s
\;=\;
\Bigl\{\tau\colon \mathbb{Z}_\ell\setminus \{s\}\to \{0,1,2\}\Bigr\} \ .
\end{equation}
We will also represent $\tau\in\mathcal{T}$ 
by the string over the alphabet 
$\{0,1,2\}$ 
of length $\ell-1$ given by
\begin{equation}\label{eq:tau_string-inhomog}
\tau(s+1)\tau(s+2)\cdots\tau(s-1) \ .
\end{equation}
(Note that the initial and terminal index are different from Section~\ref{sect:determinant}.)
\begin{remark}
As in Section~\ref{sect:determinant}, 
this notation is introduced for the following reason.
From expression~\eqref{eq:O_cols} 
together with multilinearity of the determinant, we find that
\begin{align}\label{eq:Cramer+det_multilinear}
&\det
\left[\begin{array}{ccccccc}
\N_1&\cdots&\N_{s-1}&\C_t&\N_{s+1}&\cdots&\N_\ell
\end{array}\right]\\
&\;=\;
\sum_{\tau\in\mathcal{T}_s}
\det
\left[\begin{array}{ccccccc}
\C_{1;\tau(1)}&\cdots&\C_{s-1;\tau(s-1)}&\C_t&\C_{s+1;\tau(s+1)}&\cdots&\C_{\ell;\tau(\ell)}
\end{array}\right] \ . \notag
\end{align}
For each 
$\tau\in\mathcal{T}_s$, 
multilinearity of the determinant 
and equality~\eqref{eq:C_mp_cols} give
\begin{align}\label{eq:det_monomial-inhomog}
&\det\left[\begin{array}{ccccccc}
\C_{1;\tau(1)}&\cdots&\C_{s-1;\tau(s-1)}&\C_t&\C_{s+1;\tau(s+1)}&\cdots&\C_{\ell;\tau(\ell)}
\end{array}\right]\\
&\;=\;
c(\tau)
\det\left[\begin{array}{ccccccc}
\C_{1+\tau(1)}&\cdots&\C_{s-1+\tau(s-1)}&\C_t&\C_{s+1+\tau(s+1)}&\cdots&\C_{\ell+\tau(\ell)}
\end{array}\right]\notag
\end{align}
where
\begin{equation}
c(\tau)
\;=\;
\prod_{1\leq k\leq \ell : k\neq s} c_k^{\tau(k)} \ .
\end{equation}
Thus again, we will 
(i) ascertain for which $\tau$ the determinant on the right-hand side of~\eqref{eq:det_monomial-inhomog} is non-zero,
and 
(ii) for these $\tau$ we will compute $c(\tau)$.
\end{remark}
For 
$\tau\in\mathcal{T}_s$ 
define 
$\upsilon_\tau\colon\mathbb{Z}_\ell\to\mathbb{Z}_\ell$ 
by
\begin{equation}
\upsilon_\tau(k)\;=\;k+\tau(k) \ .
\end{equation}
This gives a one-to-one correspondence between $\mathcal{T}_s$ and the set
\begin{equation}
\mathcal{U}_{s}
\;=\;
\Bigl\{\upsilon\colon \mathbb{Z}_\ell\setminus\{s\}\to\mathbb{Z}_\ell \ \big\vert \
k\leq \upsilon(k)\leq k+2 \mod \ell, \ \forall k\in\mathbb{Z}_\ell\setminus\{s\} \Bigr\} \ .
\end{equation}
Given $\upsilon\in\mathcal{U}_s$, denote by $\tau_\upsilon$ the element of $\mathcal{T}_s$ corresponding to $\upsilon$. 
As in Section~\ref{sect:determinant}, we say that $\tau\in\mathcal{T}_s$ is {\it decreasing} 
at $k\in\mathbb{Z}_\ell$ if property~\eqref{eq:tau_decreasing_at_k} is 
satisfied. 
(Observe that, for certain $k$, only one of the two statements in~\eqref{eq:tau_decreasing_at_k} may be defined.)
Otherwise we say that $\tau$ is {\it non-decreasing}.
Let
\begin{equation}\label{eq:tau_non-decr+omit_t}
\mathcal{T}_{s,t}^+
\;=\;
\left\{\tau\in\mathcal{T}_s \ : 
\begin{array}{c}
\ \tau \ \mbox{non-decreasing and, whenever they are defined,} \\  
\tau(t-2)-2, \tau(t-1)-1, \tau(t)\neq 0
\end{array}
\right\}
\end{equation}
and denote by $\mathcal{U}_{s,t}^+$ the corresponding subset of $\mathcal{U}_s$.
Then it is straightforward to show that
\begin{equation}
\mathcal{U}_{s,t}^+
\;=\;
\Big\{\upsilon\in\mathcal{U}_s \ : \ \upsilon \ \mbox{injective, and} \ \upsilon(k)\neq t, \forall k\in\mathbb{Z}_\ell\Bigr\} \ .
\end{equation}
Given $\upsilon\in \mathcal{U}_{s}$ we may extend it to $\mathbb{Z}_\ell$ via
\begin{equation}
\bar\upsilon(k)
\;=\;
\left\{\begin{array}{ll}
\upsilon(k) & k\in\mathbb{Z}_\ell\setminus\{s\}\\
t & k=s
\end{array}\right. \ .
\end{equation}
Thus, $\upsilon\in\mathcal{U}_{s,t}^+$ implies that
$\upsilon$ is a bijection between the sets 
$\mathbb{Z}_\ell\setminus\{s\}$ and 
$\mathbb{Z}_\ell\setminus\{t\}$,
and we consequently get the following.
\begin{proposition}
For $\upsilon\in\mathcal{U}_s$, 
$\bar\upsilon\in\mathcal{S}_\ell$ 
only if $\upsilon\in\mathcal{U}_{s,t}^+$.
\end{proposition}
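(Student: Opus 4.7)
The plan is to use the alternative description of $\mathcal{U}_{s,t}^+$ recorded immediately above the proposition, namely that $\upsilon\in\mathcal{U}_s$ lies in $\mathcal{U}_{s,t}^+$ precisely when $\upsilon$ is injective and its image avoids $t$. Granted that description, the proposition becomes almost tautological: if $\bar\upsilon\in\mathcal{S}_\ell$, then its restriction $\upsilon=\bar\upsilon|_{\mathbb{Z}_\ell\setminus\{s\}}$ is injective, and since $\bar\upsilon(s)=t$, injectivity of $\bar\upsilon$ prevents any $k\in\mathbb{Z}_\ell\setminus\{s\}$ from satisfying $\upsilon(k)=t$. Both clauses of the alternative description are therefore met.

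The substantive content is thus the verification of that alternative description, which I would carry out first. I would split it into two separate equivalences. The first, ``$\tau_\upsilon$ non-decreasing $\iff$ $\upsilon$ injective'', mirrors Proposition~\ref{prop:U-permutation}: in the forward direction a decreasing point $k$ produces either $\upsilon(k+1)=\upsilon(k)$ (when $\tau(k+1)=\tau(k)-1$) or $\upsilon(k+2)=\upsilon(k)$ (when $\tau(k+2)=\tau(k)-2$), so $\upsilon$ fails to be injective; in the reverse direction, any collision $\upsilon(k_1)=\upsilon(k_2)$ with $k_1\neq k_2$ must satisfy $|k_1-k_2|\leq 2\pmod\ell$, by the range restriction $k\leq\upsilon(k)\leq k+2$, and a short case analysis recovers a decreasing point.

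The second equivalence identifies the three avoidance clauses $\tau(t-2)\neq 2$, $\tau(t-1)\neq 1$, $\tau(t)\neq 0$ in~\eqref{eq:tau_non-decr+omit_t} with the condition $\upsilon(k)\neq t$ for all $k\in\mathbb{Z}_\ell\setminus\{s\}$. This is immediate: $\upsilon(k)=t$ amounts to $\tau_\upsilon(k)=t-k\pmod\ell$, and since $\tau_\upsilon(k)\in\{0,1,2\}$ this can only happen when $k\in\{t,t-1,t-2\}$, which are exactly the three cases singled out.

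The main (and very mild) obstacle is handling the omitted index $s$ carefully throughout: every statement about decreasing points and about the three avoidance conditions must be qualified by ``whenever defined'', as is already done in~\eqref{eq:tau_non-decr+omit_t}. Once that bookkeeping is respected the argument goes through exactly as in Proposition~\ref{prop:U-permutation}, and the one-line reduction in the first paragraph delivers the proposition.
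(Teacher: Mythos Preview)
Your proposal is correct and follows essentially the same route as the paper: the paper does not give a separate proof of this proposition but derives it from the alternative description of $\mathcal{U}_{s,t}^+$ (stated just above as ``straightforward to show'') together with the remark that $\upsilon\in\mathcal{U}_{s,t}^+$ makes $\upsilon$ a bijection $\mathbb{Z}_\ell\setminus\{s\}\to\mathbb{Z}_\ell\setminus\{t\}$. Your argument is the same reduction, with the added service of actually spelling out the two equivalences behind that alternative description, mirroring Proposition~\ref{prop:U-permutation} as you note.
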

Observe that the function $\bar\upsilon$ denotes the lower index in the right-hand side of equation~\eqref{eq:det_monomial-inhomog}.
The preceding proposition automatically implies the following. 
\begin{corollary}\label{cor:decr+non-decr-inhomog}
Let $\tau\in \mathcal{T}_s$. 
If $\tau\notin\mathcal{T}_{s,t}^+$ then
\begin{equation}
\det
\left[\begin{array}{ccccccc}
\C_{1;\tau(1)}&\cdots&\C_{s-1;\tau(s-1)}&\C_t&\C_{s+1;\tau(s+1)}&\cdots&\C_{\ell;\tau(\ell)}
\end{array}\right]
\;=\;0 \ .
\end{equation}
Otherwise $\tau\in\mathcal{T}_{s,t}^+$ and
\begin{align}
&\det
\left[\begin{array}{ccccccc}
\C_{1;\tau(1)}&\cdots&\C_{s-1;\tau(s-1)}&\C_t&\C_{s+1;\tau(s+1)}&\cdots&\C_{\ell;\tau(\ell)}
\end{array}\right]\notag\\
&\;=\;
(-1)^{\sgn(\bar\upsilon)}
c(\tau)
\det\C
 \ ,
\end{align}
where 
$\sgn(\bar\upsilon)$ denotes parity of the number of adjacent transpositions of $\bar\upsilon$.
\end{corollary}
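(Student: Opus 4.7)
The plan is to mirror the proof pattern used for Corollary~\ref{cor:decr+non-decr}, adapted to the modified setting where the $s$th column is the fixed vector $\C_t$. Starting from equation~\eqref{eq:det_monomial-inhomog}, multilinearity lets me pull out the scalar factor $c(\tau)$ and reduce matters to evaluating the determinant of the $\ell \times \ell$ matrix whose $k$th column is $\C_{\bar\upsilon(k)}$, where $\bar\upsilon$ is the extension of $\upsilon_\tau$ to $\mathbb{Z}_\ell$ defined by $\bar\upsilon(s) = t$.

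Next, since $\det\C \neq 0$, the columns $\C_1, \ldots, \C_\ell$ are linearly independent, so the reduced determinant vanishes precisely when $\bar\upsilon$ fails to be injective. I unpack the condition $\tau \in \mathcal{T}_{s,t}^+$ by cataloguing two types of potential collisions: either (i) $\tau$ is decreasing at some index $k \neq s$, in which case the same case analysis as in Proposition~\ref{prop:U-permutation} produces distinct indices in $\mathbb{Z}_\ell \setminus \{s\}$ mapping to the same value; or (ii) one of the boundary conditions $\tau(t-2) = 2$, $\tau(t-1) = 1$, or $\tau(t) = 0$ holds at a defined argument $k$, forcing $\bar\upsilon(k) = t = \bar\upsilon(s)$. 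The definition of $\mathcal{T}_{s,t}^+$ in~\eqref{eq:tau_non-decr+omit_t} excludes precisely both obstructions, so $\bar\upsilon \in \mathcal{S}_\ell$ if and only if $\tau \in \mathcal{T}_{s,t}^+$, which settles the vanishing clause.

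For $\tau \in \mathcal{T}_{s,t}^+$, the preceding proposition gives $\bar\upsilon \in \mathcal{S}_\ell$, so the columns of the reduced matrix are a permutation of $\C_1, \ldots, \C_\ell$, and the standard sign formula for column reorderings yields $\det[\C_{\bar\upsilon(1)}, \ldots, \C_{\bar\upsilon(\ell)}] = (-1)^{\sgn(\bar\upsilon)}\det\C$. Combining with the factor $c(\tau)$ produces the claimed identity. The only subtlety not already present in the proof of Corollary~\ref{cor:decr+non-decr} is verifying that the three boundary exclusions in~\eqref{eq:tau_non-decr+omit_t} account for \emph{exactly} the collisions $\bar\upsilon(k) = t$ with $k \neq s$; this is routine but must be spelled out carefully, and I expect it to be the main bookkeeping hurdle in turning this plan into a full proof.
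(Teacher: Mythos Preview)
Your proposal is correct and matches the paper's approach: the paper simply declares the corollary an immediate consequence of the preceding proposition (together with the characterization of $\mathcal{U}_{s,t}^+$ as the injective $\upsilon$ avoiding $t$), and you have spelled out exactly those details. The only unnecessary step is invoking $\det\C\neq 0$ --- the vanishing clause already follows from having two proportional columns, and the column-permutation identity $\det[\C_{\bar\upsilon(1)},\ldots,\C_{\bar\upsilon(\ell)}]=(-1)^{\sgn(\bar\upsilon)}\det\C$ holds regardless --- but this does no harm.
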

This resolves the first problem (i). 
Now we wish to compute $c(\tau)$ for $\tau\in\mathcal{T}_{s,t}^+$.
The first step is to observe, by the conditions given 
in~\eqref{eq:tau_non-decr+omit_t}, that we have the following.
(See Figure~\ref{fig:tau-omitted_value}.)
\begin{proposition}\label{prop:tau-omitted_value}
Let $\tau\in\mathcal{T}_{s,t}^+$.
Then
\begin{enumerate}
\item
For $s=t$,
\begin{equation}\label{eq:case-s=t}
\tau(t-2)\tau(t-1)\;=\;
00
\ \ \mbox{or} \ \  
^1_0 2 \ \ .
\end{equation}
\item
For $s=t-1$,
\begin{equation}
\tau(t-2)\tau(t)\;=\;
^1_0 \phantom{\!}^2_1 \ \ .
\end{equation}
\item
For $s=t-2$,
\begin{equation}
\tau(t-1)\tau(t)\;=\;
22
\ \ \mbox{or} \ \
0 ^2_1 \ \ .
\end{equation}
\item
For 
$s\neq t,t-1$ or $t-2$, 
either
\begin{equation}
\tau(t-2)\tau(t-1)\tau(t)\;=\; 
00^2_1 \ \ ,
\end{equation}
or
\begin{equation}
\tau(t-2)\tau(t-1)\tau(t)\;=\;
^1_0\!22 \ \ .
\end{equation}
\end{enumerate}
\end{proposition}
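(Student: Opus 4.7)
The plan is a direct enumeration from the definition of $\mathcal{T}_{s,t}^+$, combining the three "forbidden value" conditions with the non-decreasing property restricted to the three-symbol window $\tau(t-2)\tau(t-1)\tau(t)$. First I would read off from the definition in~\eqref{eq:tau_non-decr+omit_t} that, whenever the relevant site lies in the domain $\mathbb{Z}_\ell\setminus\{s\}$ of $\tau$, one has $\tau(t-2)\in\{0,1\}$, $\tau(t-1)\in\{0,2\}$, and $\tau(t)\in\{1,2\}$. Next I would translate the non-decreasing condition, via~\eqref{eq:tau_decreasing_at_k}, into the three local prohibitions that are available inside the window, namely $\tau(t-1)\neq \tau(t-2)-1$, $\tau(t)\neq \tau(t-1)-1$, and $\tau(t)\neq \tau(t-2)-2$, each imposed only when both values involved are defined.

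With these ingredients in hand the proof is a short case analysis driven by whether $s$ equals $t$, $t-1$, $t-2$, or none of these. In the case $s=t$ only the first prohibition is in force; substituting $\tau(t-2)\in\{0,1\}$ one obtains the three pairs $00,02,12$, matching $00$ or $^{1}_{0}2$. In the case $s=t-1$ only the third prohibition is meaningful and it is automatically satisfied since $\tau(t-2)\in\{0,1\}$ gives $\tau(t-2)-2<0$, so all four pairs $^{1}_{0}\phantom{\!}^{2}_{1}$ survive. In the case $s=t-2$ only the middle prohibition survives: with $\tau(t-1)=0$ both values of $\tau(t)$ are allowed, while with $\tau(t-1)=2$ one must take $\tau(t)=2$, producing $01,02,22$, i.e.\ $22$ or $0^{2}_{1}$. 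Finally in the generic case $s\notin\{t-2,t-1,t\}$ all three prohibitions are in force; splitting on $\tau(t-2)$, the value $0$ allows $(\tau(t-1),\tau(t))\in\{(0,1),(0,2),(2,2)\}$ and the value $1$ forces $\tau(t-1)=2$ and then $\tau(t)=2$, yielding exactly $001,002,022,122$, which is $00^{2}_{1}$ or $^{1}_{0}22$.

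There is no real obstacle beyond bookkeeping: the main point is to be careful about which of the three prohibitions are even defined in each of the four cases, since prohibitions referring to an omitted index $s$ must be dropped from the analysis. A secondary point worth emphasising in the write-up is that the constraints in the definition of $\mathcal{T}_{s,t}^+$ are precisely what is needed to ensure, via the preceding proposition, that the extension $\bar\upsilon_\tau$ is a permutation of $\mathbb{Z}_\ell$; the list produced by the case analysis is therefore exactly the list of $\tau$'s that can contribute to the determinant in~\eqref{eq:det_monomial-inhomog}.
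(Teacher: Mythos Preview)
Your argument is correct and is essentially the same direct case analysis as the paper's (commented-out) proof: read off the forbidden values $\tau(t-2)\neq 2$, $\tau(t-1)\neq 1$, $\tau(t)\neq 0$ from~\eqref{eq:tau_non-decr+omit_t}, drop whichever of these and of the three local non-decreasing constraints involves the omitted index $s$, and enumerate. The one mild difference is in case~(1): the paper handles $s=t$ by splitting on whether $\tau\equiv 0$ globally and invoking Lemma~\ref{lem:key} to exclude a terminal block $00$ otherwise, whereas you simply list the four pairs in $\{0,1\}\times\{0,2\}$ and discard $10$ via the single available prohibition $\tau(t-1)\neq\tau(t-2)-1$; your route is the more economical one for the proposition as stated.
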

%
\begin{figure}[h] 
\centering
\psfrag{t}[][]{${\scriptstyle t}$}
\psfrag{t-1}[][]{${\scriptstyle t-1}$} 
\psfrag{t-2}[][]{${\scriptstyle t-2}$} 
\psfrag{0}[][]{${\scriptstyle 0}$} 
\psfrag{1}[][]{${\scriptstyle 1}$} 
\psfrag{2}[][]{${\scriptstyle 2}$} 
\begin{subfigure}[t]{1.0\textwidth}
\centering
\includegraphics[width=0.725\textwidth]{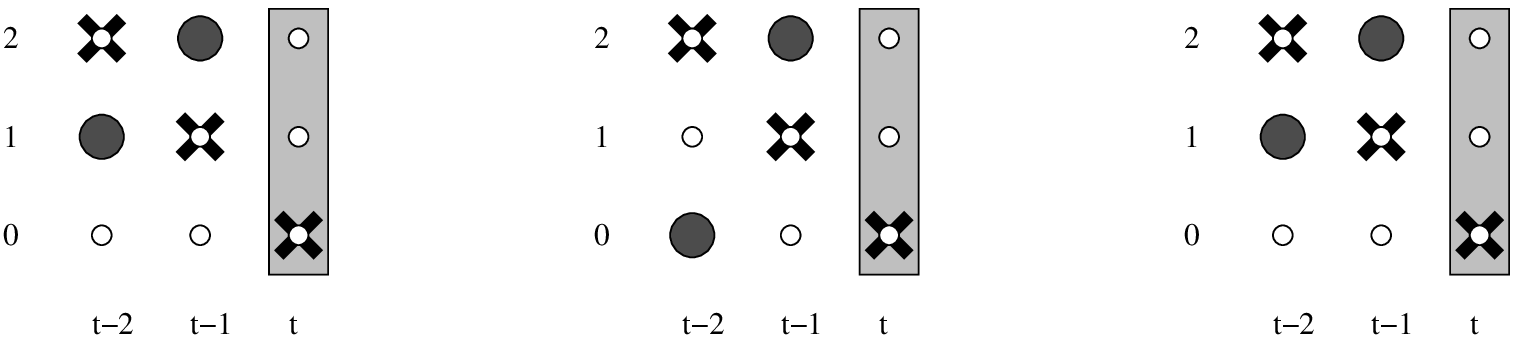}
\caption{Permitted configurations for $s=t$}
\label{fig:s=t}
\end{subfigure}
\\[10pt]
\begin{subfigure}[t]{1.0\textwidth}
\centering
\includegraphics[width=1.0\textwidth]{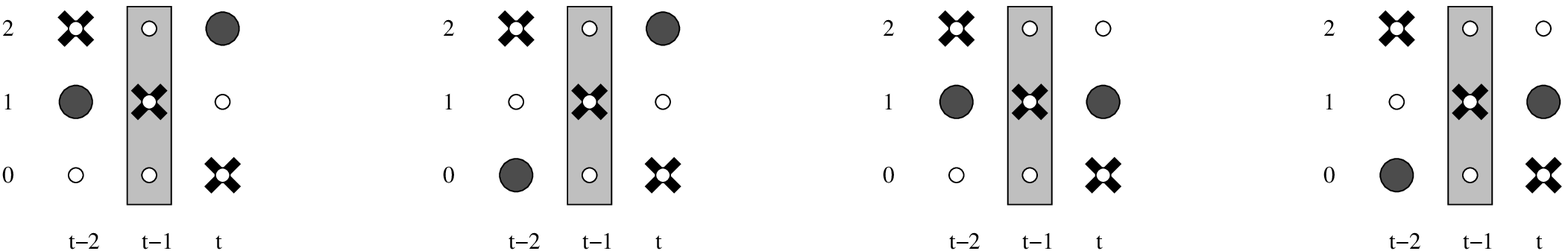}
\caption{Permitted configurations for $s=t-1$}
\label{fig:s=t-1}
\end{subfigure}
\\[10pt]
\begin{subfigure}[t]{1.0\textwidth}
\centering
\includegraphics[width=0.725\textwidth]{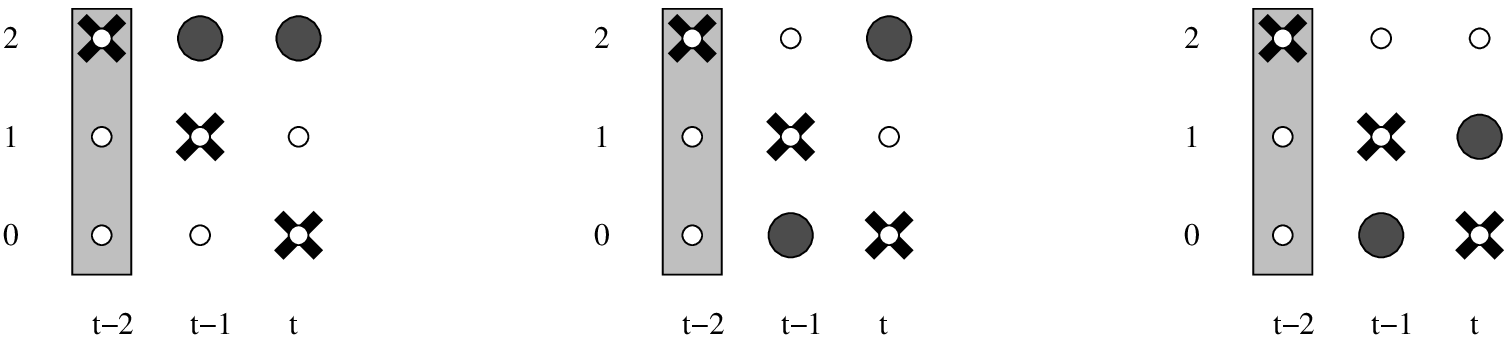}
\caption{Permitted configurations for $s=t-2$}
\label{fig:s=t-2}
\end{subfigure}
\\[10pt]
\begin{subfigure}[t]{1.0\textwidth}
\includegraphics[width=1.0\textwidth]{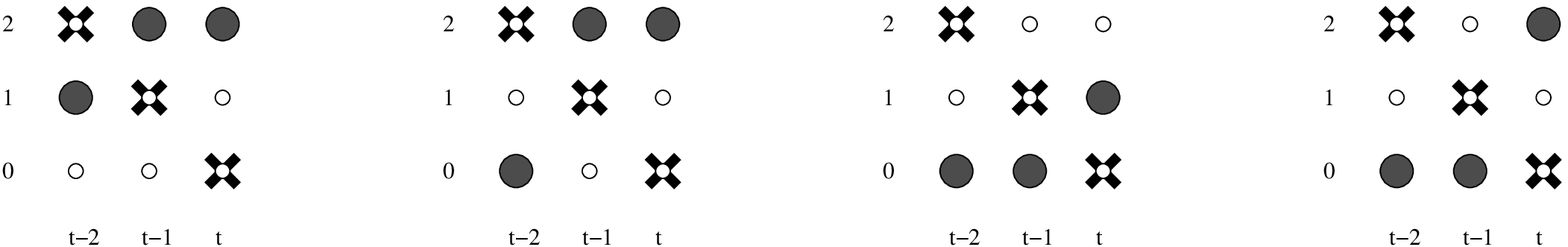}
\caption{Permitted configurations for $s\neq t, t-1, t-2$.}
\label{fig:s_general}
\end{subfigure}
\caption[slope]{\label{fig:config_permitted}
The graph of configurations $\tau$ in $\mathcal{T}_{s,t}^+$ restricted to the interval $[t-2,t]$ 
in the cases $s=t$, $s=t-1$, $s=t-2$ and otherwise, as prescribed by Proposition~\ref{prop:tau-omitted_value}.
The grey rectangles are to remind the reader that $\tau$ has no value there, 
the black crosses denote values which are prohibited by equation~\eqref{eq:tau_non-decr+omit_t},
and the black circles denote the value of $\tau$.
}
\label{fig:tau-omitted_value}
\end{figure} 
For 
$\tau\in\mathcal{T}_{s,t}^+$ 
we require $\tau$ to be non-decreasing on the interval $[s+1,s-1]$ in $\mathbb{Z}_\ell$.
Therefore, applying Lemma~\ref{lem:key} together with Proposition~\ref{prop:tau-omitted_value}, 
we also get the following.
\begin{corollary}\label{cor:tau-omitted_value}
Let $\tau\in\mathcal{T}_{s,t}^+$.
\begin{enumerate}
\item
For $s=t$, the string corresponding to $\tau$ has the form
\begin{equation}
0^{\ell-1} \ \ \mbox{or} \ \ 1^{\lambda_1}b\cdots 1^{\lambda_{r-1}}b1^{\lambda_r}2 \ .
\end{equation}
\item
For $s=t-1$, the string corresponding to $\tau$ has the form
\begin{equation}
1^{\lambda_1}b\cdots 1^{\lambda_{r-1}}b1^{\lambda_r} \ .
\end{equation}
\item
For $s=t-2$, the string corresponding to $\tau$ has the form
\begin{equation}
2^{\ell-1} \ \ \mbox{or} \ \ 01^{\lambda_1}b\cdots 1^{\lambda_{r-1}}b1^{\lambda_r} \ .
\end{equation}
\item
For $s\neq t,t-1,t-2$, the string corresponding to $\tau$ either has the form
\begin{equation}
0^\kappa1^{\lambda_1}b\cdots 1^{\lambda_{r-1}}b1^{\lambda_r}
\end{equation}
or
\begin{equation}
1^{\lambda_1}b\cdots 1^{\lambda_{r-1}}b1^{\lambda_r}2^\mu \ .
\end{equation}
where 
\begin{equation}
\kappa\;=\;\card [s+1,t-1] \ \ \mbox{and} \ \ \mu\;=\;\card [t-1,s-1] \ .
\end{equation}
\end{enumerate}
\end{corollary}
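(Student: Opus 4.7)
The plan is to combine Lemma~\ref{lem:key}, applied to the closed interval $[s+1, s-1]$ in $\mathbb{Z}_\ell$, with Proposition~\ref{prop:tau-omitted_value}, supplemented by one additional boundary condition coming from cyclic non-decreasingness.

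First, fix $\tau \in \mathcal{T}_{s,t}^+$. Since $\tau$ is non-decreasing on $[s+1, s-1]$, Lemma~\ref{lem:key} gives that the string $\tau(s+1) \tau(s+2) \cdots \tau(s-1)$ has the shape
\begin{equation*}
0^\kappa 1^{\lambda_1} b 1^{\lambda_2} b \cdots b 1^{\lambda_r} 2^\mu
\end{equation*}
for some non-negative integers $\kappa, \lambda_1, \ldots, \lambda_r, \mu$. The key additional observation is that, because $\tau(s)$ is undefined, the non-decreasing condition at $k = s-1$ collapses to the single constraint $\tau(s+1) \neq \tau(s-1) - 2$; this rules out any string that simultaneously begins with $0$ and ends with $2$. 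In other words, $\kappa \geq 1$ and $\mu \geq 1$ cannot both occur.

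With this in hand I would do the case analysis, using Proposition~\ref{prop:tau-omitted_value} to control the behaviour at positions $t-2, t-1, t$. For case (1), $s = t$: part (1) of the Proposition forces $\tau(t-1) \in \{0, 2\}$. If $\tau(t-1) = 0$ the string ends in $0$ and the shape above forces it to equal $0^{\ell-1}$; if $\tau(t-1) = 2$ then $\tau(t-2) \neq 2$ gives $\mu = 1$ and the cyclic obstruction forces $\kappa = 0$. Case (2), $s = t-1$, is immediate: $\tau(t) \neq 0$ and $\tau(t-2) \neq 2$ are exactly the constraints on the first and last characters of the string, so $\kappa = \mu = 0$. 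Case (3), $s = t-2$, is dual to case (1): either $\tau(t-1) = 2$ (and the string is $2^{\ell-1}$) or $\tau(t-1) = 0$, in which case $\tau(t) \neq 0$ forces $\kappa = 1$ and the cyclic obstruction gives $\mu = 0$.

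For case (4), $s \neq t, t-1, t-2$, part (4) of Proposition~\ref{prop:tau-omitted_value} gives two sub-cases. In the first, $\tau(t-2) = \tau(t-1) = 0$ with $\tau(t) \neq 0$: the entries at every position strictly between $s$ and $t$ are forced into the leading $0^\kappa$ block, and the non-zero value at position $t$ terminates it, so $\kappa = \card[s+1, t-1]$; the cyclic obstruction then forces $\mu = 0$. In the second sub-case, $\tau(t-1) = \tau(t) = 2$ with $\tau(t-2) \neq 2$: dually, $\mu = \card[t-1, s-1]$ and $\kappa = 0$. The main obstacle is the careful bookkeeping of cyclic indices and verifying that in the boundary regimes of cases (1) and (3) the only admissible strings are indeed $0^{\ell-1}$ and $2^{\ell-1}$ — this follows because any non-decreasing string of the shape above that ends in $0$ (respectively, begins with $2$) must be entirely $0$ (respectively, $2$).
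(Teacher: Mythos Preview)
Your approach matches the paper's: both deduce the Corollary by applying Lemma~\ref{lem:key} to the interval $[s+1,s-1]$ and then pinning down $\kappa$ and $\mu$ via Proposition~\ref{prop:tau-omitted_value}, and your explicit ``cyclic obstruction'' $\tau(s+1)\neq\tau(s-1)-2$ is exactly the wrap-around constraint the paper uses implicitly.

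One correction, though: your closing assertion that ``any non-decreasing string of the shape above that ends in $0$ must be entirely $0$'' is false as stated --- the string $1b=120$ is non-decreasing and ends in $0$. What you actually need in case~(1) is that Proposition~\ref{prop:tau-omitted_value}(1) gives $\tau(t-2)\tau(t-1)=00$ (not merely $\tau(t-1)=0$); the point is then that two \emph{consecutive} $0$'s can only lie in the leading block $0^\kappa$, which forces the middle segment to be empty and hence $\kappa=\ell-1$. The dual fix applies in case~(3): use $\tau(t-1)\tau(t)=22$ and the fact that two consecutive $2$'s can only lie in the trailing block $2^\mu$.
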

We say that 
$\tau\in\mathcal{T}_{s,t}^+$ 
is of 
{\it type $(\kappa,\mu)$}
if the string corresponding to $\tau$ 
has the form
\begin{align}\label{eq:def-type}
0^\kappa1^{\lambda_1}b\cdots 1^{\lambda_{r-1}}b1^{\lambda_r}2^{\mu} \ ,
\end{align}
where 
$\kappa$, $\lambda_1,\ldots,\lambda_r$, and $\mu$, are non-negative integers.
(See figure~\ref{fig:configuration_eg}.)
Observe that, since $0^{\ell-1}$ is the unique type $(\ell-1,0)$ and $2^{\ell-1}$ is the unique type $(0,\ell-1)$,
Corollary~\ref{cor:tau-omitted_value} can be rephrased as follows: 
For $\tau\in\mathcal{T}_{s,t}^+$,
\begin{enumerate}
\item 
if $s=t$, then $\tau$ is either of type $(\ell-1,0)$ or type $(0,1)$;
\item 
if $s=t-1$, then $\tau$ of type $(0,0)$;
\item 
if $s=t-2$, then $\tau$ is either of type $(0,\ell-1)$ and type $(1,0)$;
\item 
if $s\neq t,t-1,t-2$, then $\tau$ is either of type $(\kappa,0)$ or type $(0,\mu)$, where $\kappa=\card[s+1,t-1]$ and $\mu=\card[t-1,s-1]$.
\end{enumerate}
If we define $\kappa(s,t)$ and $\mu(s,t)$ as in~\eqref{eq:kappa_def} and~\eqref{eq:mu_def}
then this becomes the following.
\begin{corollary}\label{cor:tau-omitted_value-types}
$\mathcal{T}_{s,t}^+$ 
consists exactly of types $(\kappa,0)$ and types $(0,\mu)$.
\end{corollary}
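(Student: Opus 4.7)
The plan is to verify the claim by reading off each of the four cases in Corollary~\ref{cor:tau-omitted_value} and checking that the permitted strings are exactly those of type $(\kappa(s,t),0)$ or $(0,\mu(s,t))$, where $\kappa(s,t)$ and $\mu(s,t)$ are given by~\eqref{eq:kappa_def} and~\eqref{eq:mu_def}. Since Corollary~\ref{cor:tau-omitted_value} already does almost all of the combinatorial work (classifying the admissible strings under the non-decreasing condition together with the constraints imposed by omitting $t$ from the image), what remains is essentially a bookkeeping exercise to confirm that the numerical values of $\kappa$ and $\mu$ appearing in Corollary~\ref{cor:tau-omitted_value} agree with those prescribed in~\eqref{eq:kappa_def} and~\eqref{eq:mu_def}.

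Concretely, I would first recall the definition~\eqref{eq:def-type}: $\tau$ is of type $(\kappa,\mu)$ when the corresponding string has initial run of $0$'s of length $\kappa$, terminal run of $2$'s of length $\mu$, and in between a (possibly empty) alternation $1^{\lambda_1}b\cdots b1^{\lambda_r}$. Note in particular that the distinguished degenerate strings $0^{\ell-1}$ and $2^{\ell-1}$ are the unique representatives of types $(\ell-1,0)$ and $(0,\ell-1)$ respectively. With this in hand, I would go through the four cases:
\begin{enumerate}
\item If $s=t$, Corollary~\ref{cor:tau-omitted_value}(1) gives either $0^{\ell-1}$ or $1^{\lambda_1}b\cdots b1^{\lambda_r}2$, which are types $(\ell-1,0)$ and $(0,1)$, matching $\kappa(t,t)=\ell-1$ and $\mu(t,t)=1$.
\item If $s=t-1$, Corollary~\ref{cor:tau-omitted_value}(2) gives $1^{\lambda_1}b\cdots b1^{\lambda_r}$, which is type $(0,0)$, matching $\kappa(t-1,t)=\mu(t-1,t)=0$.
\item If $s=t-2$, Corollary~\ref{cor:tau-omitted_value}(3) gives $2^{\ell-1}$ or $01^{\lambda_1}b\cdots b1^{\lambda_r}$, yielding types $(0,\ell-1)$ and $(1,0)$, which match $\kappa(t-2,t)=1$ and $\mu(t-2,t)=\ell-1$.
\item In the remaining case $s\neq t,t-1,t-2$, Corollary~\ref{cor:tau-omitted_value}(4) directly exhibits the two forms with $\kappa=\card[s+1,t-1]$ and $\mu=\card[t-1,s-1]$, which are by definition $\kappa(s,t)$ and $\mu(s,t)$.
\end{enumerate}

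Conversely, every string of type $(\kappa(s,t),0)$ or $(0,\mu(s,t))$ clearly defines an element of $\mathcal{T}_{s,t}^+$ (it is non-decreasing, and one checks using Proposition~\ref{prop:tau-omitted_value} that the forbidden values $\tau(t-2)=2$, $\tau(t-1)=1$, $\tau(t)=0$ do not occur in any of the enumerated shapes at the relevant positions), which completes the double inclusion.

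There is essentially no hard step here: all the real work has been done upstream in Lemma~\ref{lem:key}, Proposition~\ref{prop:tau-omitted_value}, and Corollary~\ref{cor:tau-omitted_value}. The only point requiring a moment's care is the edge-case identification, namely recognising that the definitions of $\kappa(s,t)$ and $\mu(s,t)$ in~\eqref{eq:kappa_def} and~\eqref{eq:mu_def} have been tailored precisely so that the three special configurations ($s=t$, $s=t-1$, $s=t-2$) collapse cleanly into the uniform description by types $(\kappa,0)$ and $(0,\mu)$; once this bookkeeping is laid out, the corollary is immediate.
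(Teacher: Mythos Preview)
Your proposal is correct and follows essentially the same route as the paper: the paper's ``proof'' is the paragraph immediately preceding the corollary, which rephrases Corollary~\ref{cor:tau-omitted_value} case by case into the language of types and then observes that the definitions~\eqref{eq:kappa_def} and~\eqref{eq:mu_def} of $\kappa(s,t)$ and $\mu(s,t)$ are tailored to absorb the three exceptional cases. You are slightly more explicit than the paper in checking the converse inclusion, but this is a welcome clarification rather than a different argument.
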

\begin{proposition}\label{prop:b_and_beta}
Suppose that 
\begin{equation}
c^p_s(z)\;=\;\gamma^p_sz^p \ \mbox{for some} \ \gamma^p_s\in\mathbb{C} \ , \qquad \forall s=1,2,\ldots,\ell \ , \ \forall p=0,1,2 \ .
\end{equation}
If we define
\begin{align}
b_{s,t}^{\kappa,0}(z)
&\;=\;
\sum_{\substack{\tau\in\mathcal{T}_{s,t}^+ \\ \tau \ \mbox{\tiny type} \ (\kappa,0)}}
(-1)^{\sgn(\bar\upsilon_\tau)}c(\tau)(z)
\end{align}
and
\begin{align}
b_{s,t}^{0,\mu}(z)
&\;=\;
\sum_{\substack{\tau\in\mathcal{T}_{s,t}^+ \\ \tau \ \mbox{\tiny type} \ (0,\mu)}}
(-1)^{\sgn(\bar\upsilon_\tau)}c(\tau)(z)
\end{align}
then
\begin{equation}\label{eq:b_in_terms_of_beta}
b_{s,t}^{\kappa,0}(z)
\;=\;z^{(\ell-1)-\kappa}u_{s,t}^{\kappa,0} \quad \mbox{and} \quad 
b_{s,t}^{0,\mu}(z)
\;=\;z^{(\ell-1)+\mu}u_{s,t}^{0,\mu} \ ,
\end{equation}
where 
$u_{s,t}^{\kappa,0}$ 
and 
$u_{s,t}^{0,\mu}$ 
satisfy respectively the recurrence relations
~\eqref{eq:kappa_recurrence_rel} and
~\eqref{eq:mu_recurrence_rel}.
\begin{comment}
i.e.,
\begin{equation}
\begin{gathered}
u_{s,t}^{\kappa,0}
=
\frac{-1}{\gamma_{s}^{0}}\left(
\gamma_{s-1}^{1}u_{s-1, t}^{\kappa+1, 0}+\gamma_{s-2}^{2}u_{s-2, t}^{\kappa+2, 0}
\right)\\
u_{t,t}^{\ell-1,0}=\prod_{t+1\leq k\leq t-1}\gamma_k^0\qquad\qquad
u_{t+1,t}^{\ell-2,0}=-\gamma_{t}^1\prod_{t+2\leq k\leq t-1}\gamma_k^0
\end{gathered}
\label{eq:sum_(kappa,0)-recurrence}
\end{equation}
and
\begin{equation}
\begin{gathered}
u_{s, t}^{0,\mu}
=
\frac{-1}{\gamma_{s}^{2}}\left(
\gamma_{s+1}^{1}u_{s+1, t}^{0, \mu+1}+\gamma_{s+2}^{0}u_{s+2, t}^{0, \mu+2}
\right)\\
u_{t-2,t}^{0,\ell-1}=\prod_{t-1\leq k\leq t-3}\gamma_k^2\qquad \qquad
u_{t-3,t}^{0,\ell-2}=-\gamma_{t-2}^{1}\prod_{t-1\leq k\leq t-4}\gamma_k^2
\end{gathered}
\label{eq:sum_(0,mu)-recurrence}
\end{equation}
\end{comment}
\end{proposition}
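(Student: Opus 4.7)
The plan is to establish the two parts separately. The factorization $b_{s,t}^{\kappa,0}(z) = z^{(\ell-1)-\kappa} u_{s,t}^{\kappa,0}$ is immediate from a degree count: for any $\tau$ of type $(\kappa,0)$, the string $0^\kappa 1^{\lambda_1} b \cdots b 1^{\lambda_r}$ of length $\ell-1$ satisfies $\sum_{k}\tau(k) = (\lambda_1 + \cdots + \lambda_r) + 2(r-1) = \ell-1-\kappa$, independently of the block data. Under the monomial hypothesis this gives $c(\tau)(z) = \gamma(\tau)\,z^{\ell-1-\kappa}$, where $\gamma(\tau) = \prod_{k \neq s}\gamma_k^{\tau(k)}$, and factoring the common $z$-power out of the sum defining $b_{s,t}^{\kappa,0}(z)$ yields the claim with $u_{s,t}^{\kappa,0} = \sum_{\tau}(-1)^{\sgn(\bar\upsilon_\tau)}\gamma(\tau)$. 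An analogous count for type $(0,\mu)$ gives $\sum_{k}\tau(k) = \ell-1+\mu$, establishing the second factorization.

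For the recurrence~\eqref{eq:kappa_recurrence_rel}, first observe that $t \equiv s + \kappa + 1 \pmod \ell$ is preserved under $(s,\kappa) \mapsto (s-j,\kappa+j)$ for $j = 1, 2$, so all three $u$'s in the recurrence refer to the same value of $t$. The plan is to split the sum defining $u_{s,t}^{\kappa,0}$ according to the value $\tau(s-1) \in \{0,1\}$ (given by Corollary~\ref{cor:tau-omitted_value}). If $\tau(s-1) = 1$, truncating the string at $s-2$ gives a bijection with the type $(\kappa+1,0)$ configurations $\tau'$ in $\mathcal{T}_{s-1,t}^+$, with weight ratio $\gamma(\tau)/\gamma(\tau') = \gamma_{s-1}^1/\gamma_s^0$; the key sign identity $\bar\upsilon_\tau = (s\;t) \circ \bar\upsilon_{\tau'}$ (where $(s\;t)$ denotes the transposition in $\mathcal{S}_\ell$) then produces an overall sign flip, so this case contributes $-\gamma_{s-1}^1 u_{s-1,t}^{\kappa+1,0}$ to $\gamma_s^0 u_{s,t}^{\kappa,0}$. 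If $\tau(s-1) = 0$, then necessarily $\tau(s-2) = 2$ (tail of a $b = 20$ block); truncating at $s-3$ gives a bijection with type $(\kappa+2,0)$ configurations $\tau''$ in $\mathcal{T}_{s-2,t}^+$ with weight ratio $\gamma_{s-2}^2/\gamma_s^0$, and the analogous identity $\bar\upsilon_\tau = (s\;t) \circ \bar\upsilon_{\tau''}$ gives the contribution $-\gamma_{s-2}^2 u_{s-2,t}^{\kappa+2,0}$. Summing both cases yields exactly~\eqref{eq:kappa_recurrence_rel}.

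The dual recurrence~\eqref{eq:mu_recurrence_rel} follows from the symmetric argument, splitting on the first character $\tau(s+1)$ of type $(0,\mu)$ configurations. The main obstacle is the careful position-by-position verification of the sign identities $\bar\upsilon_\tau = (s\;t) \circ \bar\upsilon_{\tau'}$; one uses that both sides send $s \mapsto t$ via the extension convention, agree at positions $s-1$ or $s-2$ by direct computation, and otherwise coincide since $\bar\upsilon_{\tau'}$ avoids the values $s$ and $t$ on the remaining positions, so $(s\;t)$ acts trivially there. The initial conditions of the recurrences are verified by direct inspection: the type $(\ell-1,0)$ case reduces to the unique configuration $\tau \equiv 0$ with $\bar\upsilon_\tau = \id$, while the type $(\ell-2,0)$ case contains a single configuration whose $\bar\upsilon_\tau$ is the transposition $(t\;t+1)$, giving sign $-1$.
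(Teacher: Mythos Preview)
Your proposal is correct and follows essentially the same approach as the paper's proof: the same degree count to extract the $z$-power, the same decomposition of $\mathcal{T}_{s,t}^+(\kappa,0)$ by the terminal character $\tau(s-1)\in\{0,1\}$ (respectively the initial character $\tau(s+1)$ for the dual), the same bijections with $\mathcal{T}_{s-1,t}^+(\kappa+1,0)$ and $\mathcal{T}_{s-2,t}^+(\kappa+2,0)$, the same weight ratios, and the same sign identity $\bar\upsilon_\tau=(s\ t)\,\bar\upsilon_{\tau'}$. Your verification of the sign identity (checking that $\bar\upsilon_{\tau'}$ avoids $\{s,t\}$ on the residual positions so that $(s\ t)$ acts trivially there) is slightly more explicit than the paper's, which instead counts adjacent transpositions in $(t\ s)$, but the content is identical.
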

%
\begin{figure}[ht] 
\begin{center} 
\psfrag{1}[][]{$1$} 
\psfrag{2}[][]{$2$} 
\psfrag{3}[][]{$3$} 
\psfrag{4}[][]{$4$} 
\psfrag{5}[][]{$5$} 
\psfrag{6}[][]{$6$} 
\psfrag{7}[][]{$7$} 
\psfrag{8}[][]{$8$} 
\psfrag{9}[][]{$9$} 
\psfrag{10}[][]{$10$} 
\includegraphics[width=4.1in]{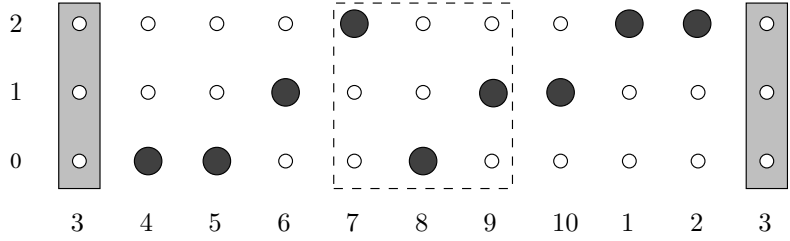}
\end{center} 
\caption[slope]{\label{fig:configuration_eg}
The graph of the non-decreasing configuration $\tau$ 
of type $(2,2)$ in $\mathcal{T}^+_{3,7}$ corresponding 
to the string $0^2 1b1^2 2^2$. 
The grey rectangles are used to remind the reader that 
$\tau$ has no value at $s=3$, while the dotted rectangle 
is to draw attention to the values of $\tau$ 
at $t=7$ : namely that $\tau(t+2)-2,\tau(t+1)-1,\tau(t)$ are non-zero.} 
\end{figure} 
\begin{remark}
Observe that the function
\begin{equation}
b_{t-1,t}^{0,0}(z)\;=\;
\sum_{\substack{\tau\in\mathcal{T}_{t-1,t}^+ \\ \tau \ \mbox{\tiny type} \ (0,0)}}
(-1)^{\sgn(\bar\upsilon_\tau)}c(\tau)
\end{equation}
is used in both recurrence relations
~\eqref{eq:kappa_recurrence_rel} and
~\eqref{eq:mu_recurrence_rel}.
Thus, it is implicitly taken in the statement of the above proposition that 
$b_{t-1,t}^{0,0}$ has the form
$z^{\ell-1}u_{t-1,t}^{0,0}$
and $u_{t-1,t}^{0,0}$ satisfies both recurrences.
\end{remark}
\begin{proof}
First, let us show equation~\eqref{eq:b_in_terms_of_beta}.
Observe that
\begin{equation}\label{eq:c_in_terms_of_gamma}
c(\tau)
\;=\;
\gamma(\tau)z^{\sum_k\tau(k)} \qquad
\mbox{where} \qquad
\gamma(\tau)
\;=\;
\prod_{1\leq k\leq\ell: k\neq s}\gamma_k^{\tau(k)} \ .
\end{equation}
By equation~\eqref{eq:def-type}, if $\tau$ is of type $(\kappa,\mu)$ then
\begin{equation}\label{eq:sum_of_tau}
\sum_{1\leq k\leq \ell : k\neq s}\tau(k)
\;=\;
\ell-1-\kappa+\mu \ .
\end{equation}
Thus,
$\tau$ being of type $(\kappa,0)$ implies that $\deg_z c(\tau)=\ell-\kappa-1$; while
$\tau$ being of type $(0,\mu)$ implies that $\deg_z c(\tau)=\ell-1+\mu$.
Hence equation~\eqref{eq:b_in_terms_of_beta} is satisfied.
We now prove the recurrence relations~\eqref{eq:kappa_recurrence_rel} and~\eqref{eq:mu_recurrence_rel} by induction.
First let us show~\eqref{eq:kappa_recurrence_rel}.
We start by considering the cases $s=t,t+1$.
For $s=t$, there is a unique 
$\tau\in\mathcal{T}_{s,t}^+$.
Namely,
\begin{equation}
\tau(t+1)\tau(t+2)\cdots\tau(t-2)\tau(t-1)\;=\;0^{\ell-1} \ .
\end{equation}
Therefore
\begin{equation}
\gamma(\tau)
\;=\;
\gamma_{t+1}^{\tau(t+1)}\cdots\gamma_{t-2}^{\tau(t-2)}\gamma_{t-1}^{\tau(t-1)}
\;=\;
\prod_{t+1\leq k\leq t-1} \gamma_{k}^0 \ .
\end{equation}
Also observe that 
$\bar\upsilon_\tau=\mathrm{id}$. 
Therefore $\sgn(\bar\upsilon_\tau)=0$.
Hence, 
we find that
\begin{equation}
\sum_{\substack{\tau\in\mathcal{T}_{t,t}^+\\ \tau \ \mbox{\tiny type} \  (\ell-1,0)}}
(-1)^{\sgn(\bar{\upsilon}_\tau)}\gamma(\tau)
\;=\;
(-1)^{\sgn(\bar{\upsilon}_\tau)}\gamma(\tau)
\;=\;
\prod_{t+1\leq k\leq t-1} \gamma_{k}^0 \ .
\end{equation}
Next, for $s=t+1$, there is a unique 
$\tau\in \mathcal{T}_{s,t}^+$ of type $(\ell-2,0)$. 
Namely, 
\begin{equation}
\tau(t+2)\tau(t+3)\cdots\tau(t-1)\tau(t)\;=\;0^{\ell-2}1 \ .
\end{equation}
From which we see that
\begin{equation}
\gamma(\tau)
\;=\;
\gamma_{t+2}^{\tau(t+2)}\gamma_{t+3}^{\tau(t+3)}\cdots \gamma_{t-1}^{\tau(t-1)}\gamma_{t}^{\tau(t)}
\;=\;
\left(\prod_{t+2\leq k\leq t-1}\gamma_{k}^0 \right)
\gamma_{t}^{1} \ .
\end{equation}
Also observe that 
$\bar\upsilon_\tau$ 
is a single transposition given in cycle notation by $(t\ t+1)$.
Therefore $\sgn(\bar\upsilon_\tau)=1$.
Thus, 
we find that
\begin{equation}
\sum_{\substack{\tau\in\mathcal{T}_{t+1,t}^+\\ \tau \ \mbox{\tiny type} \  (\ell-2,0)}}
(-1)^{\sgn(\bar{\upsilon}_\tau)}\gamma(\tau)
\;=\;
(-1)^{\sgn(\bar{\upsilon}_\tau)}\gamma(\tau)
\;=\;
-
\left(\prod_{t+2\leq k\leq t-1}\gamma_{k}^0\right) 
\gamma_{t}^{1}
\end{equation}
and the statement holds for $s=t+1$.
Now assume that the statement holds for $t\leq r< s$.
We wish to show the same holds for $r=s$.
Observe that we can decompose $\mathcal{T}_{s,t}^+(\kappa(s,t),0)$ further into
those $\tau$ whose central string terminates with $1$, 
and those whose central string terminates with $b$,
{\it i.e.}, let 
\begin{align}
\mathcal{T}_{s,t}^+\left(\kappa(s,t),0\right)^1
&=
\left\{
0^{\kappa}\varsigma1 : 
\varsigma(t)\cdots\varsigma(s-2)=1^{\lambda_1}b1^{\lambda_2}b\cdots 1^{\lambda_r}
\ \mbox{some} \ \lambda_1,\ldots
\right\}\\
\mathcal{T}_{s,t}^+\left(\kappa(s,t),0\right)^0
&=
\left\{
0^{\kappa}\varsigma b : 
\varsigma(t)\cdots\varsigma(s-3)=1^{\lambda_1}b1^{\lambda_2}b\cdots 1^{\lambda_r}
\ \mbox{some} \ \lambda_1,\ldots
\right\}
\end{align}
Also observe that
\begin{itemize}
\item
$\mathcal{T}_{s,t}^+\left(\kappa(s,t),0\right)^1$ 
is in one-to-one correspondence with 
$\mathcal{T}_{s-1,t}^+\left(\kappa(s,t)+1,0\right)$;
\item
$\mathcal{T}_{s,t}^+\left(\kappa(s,t),0\right)^0$ 
is in one-to-one correspondence with 
$\mathcal{T}_{s-2,t}^+\left(\kappa(s,t)+2,0\right)$.
\end{itemize}
Moreover, 
if 
$\tau\in\mathcal{T}_{s,t}^+\left(\kappa(s,t),0\right)^1$ 
corresponds to 
$\tau'\in\mathcal{T}_{s-1,t}^+\left(\kappa(s,t)+1,0\right)$,
then
\begin{align}
\gamma_{s}^{0}\cdot
\gamma(\tau)
&\;=\;
\gamma_{s}^{0}\cdot
\gamma_{s+1}^{\tau(s+1)}\cdots
\gamma_{t-1}^{\tau(t-1)}\cdot\gamma_{t}^{\tau(t)}\cdots 
\gamma_{s-1}^{\tau(s-1)}\\
&\;=\;
\gamma_{s}^{0}
\left(\prod_{s+1\leq k\leq t-1}\gamma_k^0\right)
\left(\prod_{t\leq k\leq s-2}\gamma_{k}^{\varsigma(k)}\right)
\gamma_{s-1}^{1}\\
&\;=\;
\left(\prod_{s\leq k\leq t-1}\gamma_k^0\right)
\left(\prod_{t\leq k\leq s-2}\gamma_{k}^{\varsigma(k)}\right)
\gamma_{s-1}^{1}
\ \;=\; \
\gamma_{s-1}^{1}\cdot\gamma(\tau') \ .
\end{align}
Also, the corresponding permutations 
$\bar{\upsilon}_\tau$ 
and 
$\bar{\upsilon}_{\tau'}$ 
differ by a single transposition interchanging $t$ and $s$.
(In cycle notation: 
$\bar{\upsilon}_{\tau}=(t \ s)\bar{\upsilon}_{\tau'}$.)
Since $(t\ s)$ can be expressed as the product of 
$2\card[t,s]-3$ 
adjacent transpositions 
$(t\ t+1)(t+1\ t+2)\cdots(s-2\ s-1)(s-1\  s)(s-2\ s-1)\cdots (t\ t+1)$ 
it follows that 
$\sgn(\bar\upsilon_\tau)$ 
and 
$\sgn(\bar\upsilon_{\tau'})$ have different parities.

Similarly,
if 
$\tau\in\mathcal{T}_{s,t}^+\left(\kappa(s,t),0\right)^0$ 
corresponds to 
$\tau'\in\mathcal{T}_{s-2,t}^+\left(\kappa(s,t)+2,0\right)$,
then
\begin{align}
\gamma_{s}^{0}\cdot
\gamma(\tau)
&\;=\;
\gamma_{s}^{0}\cdot
\gamma_{s+1}^{\tau(s+1)}\cdots
\gamma_{t-1}^{\tau(t-1)}\cdot\gamma_{t}^{\tau(t)}\cdots 
\gamma_{s-1}^{\tau(s-1)}\\
&\;=\;
\gamma_{s}^{0}
\left(\prod_{s+1\leq k\leq t-1}\gamma_k^0\right)
\left(\prod_{t\leq k\leq s-3}\gamma_{k}^{\varsigma(k)}\right)
\gamma_{s-2}^{2} \cdot \gamma_{s-1}^{0}\\
&\;=\;
\left(\prod_{s-1\leq k\leq t-1}\gamma_k^0\right)
\left(\prod_{t\leq k\leq s-3}\gamma_{k}^{\varsigma(k)}\right)
\gamma_{s-2}^{2}
\ \;=\; \
\gamma_{s-2}^{2}\cdot
\gamma(\tau')
\end{align}
and the corresponding permutations 
$\bar{\upsilon}_\tau$ and $\bar{\upsilon}_{\tau'}$ 
differ by a single transposition interchanging $t$ and $s$.
(In cycle notation: $\bar{\upsilon}_{\tau}=(t \ s)\bar{\upsilon}_{\tau'}$.)
Thus 
$\sgn(\bar\upsilon_\tau)$ 
and 
$\sgn(\bar\upsilon_{\tau'})$ 
again have different parities.

Consequently, setting $\kappa=\kappa(s,t)$ and observing that $s=t+\ell-\kappa-1$,
\begin{align}
\gamma_{s}^{0}\cdot u_{s,t}^{\kappa,0}
&\;=\;
\sum_{\tau\in\mathcal{T}_{s,t}^+\left(\kappa,0\right)}
(-1)^{\sgn(\bar{\upsilon}_\tau)}\gamma_{s}^{0}\cdot\gamma(\tau)\\
&\;=\;
\sum_{\tau\in\mathcal{T}_{s,t}^+\left(\kappa,0\right)^1}
(-1)^{\sgn(\bar{\upsilon}_\tau)}\gamma_{s}^{0}\cdot\gamma(\tau)
+\sum_{\tau\in\mathcal{T}_{s,t}^+\left(\kappa,0\right)^0}
(-1)^{\sgn(\bar{\upsilon}_\tau)}\gamma_{s}^{0}\gamma(\tau)\\
&\;=\;
\sum_{\tau'\in\mathcal{T}_{s-1,t}^+\left(\kappa+1,0\right)}
(-1)^{\sgn(\bar{\upsilon}_{\tau'})+1}\gamma_{s-1}^{1}\cdot\gamma(\tau')\notag\\
&\qquad \qquad +\sum_{\tau\in\mathcal{T}_{s-2,t}^+\left(\kappa+2,0\right)}
(-1)^{\sgn(\bar{\upsilon}_{\tau'})+1}\gamma_{s-2}^{2}\cdot\gamma(\tau')\\
&\;=\;
-\gamma_{s-1}^{1}u_{s-1,t}^{\kappa+1,0}
-\gamma_{s-2}^{2}u_{s-2,t}^{\kappa+2,0} \ .
\end{align}
Now we consider the second recurrence relation~\eqref{eq:mu_recurrence_rel}
For $s=t-2$ there exists a unique $\tau\in\mathcal{T}_{s,t}^+$.
Namely
\begin{equation}
\tau(t-1)\tau(t)\cdots\tau(t-4)\tau(t-3)
\;=\;
2^{\ell-1} \ .
\end{equation}
Therefore
\begin{equation}
\gamma(\tau)
\;=\;
\gamma_{t-1}^{\tau(t-1)}\gamma_{t}^{\tau(t)}\cdots\gamma_{t-4}^{\tau(t-4)}\gamma_{t-3}^{\tau(t-3)}
\;=\;
\prod_{t-1\leq k\leq t-3} \gamma_{k}^2 \ .
\end{equation}
Now observe that, since 
$\bar\upsilon_\tau$ 
can be written as $\nu^2$ 
where 
$\nu(k)=k+1\mod\ell$, 
it follows that 
$\sgn(\bar\upsilon_\tau)=2\sgn(\nu)=2\ell-2$.
Hence
\begin{equation}
\sum_{\substack{\tau\in\mathcal{T}_{t-2,t}^+\\ \tau \ \mbox{\tiny type} \  (0,\ell-1)}}
(-1)^{\sgn(\bar{\upsilon}_\tau)}\gamma(\tau)
\;=\;
(-1)^{\sgn(\bar{\upsilon}_\tau)}\gamma(\tau)
\;=\;
\prod_{t-1\leq k\leq t-3}\gamma_k^2 \ .
\end{equation}
Now consider the case 
$s=t-3$.
Then there is a unique 
$\tau\in\mathcal{T}_{s,t}^+$ of type $(0,\ell-2)$.
Namely
\begin{equation}
\tau(t-2)\tau(t-1)\cdots\tau(t-5)\tau(t-4)
\;=\;
12^{\ell-2} \ .
\end{equation} 
From which, we find that
\begin{equation}
\gamma(\tau)
\;=\;
\gamma_{t-2}^{\tau(t-2)}\gamma_{t-1}^{\tau(t-1)}\cdots\gamma_{t-5}^{\tau(t-5)}\gamma_{t-4}^{\tau(t-4)}
\;=\;
\gamma_{t-2}^1\left(\prod_{t-1\leq k\leq t-4}\gamma_k^2\right) \ .
\end{equation}
Next, observe that 
$\bar\upsilon_\tau$ 
can be written, in cycle notation, as 
$( t \ t-1 )\nu^2$ 
and thus 
$\sgn(\bar\upsilon_\tau)=2\ell-1$.
It follows that
\begin{equation}
\sum_{\substack{\tau\in\mathcal{T}_{t-3,t}^+\\ \tau \ \mbox{\tiny type} \  (0,\ell-2)}}
(-1)^{\sgn(\bar{\upsilon}_\tau)}\gamma(\tau)
\;=\;
(-1)^{\sgn(\bar{\upsilon}_\tau)}\gamma(\tau)
\;=\;
-
\gamma_{t-2}^1\left(\prod_{t-1\leq k\leq t-4}\gamma_k^2\right) \ .
\end{equation}
Now assume that the statement holds for $s+1\leq r< t$.
We decompose $\mathcal{T}_{s,t}^+\left(0,\mu\right)$ into
\begin{align}
\mathcal{T}_{s,1}^+\left(0,\mu\right)^1
&\;=\;
\left\{1\varsigma 2^\mu : 
\varsigma(s+2)\cdots\varsigma(t-2)=1^{\lambda_1}b1^{\lambda_r}\cdots 1^{\lambda_r}
\ \mbox{some} \ \lambda_1,\ldots
\right\}\\
\mathcal{T}_{s,t}^+\left(0,\mu\right)^0
&\;=\;
\left\{b\varsigma 2^\mu : 
\varsigma(s+3)\cdots\varsigma(t-2)=1^{\lambda_1}b1^{\lambda_r}\cdots 1^{\lambda_r}
\ \mbox{some} \ \lambda_1,\ldots
\right\}
\end{align}
As before, we observe that
\begin{itemize}
\item
$\mathcal{T}_{s,t}^+\left(0,\mu(s,t)\right)^1$ 
is in one-to-one correspondence with 
$\mathcal{T}_{s+1,t}^+\left(0,\mu(s,t)+1\right)$;
\item
$\mathcal{T}_{s,t}^+\left(0,\mu(s,t)\right)^0$ is in one-to-one correspondence with $\mathcal{T}_{s+2,t}^+\left(0,\mu(s,t)+2\right)$.
\end{itemize}
Moreover, 
if 
$\tau\in\mathcal{T}_{s,t}^+\left(0,\mu(s,t)\right)^1$ 
corresponds to 
$\tau'\in\mathcal{T}_{s+1,t}^+\left(0,\mu(s,t)+1\right)$
then
\begin{align}
\gamma_{s}^{2}\cdot
\gamma(\tau)
&\;=\;
\gamma_{s}^{2}\cdot
\gamma_{s+1}^{\tau(s+1)}\cdots\gamma_{t-1}^{\tau(t-1)}\cdot\gamma_{t}^{\tau(t)}\cdots \gamma_{s-1}^{\tau(s-1)}\\
&\;=\;
\gamma_{s}^{2}\cdot
\gamma_{s+1}^1
\left(\prod_{s+2\leq k\leq t-2}\gamma_k^{\varsigma(k)}\right)
\left(\prod_{t-1\leq k\leq s-1}\gamma_k^{2}\right)\\
&\;=\;
\gamma_{s+1}^{1}
\left(\prod_{s+2\leq k\leq t-2}\gamma_k^{\varsigma(k)}\right)
\left(\prod_{t-1\leq k\leq s}\gamma_k^{2}\right)
\ \;=\; \
\gamma_{s+1}^{1}\cdot
\gamma(\tau')
\end{align}
and the corresponding permutations $\bar\upsilon_\tau$ and $\bar\upsilon_{\tau'}$ differ by a single transposition interchanging $s$ and $s+1$. 
(In cycle notation: $\bar\upsilon_\tau=\bar\upsilon_{\tau'} ( \ s \ s+1 \ )$.)
Thus $\sgn(\bar\upsilon_\tau)=\sgn(\bar\upsilon_{\tau'})+1$.

Similarly, 
if $\tau\in\mathcal{T}_{s,t}^+\left(0,\mu(s,t)\right)^0$ corresponds to $\tau'\in\mathcal{T}_{s+2,t}^+\left(0,\mu(s,t)+2\right)$
then
\begin{align}
\gamma_{s}^{2}\cdot
\gamma(\tau)
&\;=\;
\gamma_{s}^{2}\cdot
\gamma_{s+1}^{\tau(s+1)}\cdots\gamma_{t-1}^{\tau(t-1)}\cdot\gamma_{t}^{\tau(t)}\cdots \gamma_{s-1}^{\tau(s-1)}\\
&\;=\;
\gamma_{s}^{2}\cdot
\gamma_{s+1}^2\cdot 
\gamma_{s+2}^{0}
\left(\prod_{s+3\leq k\leq t-2}\gamma_{k}^{\varsigma(k)}\right)
\left(\prod_{t-1\leq k\leq s-1}\gamma_{k}^{2}\right)\\
&\;=\;
\gamma_{s+2}^{0}
\left(\prod_{s+3\leq k\leq t-2}\gamma_{k}^{\varsigma(k)}\right)
\left(\prod_{t-1\leq k\leq s+1}\gamma_{k}^{2}\right)
\ \;=\; \
\gamma_{s+2}^{0}\cdot\gamma(\tau')
\end{align}
and the corresponding permutations 
$\bar\upsilon_\tau$ and $\bar\upsilon_{\tau'}$ 
differ by a single transposition interchanging $s$ and $s+2$.
(In cycle notation: $\bar\upsilon_\tau=\bar\upsilon_{\tau'}( \ s \ s+2 \ )$.)
This transposition can be expressed as the product of three adjacent transpositions.
Thus $\sgn(\bar\upsilon_\tau)=\sgn(\bar\upsilon_{\tau'})+1$.
Consequently, setting $\mu=\mu(s,t)$ and observing that $s=t-\ell+\mu-1$,
\begin{align}
\gamma_{s}^{2}\cdot 
u_{s,t}^{0,\mu}
&\;=\;
\sum_{\tau\in\mathcal{T}_{s,t}^+\left(0,\mu\right)}(-1)^{\sgn(\bar{\upsilon}_\tau)}\gamma_{s}^{2}\cdot\gamma(\tau)\\
&\;=\;
\sum_{\tau\in\mathcal{T}_{s,t}^+\left(0,\mu\right)^1}(-1)^{\sgn(\bar{\upsilon}_\tau)}\gamma_{s}^{2}\cdot\gamma(\tau)
+\sum_{\tau\in\mathcal{T}_{s,t}^+\left(0,\mu\right)^0}(-1)^{\sgn(\bar{\upsilon}_\tau)}\gamma_{s}^{2}\cdot\gamma(\tau)\\
&\;=\;
\sum_{\tau'\in\mathcal{T}_{s+1,t}^+\left(0,\mu+1\right)}(-1)^{\sgn(\bar{\upsilon}_{\tau'})+1}\gamma_{s+1}^{1}\cdot\gamma(\tau')\notag\\
&\qquad\qquad+\sum_{\tau'\in\mathcal{T}_{s+2,t}^+\left(0,\mu+2\right)}(-1)^{\sgn(\bar{\upsilon}_{\tau'})+1}\gamma_{s+2}^{0}\cdot\gamma(\tau')\\
&\;=\;
-\gamma_{s+1}^1 u_{s+1,t}^{0,\mu+1}
-\gamma_{s+2}^{0}u_{s+2,t}^{0,\mu+2} \ .
\end{align}
\end{proof}
Using the preceding analysis we can relate the coefficients 
$u_{s,t}^{\kappa,0}$ and $u_{s,t}^{0,\mu}$ 
to the coefficients of the quadratic polynomial $v$ from Theorem~\ref{thm:det-general}. 
\begin{proposition}\label{prop:alpha_in_terms_of_beta}
Let $v$ denote the quadratic polynomial 
from Theorem~\ref{thm:det-general}.
Then, for any $s\in\mathbb{Z}_\ell$, the degree 1 coefficient $v_1$ of $v$ satisfies the following relation:
\begin{equation}\label{eq:alpha_in_terms_of_beta}
v_1\;=\;\gamma_s^0u_{s,s}^{0,1}+\gamma_s^1 u_{s,s+1}^{0,0}+\gamma_s^2 u_{s,s+2}^{1,0} \ .
\end{equation}
\end{proposition}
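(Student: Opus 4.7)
The plan is to recognise $v_1$, as computed in the proof of Theorem~\ref{thm:det-general}, as a weighted sum over $\mathcal{T}^+ \setminus \{0^\ell, 2^\ell\}$, and then to partition that sum according to the value $p = \tau(s) \in \{0,1,2\}$. I would match each part, via the restriction map $\tau \mapsto \tau' := \tau|_{\mathbb{Z}_\ell \setminus \{s\}}$, with the configurations appearing in the sum that defines $u_{s,s+p}^{(\cdot,\cdot)}$: type $(0,1)$ for $p=0$, type $(0,0)$ for $p=1$, and type $(1,0)$ for $p=2$.

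The first step will be to verify that this restriction map is a bijection between $\{\tau\in\mathcal{T}^+\setminus\{0^\ell,2^\ell\}:\tau(s)=p\}$ and the correct type-restricted subset of $\mathcal{T}_{s,s+p}^+$. For this I would appeal to Proposition~\ref{prop:sums_of_non-decr_tau} to classify cyclic $\tau$ (either all $1$s and $b$s, or containing a single distinguished $0$ and $2$ joined by $1$s and $b$s), and then check, using the non-decreasing condition at positions $s-2,s-1,s,s+1$, that fixing $\tau(s)=p$ forces $\tau'(s+1)\cdots\tau'(s-1)$ to have exactly the required prefix/suffix structure and to satisfy the side constraints $\tau'(t-2)\ne 2$, $\tau'(t-1)\ne 1$, $\tau'(t)\ne 0$ (with $t = s+p$) that define $\mathcal{T}_{s,t}^+$. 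The converse direction is by extension: given such a $\tau'$, setting $\tau(s):=p$ produces an element of $\mathcal{T}^+$, as can be verified by checking non-decreasingness locally at the three relevant positions.

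The second step is weight and sign bookkeeping. By construction $\gamma(\tau) = \gamma_s^p\,\gamma(\tau')$ on the nose. For the signatures, note that $\upsilon_\tau(s) = s+p = t$ coincides with $\bar\upsilon_{\tau'}(s) = t$ while $\upsilon_\tau(k) = \bar\upsilon_{\tau'}(k)$ for all $k\ne s$; hence $\upsilon_\tau = \bar\upsilon_{\tau'}$ as permutations of $\mathbb{Z}_\ell$, and their signatures agree. Collecting terms and invoking Proposition~\ref{prop:b_and_beta} to identify the inner sums with $u_{s,s}^{0,1}$, $u_{s,s+1}^{0,0}$, and $u_{s,s+2}^{1,0}$ respectively then yields the claimed identity.

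The main obstacle I anticipate is the case analysis in step one. One must carefully track whether the position $s$ sits inside a $b = 20$ block or at the boundary of a $1^\lambda$ block, and (for $p=0$ or $2$) whether it coincides with the unique distinguished $0$ or $2$ of a cyclic word of the second type in Proposition~\ref{prop:sums_of_non-decr_tau}, and then show that in every sub-case the restricted string $\tau'(s+1)\cdots\tau'(s-1)$ lies in exactly one of the three permitted types. Once this combinatorial bijection is pinned down, the weight and sign verifications, and the passage to the $u_{s,t}^{(\cdot,\cdot)}$ via Proposition~\ref{prop:b_and_beta}, are essentially bookkeeping.
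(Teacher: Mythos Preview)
Your proposal is correct and follows essentially the same route as the paper: partition $\{\bar\tau\in\mathcal{T}^+:\bar\tau\not\equiv 0,2\}$ by the value $\bar\tau(s)=p$, identify each piece with $\mathcal{T}^+_{s,s+p}$ of type $(0,1)$, $(0,0)$, $(1,0)$ respectively via restriction, observe $\upsilon_{\bar\tau}=\bar\upsilon_{\tau'}$ and $\gamma(\bar\tau)=\gamma_s^p\gamma(\tau')$, and conclude. The paper dispatches the bijection step more briskly than you anticipate (it proves only the $p=1$ case explicitly and declares the others similar), so the case analysis you flag as an obstacle is lighter than you expect.
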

\begin{proof}
First let us comment on notation. 
Fix $s\in\mathbb{Z}_\ell$.
Below we will denote 
elements of $\mathcal{T}^+$ by $\bar\tau$, 
for $t=s,s+1,s+2$, 
and elements of $\mathcal{T}^+_{s,t}$
will be denoted by $\tau$.
Observe that
\begin{equation}
\left\{\bar\tau\in \mathcal{T}^+ : \tau\not\equiv 0,2\right\}
\;=\;
\biguplus_{p=0,1,2}
\left\{\bar\tau\in\mathcal{T}^+ : \tau\not\equiv 0,2,\tau(s)=p\right\} \ .
\end{equation}
\begin{claim}
We have the following:

\vspace{5pt}

$\left\{\bar\tau\in\mathcal{T}^+ : \bar\tau\not\equiv 0,2,\bar\tau(s)=0\right\}$
is in one-to-one correspondence with
$\mathcal{T}_{s,s}^+\left(0,1\right)$;

\vspace{5pt}

$\left\{\bar\tau\in\mathcal{T}^+ : \bar\tau\not\equiv 0,2,\bar\tau(s)=1\right\}$ 
is in one-to-one correspondence with
$\mathcal{T}^+_{s,s+1}\left(0,0\right)$;

\vspace{5pt}

$\left\{\bar\tau\in\mathcal{T}^+ : \bar\tau\not\equiv 0,2,\bar\tau(s)=2 \right\}$
is in one-to-one correspondence with
$\mathcal{T}_{s,s+2}^+\left(1,0\right)$.
\end{claim}

\vspace{5pt}

\noindent
{\it Proof of Claim:}
We prove the middle correspondence.
The left-hand side can be identified with the set of strings satisfying the properties 
$\tau(s-1)\neq 2$ and $\tau(s+1)\neq 0$, 
{\it i.e.}, those strings of the form 
\begin{equation}
\tau(s+1)\cdots\tau(s-1)
\;=\;
1^{\lambda_1}b1^{\lambda_2}b\cdots 1^{\lambda_{r-1}}b1^{\lambda_r} \ .
\end{equation}
But strings of this form correspond to elements of $\mathcal{T}^+_{s,s+1}\left(0,0\right)$.
The other correspondences follow similarly.
/\!/

\vspace{5pt}

Under each of these correspondences we also trivially have that 
$\upsilon_{\bar\tau}=\bar\upsilon_\tau$.
In particular 
$\sgn(\upsilon_{\bar\tau})=\sgn(\bar\upsilon_\tau)$.
We also have, for $p=0,1$ or $2$, that if $\tau\in\mathcal{T}^+$ satisfies $\tau(s)=p$ then 
\begin{equation}
\gamma(\bar\tau)\;=\;\gamma_s^p \gamma(\tau) \ .
\end{equation}
Therefore, from the expression from Theorem~\ref{thm:det-general},
\begin{align}
v_1
&\;=\;
\sum_{\bar\tau\in\mathcal{T}^+:\bar\tau\not\equiv 0,2}(-1)^{\sgn(\upsilon_{\bar\tau})}\gamma(\bar\tau)\\
&\;=\;
\sum_{\tau\in\mathcal{T}^+_{s,s}\left(0,1\right)}
(-1)^{\sgn(\bar\upsilon_\tau)}\gamma_s^{0}\gamma(\tau)
+\sum_{\tau\in\mathcal{T}^+_{s,s+1}\left(0,0\right)}
(-1)^{\sgn(\bar\upsilon_\tau)}\gamma_s^{1}\gamma(\tau)\notag\\
&\qquad\qquad\qquad+\sum_{\tau\in\mathcal{T}^+_{s,s+2}\left(1,0\right)}
(-1)^{\sgn(\bar\upsilon_\tau)}\gamma_s^{2}\gamma(\tau)\\
&\;=\;
\gamma_s^0u_{s,s}^{0,1}+\gamma_s^1 u_{s,s+1}^{0,0}+\gamma_s^2 u_{s,s+2}^{1,0} \ .
\end{align}
\end{proof}
We can also now prove the main result in this section.
\begin{proof}[Proof of Theorem~\ref{thm:inhomog_eq-sol}]
Let
\begin{equation}
u_s(z)\;=\;
\sum_{\tau\in\mathcal{T}_{s,t}^+}
(-1)^{\sgn(\bar\upsilon_\tau)}
c(\tau) \ .
\end{equation}
Then by equations~\eqref{eq:Cramer+det_multilinear} 
and~\eqref{eq:det_monomial-inhomog}, we find that
\begin{align}
&\det
\left[\begin{array}{ccccccc}
\N_1&\cdots&\N_{s-1}&\C_t&\N_{s+1}&\cdots&\N_\ell
\end{array}\right]\\
&\;=\;
\sum_{\tau\in\mathcal{T}_s}
\det
\left[\begin{array}{ccccccc}
\C_{1;\tau(1)}&\cdots&\C_{s-1;\tau(s-1)}&\C_t&\C_{s+1;\tau(s+1)}&\cdots&\C_{\ell;\tau(\ell)}
\end{array}\right]\notag\\
&\;=\;
\sum_{\tau\in\mathcal{T}_{s,t}^+}
(-1)^{\sgn(\bar\upsilon_\tau)}
c(\tau)
\det
\left[\begin{array}{cccc}
\C_{1}&\C_{2}&\cdots&\C_{\ell}
\end{array}\right]\\
&\;=\;
\det\C \times u_s(z) \ . \label{eq:Cramer_numerator}
\end{align}
Theorem~\ref{thm:det-general} implies that
\begin{align}
\det \N(z)\;=\;\det\C\times v(z^\ell) \ . \label{eq:Cramer_denominator}
\end{align}
Applying Cramer's rule to equation~\eqref{eq:inhomog}, 
together with the equalities~\eqref{eq:Cramer_numerator} 
and~\eqref{eq:Cramer_denominator},
a solution $\E(z)$ exists provided that $\det \C$ is non-zero, 
and in this case the $s$th entry is given by
\begin{align}
E_s(z)
&\;=\;
\frac{u_s(z)}{v(z^\ell)} \ .
\end{align}
By Proposition~\ref{prop:b_and_beta},
\begin{equation}
u_s(z)\;=\;
\left\{
\begin{array}{ll}
z^{(\ell-1)-\kappa}u_{s,t}^{\kappa,0}+z^{(\ell-1)+\mu}u_{s,t}^{0,\mu}
& s\neq t-1\\
z^{\ell-1}u_{t-1,t}^{0,0}
& s=t-1
\end{array}
\right.
\end{equation}
where $u_{s,t}^{\kappa,0}$ and $u_{s,t}^{0,\mu}$ satisfy 
properties~\eqref{eq:kappa_recurrence_rel} and~\eqref{eq:mu_recurrence_rel}. 
\end{proof}


\begin{thebibliography}{999}
\bibliographystyle{plain}




\bibitem{HardyWrightBook}
\newblock G.H.~Hardy and E.M.~Wright.
\newblock {\it An introduction to the theory of number (6th ed.)\/}.
\newblock Oxford University Press, 2008.

\bibitem{JagerLiardet1988}
\newblock H.~Jager and P.~Liardet.
\newblock {\it Distributions arithm{\'e}tiques des d{\'e}nominateurs de convergents de fractions continues\/}.
\newblock Indag. Math., \textbf{50}, (1988), 181--197.
 
\bibitem{KhinchinBook}
\newblock A. Ya.~Khinchin.
\newblock {\it Continued Fractions\/}, (transl. H.~Eagle).
\newblock Dover Publications, 1997.

\bibitem{Lehmer1939}
\newblock D. H.~Lehmer.
\newblock {\it Note on an absolute constant of Khintchine\/}.
\newblock Amer. Math. Monthly, vol. 46, no. 3, (1939), 148--152.

\bibitem{Levy1929}
\newblock P.~L{\'e}vy.
\newblock {\it Sur les lois de probabilit{\'e} dont d{\'e}pendent les quotients complets et incomplets d'un fraction continue\/}.
\newblock Bull. Soc. Math. de France, tome \textbf{57} (1929), 178--194.

\bibitem{LevyBook}
\newblock P. L{\'e}vy.
\newblock {\it Th{\'e}orie de l'addition des variables al{\'e}atoire\/}.
\newblock Gauthier-Villars Paris, 1937.

\bibitem{ShallitLenstra2001}
\newblock J. O.~Shallit and H. W.~Lenstra, Jr.
\newblock {\it Continued Fractions and Linear Recurrences\/}.
\newblock Mathematics of Computation, \textbf{61} (1993), 351--354.
\end{thebibliography}
\end{document}